
\documentclass[10pt]{amsart}

\usepackage{amsmath}
\usepackage{array,subfigure}
\usepackage{graphicx}

\newtheorem{thm}{Theorem}[section]
\newtheorem{prop}[thm]{Proposition}
\newtheorem{cor}[thm]{Corollary}
\newtheorem{lem}[thm]{Lemma}
\newtheorem{defn}[thm]{Definition}


\newenvironment{remark}{\refstepcounter{thm} \medskip \noindent {\bf  Remark \arabic{section}.\arabic{thm}}}{\hfill\mbox{}\bigskip}

\newcounter{num}

\newenvironment{thmlist}{\begin{list}{(\roman{num})}{\usecounter{num}\setlength{\leftmargin}{25pt}
\setlength{\itemindent}{0pt}\setlength{\labelwidth}{20pt}\setlength{\labelsep}{5pt}\setlength{\itemsep}{0in}}}{\end{list}}

\newcommand{\C}{\mathbb{C}}

\newcommand{\R}{\mathbb{R}}
\newcommand{\Z}{\mathbb{Z}}

\newcommand{\cps}{\mathbb{C}P}

\newcommand{\ol}[1]{\bar{#1}}

\newcommand{\AC}{\operatorname{AC}}
\newcommand{\Card}{\operatorname{Card}}

\newcommand{\contr}{\,\lrcorner\,}

\newcommand{\Hom}{\operatorname{Hom}}
\newcommand{\im}{\operatorname{Im}}

\newcommand{\Ric}{\operatorname{Ric}}

\newcommand{\Vol}{\operatorname{Vol}}
\newcommand{\inj}{\operatorname{inj}}

\newcommand{\Spec}{\operatorname{Spec}}
\newcommand{\SF}{\operatorname{SF}}

\newcommand{\inter}{\operatorname{Int}}

\title{Regularity of asymptotically conical Ricci-flat K\"{a}hler metrics}
\author{Craig van Coevering}
\address{Department of Mathematics, Massachusetts Institute of Technology, 77 Massachusetts Avenue, Cambridge, MA 02139-4307}
\email{craig@math.mit.edu}
\date{December 20, 2009}
\keywords{Calabi-Yau manifold, Sasaki manifold, Einstein metric, Ricci-flat manifold}
\subjclass{Primary 53C25, Secondary 53C55, 14M25 }

\begin{document}

\begin{abstract}
Using methods of A. Grigor'yan and L. Saloff-Coste we prove that on a manifold with a conical end the heat kernel has
a Gaussian bound.   This result is applied to asymptotically conical K\"{a}hler manifolds.
It is a result of the author and R. Goto that a crepant resolution $\pi:Y\rightarrow X$ of a Ricci-flat K\"{a}hler cone $X$ admits a Ricci-flat K\"{a}hler metric asymptotic to the cone metric in every K\"{a}hler class.   We prove the sharp rate of convergence of the metric to the cone metric.  For compact K\"{a}hler classes this is the same as for the Ricci-flat ALE metrics of P. Kronheimer and D. Joyce.
\end{abstract}

\maketitle


\section{Introduction}

This article considers Riemannian manifolds with a conical end.  A metric cone is a manifold $(C(S),g)$
with $C(S)=\R_{>0} \times S$ and $g=dr^2 +r^2 g_S$.  We consider manifolds with an end for which the metric is approximated by a cone
metric.  This can be considered a generalization of ALE manifolds.  But in this case the compact manifold $(S,g_S)$ is
arbitrary.  We will also consider K\"{a}hler manifolds with an end approximated by a K\"{a}hler cone. 
If $(C(S),g)$ is K\"{a}hler, then $(S,g_S)$ is a Sasaki manifold.  So in this case $(S,g_S)$ is far from arbitrary,
and such manifolds have been studied extensively~\cite{BG3}.  In particular we will consider Ricci-flat asymptotically
conical K\"{a}hler manifolds.    

Ricci-flat K\"{a}hler manifolds with an asymptotically conical end, and in particular resolutions of a Ricci-flat K\"{a}hler cone
$(C(S),g)$, have been of particular interest recently due to their relevance to the AdS/CFT correspondence.   See~\cite{MS2} 
for the construction of many explicit examples which are resolutions of Ricci-flat K\"{a}hler cones.  See also~\cite{MS3,MS4} 
for more on the relevance of these manifolds to AdS/CFT.

Asymptotically conical Ricci-flat K\"{a}hler manifolds have been extensively studied by solving the Monge-Amp\'{e}re equation
in various cases.  There are the existence results of G. Tian and S.-T. Yau~\cite{TY2}, and independently S. Bando and R. Kobayashi~\cite{BK2},
on Ricci-flat K\"{a}hler metrics on quasi-projective manifolds $X\setminus D$.  And there are the results of D. Joyce on the 
existence of Ricci-flat ALE metrics~\cite{Joy1}.  The author~\cite{vC3} and R. Goto~\cite{Got} have proved existence results
on resolutions of Ricci-flat K\"{a}hler cones.  This article shows what solutions to the Monge-Amp\'{e}re equation behave
essentially as in the ALE case due to D. Joyce.  In particular, we have the following for resolutions of cones.
\begin{thm}\label{thm:main}
Let $\pi: \hat{X}\rightarrow X=C(S)\cup\{o\}$ be a crepant resolution of a Ricci-flat K\"{a}hler cone $(X,\ol{g})$ of complex
dimension $m$.  Then in every K\"{a}hler class in $H_c^2(\hat{X},\R)\subset H^2(\hat{X},\R)$ there is a unique Ricci-flat
K\"{a}hler metric $g$ asymptotic to $\ol{g}$ as follows.  There exists an $R>0$ such that for each $k\geq 0$
\begin{equation}\label{eq:conv-cpt}
|\nabla^k(\pi_* g-\ol{g})| =O(r^{-2m-k})\quad\text{on }\{x\in C(S): r(x)>R\},
\end{equation}
where $\nabla$ and the point-wise norm are with respect to $\ol{g}$.

Furthermore, in every K\"{a}hler class in $H^2(\hat{X},\R)\setminus H_c^2(\hat{X},\R)$ there is a Ricci-flat metric $g$
asymptotic to $\ol{g}$.  In this case there exists an $R>0$ such that for each $k\geq 0$
\begin{equation}\label{eq:conv-noncpt}
|\nabla^k(\pi_* g-\ol{g})| =O(r^{-2-k})\quad\text{on }\{x\in C(S): r(x)>R\}.
\end{equation}
\end{thm}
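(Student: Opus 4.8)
The plan is to reduce the statement to a sharp decay estimate for a K\"ahler potential and then to combine the homogeneity of the complex Monge-Amp\`ere equation with the Gaussian heat kernel bound established above. I first treat the compactly supported case. Since the class lies in $H_c^2(\hat X,\R)$, the form $\omega-\ol\omega$ is exact on the conical end, so on $\{r>R\}$ one may write $\omega=\ol\omega+i\partial\bar\partial\phi$ for a single function $\phi$ normalized so that $\phi\to 0$ at infinity. Because $\pi$ is crepant, the canonical bundles satisfy $K_{\hat X}=\pi^*K_X$ and the holomorphic volume form of the cone extends across the exceptional set; as both $\omega$ and $\ol\omega$ are Ricci-flat, their top powers agree with the same multiple of $\Omega\wedge\ol\Omega$, so $(\ol\omega+i\partial\bar\partial\phi)^m=\ol\omega^m$ on the end. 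Expanding the determinant gives $\Delta_{\ol g}\phi=-Q(\phi)$, where $Q$ collects the elementary symmetric functions $\sigma_2,\dots,\sigma_m$ of the complex Hessian and is therefore at least quadratic in $i\partial\bar\partial\phi$.

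Next I set up the linear model. Separating variables $\phi=r^{\alpha}Y$ with $-\Delta_S Y=\mu Y$ turns $\Delta_{\ol g}\phi=0$ into the indicial equation $\alpha^2+(2m-2)\alpha-\mu=0$. The only decaying solution with $\mu=0$ is $\alpha=-(2m-2)$, and every mode with $\mu>0$ decays strictly faster; thus there is a gap in the decaying indicial roots, the slowest being $r^{-(2m-2)}$. This is exactly the rate predicted for $\phi$, hence $r^{-2m}$ for $i\partial\bar\partial\phi$. The purpose of the Gaussian bound $p_t(x,y)\le C\,\Vol(B(x,\sqrt t))^{-1}\exp(-\dist(x,y)^2/Ct)$ proved above is to make this prediction rigorous on the noncompact end: together with the conical volume growth $\Vol(B_r)\sim r^{2m}$ it yields the Sobolev and parabolic mean-value inequalities needed to bound solutions of $\Delta_{\ol g}u=f$ in terms of the decay of $f$, and it produces a Green's function decaying like $\dist(x,y)^{-(2m-2)}$.

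The heart of the argument is then a nonlinear bootstrap. Starting from whatever polynomial decay the existence results \cite{vC3,Got} provide, I feed the current bound on $\phi$ into $-Q(\phi)$; since $Q$ is quadratic in the Hessian, the equation shows $\Delta_{\ol g}\phi$ decays roughly twice as fast as one would guess from $\phi$ alone, so convolving against the Green's function and retaining no homogeneous mode slower than $r^{-(2m-2)}$ improves the exponent. Iterating, the improvement $\gamma\mapsto 2\gamma+2$ drives the rate up to the critical exponent, at which point the monopole mode $r^{-(2m-2)}$ takes over and the iteration closes on the sharp value. Interior Schauder estimates, rescaled on the dyadic annuli $\{R\le r\le 2R\}$ via the dilation symmetry of the cone, upgrade this to the derivative bounds $|\nabla^k(\pi_*g-\ol g)|=O(r^{-2m-k})$ of \eqref{eq:conv-cpt}.

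For a class in $H^2(\hat X,\R)\setminus H_c^2(\hat X,\R)$ the only structural change is that $\omega-\ol\omega$ is no longer exact on the end, so there is no globally decaying potential; instead the leading term of $\phi$ is homogeneous of degree zero with nontrivial dependence on $S$, reflecting the nonzero restriction of the class, and $i\partial\bar\partial$ of it contributes at order $r^{-2}$. Equivalently one works directly with the tensor $h=\pi_*g-\ol g$ and the linearized Ricci (Lichnerowicz) operator, whose first indicial root compatible with a nonzero asymptotic transverse K\"ahler class on $S$ sits at the homogeneity giving $h=O(r^{-2})$; the same heat-kernel estimates and bootstrap then yield \eqref{eq:conv-noncpt}. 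The main obstacle throughout is to verify that the quadratic error $Q$ decays strictly faster than the linear leading term, so that the iteration terminates at the \emph{sharp} exponent rather than merely some polynomial rate, and it is precisely here that the Gaussian heat kernel bound is indispensable, since the noncompactness of the end would otherwise preclude the global elliptic control that the bootstrap requires.
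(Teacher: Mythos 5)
Your route is genuinely different from the paper's. You take the Ricci-flat metric already produced by the existence results, write it on the conical end as $\omega=\ol{\omega}+dd^c\phi$, observe that Ricci-flatness forces $(\ol{\omega}+dd^c\phi)^m=\ol{\omega}^m$ there, and improve the a priori decay $O(r^{-2m+\delta})$ by a bootstrap: the quadratic error $Q(\phi)$ decays roughly twice as fast as $dd^c\phi$, and inverting the Laplacian returns the monopole mode $r^{2-2m}$ plus a faster remainder, which is the sharp rate. The paper instead works forward: Proposition~\ref{prop:Kahler-form} produces, in each compactly supported class, an initial K\"ahler metric that is \emph{exactly} the cone metric outside a compact set, so the Ricci potential $f$ is compactly supported; the sharp asymptotics are then built into the Calabi theorem itself (Theorem~\ref{thm:Calabi}(ii), proved by the continuity method), whose solution is exhibited as $\phi=A\rho^{2-2m}+\psi$ with $\psi\in C^{k+2,\alpha}_{\beta+2}$, $\beta<-2m$. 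Both arguments rest on the same linear fact, Theorem~\ref{thm:lap-weight}(ii) --- the isolated $\rho^{2-n}$ indicial mode extracted from the Green's function bounds (\ref{eq:Green-bound})--(\ref{eq:Green-bound-dif}) --- so your identification of the heat-kernel estimate as the crucial input is accurate. What the paper's route buys in addition is the explicit cohomological formula (\ref{eq:A-cohom}) for $A$ and a self-contained existence proof; what yours buys is independence from the particular construction of the metric.

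There are, however, concrete gaps. First, the theorem asserts \emph{uniqueness} of the Ricci-flat metric in each compactly supported class with the stated asymptotics, and your proposal never addresses it; the paper derives it from the weighted $\partial\ol{\partial}$-lemma (Proposition~\ref{prop:dd-bar}) together with the uniqueness clause of Theorem~\ref{thm:Calabi}, and some such argument is unavoidable. Second, two reductions at the start need justification: (a) that $\omega-\ol{\omega}$, which is merely exact on the end, is $dd^c$ of a function \emph{decaying} at infinity --- this is exactly Proposition~\ref{prop:dd-bar} and requires $H^1(S,\R)=0$, the $L^2$ Hodge theory, and the weighted Laplacian theory, not just exactness; and (b) that $(\ol{\omega}+dd^c\phi)^m=\ol{\omega}^m$ exactly rather than $e^u\ol{\omega}^m$ for some pluriharmonic $u$ --- one must invoke a global Liouville argument on all of $\hat{X}$, since on the end alone a nonzero decaying harmonic $u$ would enter the equation at the critical order $r^{2-2m}$ and destroy the sharp rate. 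Third, your bootstrap inverts $\Delta$ on an exterior domain while Theorem~\ref{thm:lap-weight} is a global statement; a cutoff introduces a compactly supported error contributing another multiple of $r^{2-2m}$, harmless but to be accounted for. None of these is fatal, but as written the proposal establishes only the decay estimate (\ref{eq:conv-cpt}) and not the full statement; the non-compact case (\ref{eq:conv-noncpt}) remains at the level of a heuristic, which is consistent with the paper's own deferral of that case to the construction in~\cite{Got}.
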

\begin{remark}
Both rates of convergence in (\ref{eq:conv-cpt}) and (\ref{eq:conv-noncpt}) are sharp.  The contribution of this article is
in proving the sharp convergence in (\ref{eq:conv-cpt}).  The author proved a weaker version in~\cite{vC3} where the exponent in
(\ref{eq:conv-cpt}) is $-2m+\delta,\ \delta>0$.  This was also proved by R. Goto~\cite{Got} along with a clever proof of the existence for non-compact K\"{a}hler classes.
\end{remark}

The first part of this article considers the more general case of arbitrary real manifolds with a conical end.
Using methods of A. Grigor'yan and L. Saloff-Coste we prove that every such manifold satisfies the parabolic Harnack
inequality, and as a consequence we have a Gaussian bound on the heat kernel.  This follows from the
scale-invariant Poincar\'{e} inequality, and the volume doubling condition, which is proved
using a discretization technique.  We use this to prove some results on the Laplacian on weighted H\"{o}lder
spaces that will be needed later.

In the second part we consider asymptotically conical K\"{a}hler manifolds.
The main result is a version of the Calabi conjecture for asymptotically conical K\"{a}hler manifolds.
The proof due to D. Joyce~\cite{Joy1} for ALE K\"{a}hler manifolds goes through as is using the Sobolev inequality and
the results on the Laplacian on weighted H\"{o}lder space in this context.  The result is that solutions to the 
Monge-Amp\'{e}re equation on asymptotically conical K\"{a}hler manifolds behave as on ALE K\"{a}hler manifolds.

The final section gives an overview of some examples of Ricci-flat asymptotically conical manifolds given by
Theorem~\ref{thm:main}.  One is considering cones over Sasaki-Einstein manifolds which have crepant resolutions.
Examples can be found from hypersurface singularities.  Many such examples of Sasaki-Einstein manifolds are known
(cf.~\cite{BG2,BGJ,BG3}).  We also briefly discuss the toric case.  The existence problem of Sasaki-Einstein metrics in this case
is solved~\cite{FOW}.  Therefore it is an easy source of examples.

Theorem~\ref{thm:main} does not settle the existence problem of asymptotically conical Ricci-flat K\"{a}hler metrics.
More generally, one can consider crepant resolutions $\pi:\hat{X}\rightarrow X$ of more general affine varieties $X$
such that $\hat{X}$ is a quasi-projective variety with an end which is diffeomorphic to a cone, but not holomorphically.

\subsection*{Acknowledgements}

I would like to thank Gilles Carron for referring me to the articles~\cite{Min} of V. Minerbe and~\cite{GriSal-Cos} of
A. Grigor'yan and L. Saloff-Coste which allowed me to prove the heat kernel bounds.

\subsection*{Notation}

We will use `$\subset$' to denote set inclusion, proper and otherwise.  The geodesic ball
centered at $x\in M$ of radius $r$ will be denoted by $B(x,r)$, and we will use $V(x,r)$ to denote its volume with respect
to the Riemannian measure.

\section{Asymptotically conical manifolds and}

\subsection{Asymptotically conical manifolds}

We cover some analysis on asymptotically conical manifolds.  This will be used in the sequel in the proof of the
Calabi conjecture.  But it may be of independent interest.

Let $(X,g)$ be a complete Riemannian manifold.  Using the metric $g$ and Levi-Civita connection $\nabla$ we
have the Sobolev spaces $L_k^q(X)$, the $C^k$-spaces $C^k(X)$, and the H\"{o}lder spaces $C^{k,\alpha}(X)$.
But we will also need \emph{weighted Sobolev spaces} and \emph{weighted H\"{o}lder spaces}.

Let $o\in X$ be a fixed point and $d(o,x)$ the Riemannian distance to $x\in X$.  Then we define a weight
function $\rho(x)=(1+d(o,x)^2)^{1/2}$.
\begin{defn}
Let $q\geq 1$, $\beta\in\R$, and $k$ a nonnegative integer.  We define the \emph{weighted Sobolev space}
$L_{k,\beta}^q$ to be the set of functions $f$ on $X$ which are locally integrable and whose weak derivatives
up to order $k$ are locally integrable and for which the norm
\[ \|f\|_{L_{k,\beta}^q}:=\left(\sum_{j=0}^k \int_X |\rho^{j-\beta}\nabla^j f|^q \rho^{-n}d\mu_g \right)^{1/q}\]
is finite.  Then $L_{k,\beta}^q$ is a Banach space with this norm.
\end{defn}

\begin{defn}
For $\beta\in\R$ and $k$ a nonnegative integer we define $C_\beta^k(X)$ to be the space of continuous functions $f$
with $k$ continuous derivatives for which the norm
\[ \|f\|_{C_\beta^k} := \sum_{j=0}^k \underset{X}{\sup}|\rho^{j-\beta}\nabla^j f|\]
is finite.  Then $C_\beta^k(X)$ is a Banach space with this norm.

Let $\inj(x)$ be the injectivity radius at $x\in X$, and $d(x,y)$ the distance between $x,y\in X$.  Then for
$\alpha,\gamma\in\R$ and $T$ a tensor field define
 \[ [T]_{\alpha,\gamma} :=\sup_{\substack{x\neq y\\ d(x,y)<\inj(x)}}\left[\min(\rho(x),\rho(y))^{-\gamma}\cdot\frac{|T(x)-T(y)|}{d(x,y)^\alpha}\right],  \]
where $|T(x)-T(y)|$ is defined by parallel translation along the unique geodesic between $x$ and $y$.

For $\alpha\in(0,1)$ define the \emph{weighted H\"{o}lder space} $C_\beta^{k,\alpha}(X)$ to be the set of
$f\in C_\beta^k(X)$ for which the norm
\begin{equation}
\|f\|_{C_\beta^{k,\alpha}} :=\|f\|_{C_\beta^k} +[\nabla^k f]_{\alpha,\beta-k-\alpha}
\end{equation}
is finite.  Then $C_\beta^{k,\alpha}(X)$ is a Banach space with this norm.

Define $C_\beta^\infty (X)$ to be the intersection of the $C_\beta^k(X)$, for $k\geq 0$.
\end{defn}
It will be convenient to use a different weight function $\rho(x)$ on the manifolds we consider but it will define
equivalent norms.  See~\cite{Ch-SCh-B} for an introduction to the theory of weighted H\"{o}lder spaces.  

\begin{defn}\label{defn:cone}
Let $(S,g_S)$ be a compact Riemannian manifold.  The cone over $(S,g_S)$ is the Riemannian manifold $(C(S),g)$
with $C(S)=\R_{>0} \times S$ and $g=dr^2 +r^2 g_S$ where $r$ usual coordinate on $R_{>0}$.
\end{defn}
We will sometimes consider $(C(S),g)$ with the apex $o\in C(S)$ at $r=0$.  This is singular at $o\in C(S)$
unless $S=S^{n-1}$ is the sphere with the round metric.

\begin{defn}\label{defn:AC}
Let $(C(S),g_0)$ be a metric cone.  Then $(X,g)$ is asymptotically conical of order $(\delta, k+\alpha)$,
if there is compact subset $K\subset X$, a compact neighborhood $o\in K_0 \subset C(S)$, and diffeomorphism
$\phi: X\setminus K \rightarrow C(S)\setminus K_0$ so that
\[ |\phi_* g -g_0| \in C_{\delta} ^{k,\alpha}\quad\text{on } C(S)\setminus K_0.\]
We will abbreviate this by $\AC(\delta, a+\alpha)$, and always $\delta<0$.
\end{defn}

Suppose $K_0\subset D_{r_0} =\{(r,s)\in C(S): r< r_0\}$, and assume $r_0 \geq 2$.  Then a smooth extension of
$\phi^* r :X\setminus\phi^{-1}(D_{r_0}) \rightarrow[2,\infty)$ to $\rho: X\rightarrow [1,\infty)$ is a
\emph{radius function} of the $\AC$ manifold $(X,g)$.

\begin{remark}
It will be convenient sometimes to define weighted H\"{o}lder and Sobolev spaces using the radius function
as the weight function $\rho$.  In other cases we will use $\tilde{\rho(x)}=(1+d(o,x)^2)^{1/2}$ for $o\in K$.
This will mostly be a matter of convenience.  If $(X,g)$ is $\AC(\delta,0), \delta>0,$ it is not difficult to check
that $c^{-1}\tilde{\rho}\leq\rho\leq c\tilde{\rho}$ for $c>0$.  Thus the weighted norms are equivalent.
We will denote such a relation between functions by $\rho\sim\tilde{\rho}$.
\end{remark}

\subsection{The Sobolev inequality}

We give a proof of the Sobolev inequality on asymptotically conical manifolds. 
Recall that metrics $g$ and $\tilde{g}$ on $X$ are \emph{quasi-isometric} if there is a $c>0$ so that
\begin{equation}\label{eq:quasi-isom}
c^{-1}g_x (W,W)\leq \tilde{g}_x (W,W)\leq cg_x (W,W),\quad\forall x\in X\text{ and }\forall W\in T_x X.
\end{equation}
\begin{thm}\label{thm:Sobolev}
Let $(X,g)$ be AC of order $(\delta, 0), \delta<0,$ or merely quasi-isometric to an AC manifold.  Then there is a constant
$C>0$ so that we have the \emph{Sobolev inequality}
\begin{equation}\label{eq:Sobolev}
\|f\|_{n/(n-1)} \leq C\|\nabla f\|_1 ,\quad\forall f\in C_0 ^\infty (X).
\end{equation}
\end{thm}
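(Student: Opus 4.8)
The plan is to first reduce to a convenient model using quasi-isometry invariance, then establish the inequality on the exact cone, and finally patch the conical end to the compact core. \emph{Step 1 (Quasi-isometry invariance).} If $g$ and $\tilde g$ satisfy \eqref{eq:quasi-isom}, then the inverse metrics obey the same two-sided bound with the same $c$, so $c^{-1/2}|df|_g\le|df|_{\tilde g}\le c^{1/2}|df|_g$, while the Riemannian measures satisfy $c^{-n/2}\,d\mu_g\le d\mu_{\tilde g}\le c^{n/2}\,d\mu_g$. Hence both sides of \eqref{eq:Sobolev} change by at most a fixed factor when $g$ is replaced by $\tilde g$, so it suffices to prove \eqref{eq:Sobolev} for any one metric in the quasi-isometry class. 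This disposes of the clause ``merely quasi-isometric to an AC manifold,'' and moreover lets me assume $g$ agrees exactly with the cone metric $g_0$ on the end $\phi^{-1}(C(S)\setminus K_0)$ and is an arbitrary smooth metric on the core, since an $\AC(\delta,0)$ metric with $\delta<0$ is globally quasi-isometric to such a model (the difference $g-g_0$ is $O(r^\delta)\to 0$ on the end, and two smooth metrics are comparable on the compact core).

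\emph{Step 2 (The model cone).} By the Federer--Fleming--Maz'ya equivalence, \eqref{eq:Sobolev} is equivalent to the isoperimetric inequality
\[ \Vol(\Omega)^{(n-1)/n}\le C\,\mathrm{Area}(\partial\Omega) \]
over precompact $\Omega$ with smooth boundary, so I will work with this geometric form. On the exact cone I exploit the two features that make the Euclidean exponent correct: the slice $\{r=t\}$ carries the metric $t^2 g_S$, so $d\mu=r^{n-1}\,dr\,d\mu_S$ and balls about the apex have exactly Euclidean volume $V(o,t)=\tfrac1n\Vol(S,g_S)\,t^n$; and the dilation $r\mapsto\lambda r$ is a homothety under which the displayed ratio is scale invariant. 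Slicing in $r$ and applying the co-area formula, the boundary area controls both the radial growth of the slices and, through the (positive) isoperimetric profile of the compact link $(S,g_S)$, their transverse size; scale invariance then reduces the estimate to a single dyadic scale, yielding a constant $C_S$ depending only on $(S,g_S)$ and $n$.

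\emph{Step 3 (Patching).} It remains to glue the cone end, where Step 2 gives the inequality, to the compact core, and I expect this to be the main obstacle. The naive cutoff argument --- writing $f=\chi_1 f+\chi_2 f$ for a partition of unity subordinate to core $\cup$ end, applying the cone inequality to each piece, and using $\|f\|_{n/(n-1)}\le\|\chi_1 f\|_{n/(n-1)}+\|\chi_2 f\|_{n/(n-1)}$ --- produces the uncontrolled term $\int|f|\,|\nabla\chi_i|\,d\mu$ supported on the compact overlap, and on a compact set $\|f\|_{L^1}$ is not dominated by $\|\nabla f\|_{L^1}$ because constants obstruct. The remedy is to argue geometrically: combine the \emph{relative} isoperimetric inequality on the cone end (charging only the true boundary $\partial\Omega\cap\{r>r_0\}$) with the local, essentially Euclidean, isoperimetric inequality on the bounded core supplied by bounded geometry. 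What makes the pieces fit is exactly the Euclidean volume growth of Step 2: at every scale the scale-invariant ratio $\mathrm{Area}(\partial\Omega)/\Vol(\Omega)^{(n-1)/n}$ stays bounded below, so no family of domains --- small ones (trapped by the local inequality), large ones (trapped in the end by the relative cone inequality), or those straddling the collar --- can drive it to zero. A compactness/contradiction argument on this ratio then produces a single positive lower bound, i.e. the global isoperimetric inequality, and hence \eqref{eq:Sobolev}.
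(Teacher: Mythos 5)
Your Step 1 and the Federer--Fleming reduction to the isoperimetric inequality are fine, and your overall architecture (exact cone plus compact core) matches the reduction the paper makes. But the two places where the actual work lives are precisely the two places you wave at rather than argue. First, in Step 2 the assertion that ``scale invariance then reduces the estimate to a single dyadic scale'' is unjustified and, as stated, false: a precompact $\Omega\subset C(S)$ can span arbitrarily many dyadic annuli $A(\kappa^{i-1}R,\kappa^{i}R)$, and no homothety takes it to a domain concentrated at one scale. Scale invariance only gives you a \emph{uniform Neumann-type constant on each annulus}; you must then sum the contributions over scales, and a naive triangle-inequality summation destroys the exponent $n/(n-1)$. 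What rescues the sum is the geometric growth $\mu(A_i)=\kappa^{n(i-1)}\mu(A_1)$, which yields a discrete isoperimetric inequality $m(\Omega)\le C\,m(\partial\Omega)$ on the chain of annuli; this is exactly the content of Definition~\ref{defn:Sobolev-dis-Dir}, Proposition~\ref{prop:disc-isop}, and the computation in the paper's proof of Theorem~\ref{thm:Sobolev}, fed into Minerbe's patching theorem (Theorem~\ref{thm:cov-Sobolev-Dir}). Your sketch contains no substitute for this step.

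Second, in Step 3 the ``compactness/contradiction argument on the ratio'' is not an argument: the class of precompact domains in a noncompact manifold carries no topology in which it is compact and the isoperimetric ratio is continuous, and a minimizing sequence $\Omega_k$ can shrink, escape to infinity, or disconnect into pieces living at wildly different scales, with no limit object to contradict. The patching must be done by hand --- a case analysis using relative isoperimetric inequalities on the core and on the end, where the relative inequality on $\{r>r_0\}$ (charging only $\partial\Omega\cap\{r>r_0\}$ and not the portion of boundary on $\{r=r_0\}$) is itself a Neumann-type statement that does not follow formally from the global cone inequality of your Step 2 --- or, as in the paper, absorbed into the good-covering formalism, where the ball $D_R$ is simply taken as the vertex $A_0$ of the same weighted graph and Theorem~\ref{thm:cov-Sobolev-Dir} does the gluing with an explicit constant. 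I would recommend replacing Steps 2--3 by the covering argument of Lemma~\ref{lem:Sobolev-anal} together with the discrete isoperimetric computation, or else writing out the multi-scale summation and the core/end case analysis in full; as it stands the proposal records the correct difficulties but does not resolve them.
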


And easy argument with the H\"{o}lder inequality gives the following.
\begin{cor}\label{cor:Sobolev}
For any real $p, 1\leq p<n$ we have
\begin{equation}
\|f\|_{np/(n-p)} \leq C\|\nabla f\|_p ,\quad\forall f\in C_0 ^\infty (X).
\end{equation}
\end{cor}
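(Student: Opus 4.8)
The plan is to derive the $L^p$ inequality from the $L^1$ case in Theorem~\ref{thm:Sobolev} by the standard power-substitution trick. The case $p=1$ is exactly \eqref{eq:Sobolev}, so I assume $1<p<n$. Write $q=\frac{np}{n-p}$ for the target exponent and set $\gamma=\frac{(n-1)p}{n-p}$. A one-line computation gives $\gamma\geq 1$ and $\gamma\cdot\frac{n}{n-1}=q$, so for $f\in C_0^\infty(X)$ the function $|f|^\gamma$ is Lipschitz with compact support.

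Next I would apply Theorem~\ref{thm:Sobolev} to $|f|^\gamma$. Since \eqref{eq:Sobolev} is stated only for $C_0^\infty$ functions whereas $|f|^\gamma$ is merely Lipschitz, I would justify this by a routine regularization (smoothing $|f|^\gamma$, or noting that \eqref{eq:Sobolev} extends by density to compactly supported $W^{1,1}$ functions). Using $\nabla|f|^\gamma=\gamma|f|^{\gamma-1}\nabla f$ almost everywhere, the $L^1$ Sobolev inequality becomes
\[
\left(\int_X |f|^q\,d\mu_g\right)^{(n-1)/n}\leq C\gamma\int_X |f|^{\gamma-1}|\nabla f|\,d\mu_g.
\]

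Finally I would estimate the right-hand side by Hölder's inequality with conjugate exponents $p$ and $p'=p/(p-1)$, obtaining the factor $\|\nabla f\|_p$ together with $\bigl(\int_X|f|^{(\gamma-1)p'}\,d\mu_g\bigr)^{1/p'}$. The decisive bookkeeping is that $(\gamma-1)p'=q$, so this factor equals $\|f\|_q^{q/p'}$; dividing the combined estimate through by it (finite and nonzero unless $f\equiv 0$) collapses the exponent on the left to $\frac{n-1}{n}-\frac{1}{p'}=\frac{1}{p}-\frac{1}{n}=\frac{1}{q}$, which yields $\|f\|_q\leq C\gamma\|\nabla f\|_p$. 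This is the claim, with constant $\frac{(n-1)p}{n-p}C$. The argument is elementary once the exponents are matched; the only point needing care is the non-smoothness of $|f|^\gamma$ on the zero set of $f$, which is handled by the regularization above, so I anticipate no genuine obstacle.
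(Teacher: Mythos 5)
Your argument is correct and is precisely the ``easy argument with the H\"{o}lder inequality'' the paper has in mind: apply the $L^1$ inequality of Theorem~\ref{thm:Sobolev} to $|f|^{\gamma}$ with $\gamma=\frac{(n-1)p}{n-p}$ and then use H\"{o}lder; your exponent bookkeeping checks out, and the regularization remark disposes of the only technical wrinkle. Nothing further is needed.
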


Note that (\ref{eq:Sobolev}), and most of what follows in this section, is stable under quasi-isometries.
So all that is essential in Theorem~\ref{thm:Sobolev} is that the end of $(X,g)$ is quasi-isometric to a cone.

We prove Theorem~\ref{thm:Sobolev} with a discretization procedure used in~\cite{GriSal-Cos} to prove
Poincar\'{e} inequalities and generalized in~\cite{Min} to prove more general Poincar\'{e}-Sobolev inequalities.
A proof of (\ref{eq:Sobolev}) is given in~\cite{TY2}, but our result is more general and the proof is simpler.
Let $\mu$ denote the Riemannian measure on $(X,g)$.
\begin{defn}\label{defn:good-cov}
Let $A\subset A^{\#}$ be subsets of $X$.  A family $\mathcal{U}=(U_i ,U^*_i ,U^{\#}_i)_{i\in I}$ consisting of subsets
of $X$ having finite measure is said to be a good covering of $A$ in $A^{\#}$ if the following is true:
\begin{thmlist}
\item  $A\subset\cup_i U_i \subset\cup_i U_i^{\#} \subset A^{\#}$;\label{defn:gc-i}
\item  $\forall i\in I, U_i \subset U_i^* \subset U_i^{\#}$;\label{defn:gc-ii}
\item  $\exists Q_1 ,\forall i_0 \in I, \Card\{i\in I :U_{i_0}^{\#} \cap U_i^{\#} \neq\emptyset\}\leq Q_1$;\label{defn:gc-iii}
\item  For every $(i,j)\in I^2$ satisfying $\ol{U}_i \cap\ol{U}_j \neq\emptyset$, there is an element $k(i,j)$ such that
$U_i \cup U_j \subset U^*_{k(i,j)}$;\label{defn:gc-iv}
\item  There exists a $Q_2$ such that for every $(i,j)\in I^2$ with $\ol{U}_i \cap\ol{U}_j \neq\emptyset$ we have
$\mu(U^*_{k(i,j)})\leq Q_2 \min(\mu(U_i),\mu(U_j))$.\label{defn:gc-v}
\end{thmlist}
\end{defn}

Given a Borel set $U$ with finite $\mu$-measure and a $\mu$-integrable function $f$, denote by $f_U$ the mean
value of $f$ on $U$:
\[ f_U =\frac{1}{\mu(U)}\int_U f d\mu.\]

Given any good covering we have an associated weighted graph $(\mathcal{G},m)$ as follows.
\begin{defn}\label{defn:weig-graph}
Let $\mathcal{U}=(U_i ,U^*_i ,U^{\#}_i)_{i\in I}$ be a good covering of $A$ in $A^{\#}$.  The associated weighted
graph $(\mathcal{G},m)$ has vertices $\mathcal{V}=I$ and edges
$\mathcal{E}=\{\{i,j\}\subset\mathcal{V}: i\neq j, \ol{U}_i \cap\ol{U}_j \neq\emptyset\}$.

Measures, both denoted $m$, are defined on $\mathcal{V}$ and $\mathcal{E}$ as follows:
\begin{thmlist}
\item  $\forall i\in\mathcal{V}, m(i)=\mu(U_i)$;
\item  $\forall \{i,j\}\in\mathcal{E}, m(i,j)=\max(m(i),m(j)).$
\end{thmlist}
\end{defn}

We will patch together Sobolev inequalities on the subsets $(U_i ,U^*_i ,U^{\#}_i)$ of a good covering using
discrete Sobolev inequalities on the associated weighted graph $(\mathcal{G},m)$.

\begin{defn}\label{defn:Sobolev-cont}
Let $1\leq p<\infty$, and suppose $p<\nu\leq\infty$.  We say that a good covering $\mathcal{U}$ satisfies a continuous
$L^p$ Sobolev inequality of order $\nu$ if there exists a constant $S_c$ such that for every $i\in I$
\[ \left(\int_{U_i} |f-f_{U_i}|^{\frac{p\nu}{\nu-p}}d\mu\right)^{\frac{\nu-p}{\nu}} \leq S_c \int_{U^*_i}|\nabla f|^p d\mu, \quad\forall f\in C^{\infty}(U^*_i),\]
and
\[ \left(\int_{U^*_i} |f-f_{U_i}|^{\frac{p\nu}{\nu-p}}d\mu\right)^{\frac{\nu-p}{\nu}} \leq S_c \int_{U^{\#}_i}|\nabla f|^p d\mu, \quad\forall f\in C^{\infty}(U^{\#}_i).\]
\end{defn}

\begin{defn}\label{defn:Sobolev-dis-Dir}
Given $1\leq p<\nu\leq\infty$, we say that the weighted graph $(\mathcal{G},m)$ satisfies a discrete $L^p$
Sobolev-Dirichlet inequality of order $\nu$ if there exists a constant $S_d$ such that for every
$f\in L^p(\mathcal{V},m)$
\[ \left(\sum_{i\in\mathcal{V}} |f(i)|^{\frac{p\nu}{\nu-p}} m(i)\right)^{\frac{\nu-p}{\nu}}\leq S_d \sum_{{i,j}\in\mathcal{E}} |f(i)-f(j)|^p m(i,j). \]
\end{defn}

Suppose $(\mathcal{G},m)$ is a finite weighted graph.  Then for $f\in\R^\nu$, denote by $m(f)$ the mean of $f$
\[m(f)=\frac{1}{m(\mathcal{V})}\sum_{i\in\mathcal{V}} f(i).\]

\begin{defn}\label{defn:Sobolev-dis-Neu}
Given $1\leq p<\nu\leq\infty$, we say that a finite weighted graph $(\mathcal{G},m)$ satisfies a discrete $L^p$
Sobolev-Neumann inequality of order $\nu$ if there exists a constant $S_d$ such that for every $f\in\R^{N}$, $N=|\mathcal{V}|$,
\[ \left(\sum_{i\in\mathcal{V}} |f(i)-m(f)|^{\frac{p\nu}{\nu-p}} m(i)\right)^{\frac{\nu-p}{\nu}}\leq S_d \sum_{{i,j}\in\mathcal{E}} |f(i)-f(j)|^p m(i,j). \]
\end{defn}

Note that the $L^p$ Sobolev inequality of order $\nu=\infty$ is the $L^p$ Poincar\'{e} inequality.

The following theorem is crucial in the proof of the Sobolev inequality on $(X,g)$.  We also state the ``Neumann''
analogue of this theorem since we will use it in a proof of a Poincar\'{e} inequality on $(X,g)$.

\begin{thm}[\cite{Min}]\label{thm:cov-Sobolev-Dir}
Suppose $1\leq p<\nu\leq\infty$.  If a good covering $\mathcal{U}$ of $A$ in $A^{\#}$ satisfies the continuous $L^p$ Sobolev inequality of order $\nu$ (Definition~\ref{defn:Sobolev-cont}) and the discrete $L^p$ Sobolev-Dirichlet
inequality of order $\infty$ (Definition~\ref{defn:Sobolev-dis-Dir}), then the following Sobolev-Dirichlet inequality
is true:
\[ \left(\int_A |f|^{\frac{p\nu}{\nu-p}} d\mu\right)^{\frac{\nu-p}{p}}\leq S\int_{A^{\#}}|\nabla f|^p d\mu,\quad\forall f\in C_c^{\infty}(A).  \]
Furthermore, one can choose $S=S_c Q_1 2^{p-1+\frac{p}{\nu}}(1+S_d Q_2(2^p Q_1^2)^{\frac{\nu}{\nu-p}})^{\frac{\nu-p}{\nu}}$.
\end{thm}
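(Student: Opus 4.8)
The plan is to run the discretization scheme of Grigor'yan--Saloff-Coste and Minerbe, which decouples the global inequality on $A$ into two independent pieces: a family of \emph{local} Sobolev estimates, supplied by the continuous hypothesis, and a single \emph{combinatorial} inequality on the graph $(\mathcal{G},m)$, supplied by the discrete hypothesis. Write $q=\frac{p\nu}{\nu-p}$, so that $\frac{p}{q}=\frac{\nu-p}{\nu}\le 1$, and fix $f\in C_c^\infty(A)$. For each $i\in I$ let $f_{U_i}$ be the mean of $f$ on $U_i$, and let $F\colon\mathcal{V}\to\R$, $F(i)=f_{U_i}$, be the induced vertex function. The target is $\big(\int_A|f|^q\,d\mu\big)^{p/q}\le S\int_{A^{\#}}|\nabla f|^p\,d\mu$.

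First I would localise. Since the $U_i$ cover $A$, $\int_A|f|^q\,d\mu\le\sum_i\int_{U_i}|f|^q\,d\mu$, and on each piece the $L^q$ triangle inequality gives
\[
\int_{U_i}|f|^q\,d\mu\le 2^{q-1}\Big(\int_{U_i}|f-f_{U_i}|^q\,d\mu+|F(i)|^q m(i)\Big).
\]
Raising to the power $\frac{p}{q}$ and using the subadditivity of $t\mapsto t^{p/q}$ twice splits the estimate into an oscillation sum $\sum_i\big(\int_{U_i}|f-f_{U_i}|^q\big)^{p/q}$ and a mean-value term $\big(\sum_i|F(i)|^q m(i)\big)^{p/q}$, with the factor $2^{(q-1)p/q}=2^{\,p-1+p/\nu}$ out front --- exactly the power of $2$ appearing in the stated $S$. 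The oscillation sum is immediate: the first inequality of Definition~\ref{defn:Sobolev-cont} bounds each summand by $S_c\int_{U_i^*}|\nabla f|^p$, and the bounded-overlap axiom~(\ref{defn:gc-iii}) turns $\sum_i\int_{U_i^*}$ into at most $Q_1\int_{A^{\#}}|\nabla f|^p$.

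The mean-value term is where the work lies. Applying the discrete Sobolev-Dirichlet inequality of Definition~\ref{defn:Sobolev-dis-Dir} to $F$ bounds $\big(\sum_i|F(i)|^q m(i)\big)^{p/q}$ by $S_d\sum_{\{i,j\}\in\mathcal{E}}|f_{U_i}-f_{U_j}|^p\,m(i,j)$, so it remains to dominate each discrete gradient by a continuous Dirichlet energy. For an edge $\{i,j\}$, axiom~(\ref{defn:gc-iv}) produces $k=k(i,j)$ with $U_i\cup U_j\subset U_k^*$; inserting the mean of $f$ over $U_k^*$ and estimating $|f_{U_i}-f_{U_k^*}|$ by the normalised oscillation of $f$ on $U_k^*$ (via Hölder on $U_i$, and replacing the mean on $U_k^*$ by that on $U_k$ at the cost of a bounded factor), followed by the second inequality of Definition~\ref{defn:Sobolev-cont} on $U_k^*\subset U_k^{\#}$, expresses $|f_{U_i}-f_{U_j}|^p\,m(i,j)$ through $\int_{U_k^{\#}}|\nabla f|^p$ up to measure ratios $\mu(U_k^*)/\mu(U_i)$. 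Here axiom~(\ref{defn:gc-v}) is essential: it forces $\mu(U_k^*)\le Q_2\min(\mu(U_i),\mu(U_j))$, so adjacent cells are comparable and these ratios stay bounded by $Q_2$; moreover each $U_k^{\#}$ occurs as $U_{k(i,j)}^{\#}$ for at most $Q_1^2$ edges (again by~(\ref{defn:gc-iii})). Collecting the factors $S_c$, $Q_1$, $Q_2$, $S_d$ and $2^pQ_1^2$ through these steps yields the remaining term $\big(1+S_d Q_2(2^pQ_1^2)^{\nu/(\nu-p)}\big)^{(\nu-p)/\nu}$ and the claimed value of $S$.

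The main obstacle is precisely this last comparison: converting the purely combinatorial gradient $|f_{U_i}-f_{U_j}|$ into $\int_{U_k^{\#}}|\nabla f|^p$ \emph{with a constant depending only on} $S_c,S_d,Q_1,Q_2,p,\nu$ and not on the covering or the support of $f$. The delicacy is that the continuous inequality is stated around the mean on the small cell $U_k$, while the natural quantity is the oscillation on the larger $U_k^*$, and that the Hölder steps produce measure ratios that would be uncontrolled without the slow variation of $\mu(U_i)$ guaranteed by axiom~(\ref{defn:gc-v}); tracking every constant so as to land on the stated $S$ is the bookkeeping heart of the argument. The Neumann analogue is proved identically, using the discrete Sobolev-Neumann inequality (Definition~\ref{defn:Sobolev-dis-Neu}) on the finite graph in place of the Dirichlet one and subtracting the global mean of $f$ throughout.
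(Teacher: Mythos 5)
A preliminary remark: the paper contains no proof of Theorem~\ref{thm:cov-Sobolev-Dir} at all --- it is imported verbatim from Minerbe's article \cite{Min} --- so your proposal can only be measured against that source. Your skeleton is the right one: localise via $\int_A|f|^q\,d\mu\le\sum_i\int_{U_i}|f|^q\,d\mu$ with $q=p\nu/(\nu-p)$, split each cell into the oscillation $\int_{U_i}|f-f_{U_i}|^q$ and the mean term $|f_{U_i}|^qm(i)$, and control the oscillation sum by the first inequality of Definition~\ref{defn:Sobolev-cont} together with the overlap bound; this correctly produces the factor $S_cQ_12^{p-1+p/\nu}$.

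The gap is in the mean-value term, which is the actual content of the theorem. You bound $(\sum_i|f_{U_i}|^qm(i))^{p/q}$ by $S_d\sum_{\{i,j\}}|f_{U_i}-f_{U_j}|^pm(i,j)$ ``by Definition~\ref{defn:Sobolev-dis-Dir}''; but that is the discrete inequality of \emph{order $\nu$}, whereas the hypothesis supplies only the order-$\infty$ (Poincar\'e-type) inequality, which controls $\sum_i|f_{U_i}|^pm(i)$. There is no general passage from an $\ell^p(m)$ bound to an $\ell^q(m)$ bound on a weighted graph, and in the very application made of this theorem in the paper the masses $m(i)=\mu(A_i)\sim\kappa^{ni}$ are wildly non-uniform; the gain of integrability from $p$ to $q$ must come from the \emph{continuous} local inequalities, which is precisely what makes \cite{Min} nontrivial. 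Your second step has a matching defect: the claimed bound $|f_{U_i}-f_{U_j}|^pm(i,j)\le CS_c\int_{U_{k(i,j)}^{\#}}|\nabla f|^p\,d\mu$ with $C$ depending only on $Q_2$ and $p$ is impossible when $\nu<\infty$. Indeed $|f_{U_i}-f_{U_{k}}|\le\mu(U_i)^{-1/q}\|f-f_{U_{k}}\|_{L^q(U_{k}^*)}$ by H\"older, and combining this with $m(i,j)\le\mu(U^*_{k(i,j)})\le Q_2\min(\mu(U_i),\mu(U_j))$ leaves an uncancelled factor $\min(\mu(U_i),\mu(U_j))^{1-p/q}=\min(\mu(U_i),\mu(U_j))^{p/\nu}$, which is unbounded over the covering; equivalently, the two sides of your claimed edge estimate have different homogeneity in the measure. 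These two mismatched powers of $\mu$ are exactly what Minerbe's argument is engineered to cancel against one another, by applying the order-$\infty$ discrete inequality to a suitably rescaled vertex function rather than to $i\mapsto f_{U_i}$ itself; simply concatenating an (unavailable) order-$\nu$ discrete inequality with an edge-by-edge domination does not close. The Neumann analogue sketched in your last paragraph inherits the same problem.
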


\begin{thm}[\cite{Min}]\label{thm:cov-Sobolev-Neu}
Suppose $1\leq p<\nu\leq\infty$.  If a finite good covering $\mathcal{U}$ of $A$ in $A^{\#}$ satisfies the continuous $L^p$ Sobolev inequality of order $\nu$ (Definition~\ref{defn:Sobolev-cont}) and the discrete $L^p$ Sobolev-Neumann
inequality of order $\infty$ (Definition~\ref{defn:Sobolev-dis-Neu}), then the following Sobolev-Neumann inequality
is true:
\[ \left(\int_A |f-f_A|^{\frac{p\nu}{\nu-p}} d\mu\right)^{\frac{\nu-p}{p}}\leq S\int_{A^{\#}}|\nabla f|^p d\mu,\quad\forall f\in C^{\infty}(A^{\#}).  \]
Furthermore, one can choose $S=S_c Q_1 2^{2p-1+\frac{p}{\nu}}(1+S_d Q_2(2^p Q_1^2)^{\frac{\nu}{\nu-p}})^{\frac{\nu-p}{\nu}}$.
\end{thm}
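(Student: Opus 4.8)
The plan is to prove the Neumann inequality along exactly the same lines as the Dirichlet case (Theorem~\ref{thm:cov-Sobolev-Dir}), patching the continuous Sobolev inequalities on the individual pieces $U_i$ together by means of the discrete Sobolev-Neumann inequality on the associated graph. Throughout I write $q=\frac{p\nu}{\nu-p}$ and $\theta=\frac{\nu-p}{\nu}\in(0,1)$, so that $q\theta=p$, and I introduce the graph function $F:\mathcal{V}\to\R$, $F(i)=f_{U_i}$. Since the target is a deviation-from-mean estimate, the first step is to replace the continuous mean $f_A$ by the graph mean $m(F)$: for any constant $c$ one has $\|f-f_A\|_{L^q(A)}\le 2\|f-c\|_{L^q(A)}$ by the triangle inequality together with Jensen's inequality, so it suffices to bound $\int_A|f-m(F)|^q\,d\mu$. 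This is where the finiteness of $\mathcal{U}$ enters, as it is what makes $m(F)$ and the finite-graph inequality of Definition~\ref{defn:Sobolev-dis-Neu} available; after raising to the power $\theta$ this reduction costs a factor $2^{q\theta}=2^p$, which is exactly what raises the exponent of $2$ from $p-1+\frac{p}{\nu}$ in the Dirichlet constant to $2p-1+\frac{p}{\nu}$ here.

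Next I would decompose. As $A\subset\bigcup_i U_i$ we have $\int_A|f-m(F)|^q\,d\mu\le\sum_i\int_{U_i}|f-m(F)|^q\,d\mu$, and the pointwise splitting $|f-m(F)|^q\le 2^{q-1}\bigl(|f-f_{U_i}|^q+|f_{U_i}-m(F)|^q\bigr)$ produces a \emph{local} sum $\sum_i\int_{U_i}|f-f_{U_i}|^q\,d\mu$ and a \emph{discrete} sum $\sum_i|F(i)-m(F)|^q\,m(i)$. The local sum is handled directly: raising to the power $\theta$ and using subadditivity of $x\mapsto x^\theta$, the first continuous Sobolev inequality of Definition~\ref{defn:Sobolev-cont} bounds each term by $S_c\int_{U_i^*}|\nabla f|^p\,d\mu$, and the bounded overlap (\ref{defn:gc-iii}) sums these to a uniform multiple $S_cQ_1\int_{A^\#}|\nabla f|^p\,d\mu$.

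The discrete sum is where the real work lies. Applying the discrete Sobolev-Neumann inequality of Definition~\ref{defn:Sobolev-dis-Neu} to $F$ reduces $\bigl(\sum_i|F(i)-m(F)|^q m(i)\bigr)^\theta$ to $S_d\sum_{\{i,j\}\in\mathcal{E}}|f_{U_i}-f_{U_j}|^p\,m(i,j)$, so the crux is to dominate this discrete Dirichlet energy by the continuous one $\int_{A^\#}|\nabla f|^p\,d\mu$. For a fixed edge $\{i,j\}$, i.e.\ $\ol{U}_i\cap\ol{U}_j\neq\emptyset$, property (\ref{defn:gc-iv}) supplies $k=k(i,j)$ with $U_i\cup U_j\subset U_k^*$; comparing both $f_{U_i}$ and $f_{U_j}$ to the common value $f_{U_k}$ via Jensen's and H\"{o}lder's inequalities, and then estimating the resulting oscillation $\int_{U_k^*}|f-f_{U_k}|^q\,d\mu$ through the second continuous Sobolev inequality of Definition~\ref{defn:Sobolev-cont}, should bound the edge contribution $|f_{U_i}-f_{U_j}|^p\,m(i,j)$ in terms of $\int_{U_k^\#}|\nabla f|^p\,d\mu$. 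Here the measure-comparability (\ref{defn:gc-v}), $\mu(U_k^*)\le Q_2\min(\mu(U_i),\mu(U_j))$, is what controls the ratios among $\mu(U_i)$, $\mu(U_j)$ and $\mu(U_k^*)$ and is meant to match the weights $m(i,j)=\max(m(i),m(j))$ to the continuous normalization, while the bounded overlap (\ref{defn:gc-iii}) limits how many edges share a given $k$, so that summing over $\mathcal{E}$ returns a uniform multiple of $\int_{A^\#}|\nabla f|^p\,d\mu$. I expect this discrete-to-continuous comparison of gradients to be the main obstacle, since it is the one place where the combinatorial properties (\ref{defn:gc-iii})--(\ref{defn:gc-v}) of the good covering must all be used at once and where the bookkeeping of the measures is delicate.

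Finally I would collect the local and discrete contributions and track the constants, which combine into the stated value $S=S_cQ_12^{2p-1+\frac{p}{\nu}}\bigl(1+S_dQ_2(2^pQ_1^2)^{\frac{\nu}{\nu-p}}\bigr)^{\frac{\nu-p}{\nu}}$. The only genuine differences from the Dirichlet case are the initial passage from $f_A$ to the graph mean $m(F)$ and the use of the finite-graph Sobolev-Neumann inequality (Definition~\ref{defn:Sobolev-dis-Neu}) in place of the Sobolev-Dirichlet inequality; the local estimate and the edge estimate are carried out verbatim as in Theorem~\ref{thm:cov-Sobolev-Dir}.
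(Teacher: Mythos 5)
First, a point of comparison: the paper does not prove this statement at all --- it is quoted verbatim from \cite{Min} --- so there is no in-paper argument to measure your proposal against; it has to stand on its own. Your skeleton (replace $f_A$ by a graph mean at the cost of a factor $2^p$, split into a local sum and a discrete sum, control the local sum by the first inequality of Definition~\ref{defn:Sobolev-cont} plus the overlap bound (iii), and control edge differences via $k(i,j)$ and properties (iv)--(v)) is the standard discretization scheme and is the right shape.

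The genuine gap is at the discrete step, and it is exactly the point that makes the theorem non-trivial. Your decomposition produces the quantity $\bigl(\sum_i|F(i)-m(F)|^{q}\,m(i)\bigr)^{\theta}$ with $q=\frac{p\nu}{\nu-p}>p$ and $\theta=\frac{\nu-p}{\nu}<1$, i.e.\ an $\ell^q(m)$ deviation. But the hypothesis is the discrete Sobolev--Neumann inequality of order $\infty$, which (setting $\nu=\infty$ in Definition~\ref{defn:Sobolev-dis-Neu}) only controls $\sum_i|F(i)-m(F)|^{p}\,m(i)$, with exponent $p$ and outer exponent $1$. You apply the definition as if the discrete inequality were of order $\nu$; that is a strictly stronger hypothesis than the one given, and it is not the one verified in the paper's applications (there one checks only the order-$\infty$ inequality, via the isoperimetric computation (\ref{eq:disc-isop-Neu})). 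There is no free passage from $\ell^p(m)$ to $\ell^q(m)$ for a general weight $m$, and promoting the order-$\infty$ discrete inequality to an order-$\nu$ conclusion is the real content of Minerbe's theorem. A symptom of the same problem appears in your edge estimate: carrying it out with the order-$\nu$ continuous inequality gives $|f_{U_i}-f_{U_j}|^p\,m(i,j)\lesssim Q_2\,\mu(U^*_{k(i,j)})^{p/\nu}\,S_c\int_{U^{\#}_{k(i,j)}}|\nabla f|^p\,d\mu$, with a leftover volume factor $\mu^{p/\nu}$ that does not disappear unless $\nu=\infty$. The correct proof is arranged so that this stray power of the measure cancels against the $\ell^q$-versus-$\ell^p$ mismatch on the discrete side (which is also where the exponent $\frac{\nu}{\nu-p}$ on $2^pQ_1^2$ in the stated constant comes from --- your bookkeeping would not produce it); as written, the two halves of your argument do not fit together for finite $\nu$. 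For $\nu=\infty$, where $q=p$ and $\theta=1$, your sketch is essentially complete, but the theorem is applied in the paper with $\nu=n<\infty$ (Lemma~\ref{lem:Sobolev-anal}), so the finite-$\nu$ case cannot be waved through.
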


We will need prove discrete Sobolev inequalities to apply these theorems.
The discrete Sobolev-Dirichlet inequality in Definition~\ref{defn:Sobolev-dis-Dir} follows
from an isoperimetric inequality on the graph $(\mathcal{G},m)$.
\begin{defn}
Given a graph $\mathcal{G}$, we define the boundary $\partial\Omega$ of a subset $\Omega\subset\mathcal{V}$ as
\[ \partial\Omega:=\{\{i,j\}\in\mathcal{E}:\{i,j\}\cap\Omega\neq\emptyset\text{ and } \{i,j\}\cap(\mathcal{V}\setminus\Omega)\neq\emptyset\}.\]
\end{defn}

The following result is completely analogous to the situation with continuous Sobolev inequalities.
\begin{prop}\label{prop:disc-isop}
Let $(\mathcal{G},m)$ be an infinite weighted graph and fix $1<\nu\leq\infty$.  Then the $L^1$ Sobolev inequality
of order $\nu$
\[ \left(\sum_{i\in\mathcal{V}} |f(i)|^{\frac{\nu}{\nu-1}} m(i)\right)^{\frac{\nu-1}{\nu}}\leq C\sum_{\{i,j\}\in\mathcal{E}} |f(i)-f(j)|m(i,j),\quad\forall f\in L^1(\mathcal{V},m),\]
is equivalent to the isoperimetric inequality of order $\nu$
\begin{equation}
m(\Omega)^{\frac{\nu-1}{\nu}} \leq C m(\partial\Omega),\quad\forall\Omega\subset\mathcal{V}\text{ with } m(\Omega)<\infty.
\end{equation}
\end{prop}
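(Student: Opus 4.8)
The plan is to recognize this as the discrete analogue of the classical Federer--Fleming/Maz'ya equivalence between the $L^1$ Sobolev inequality and the isoperimetric inequality; the bridge between the two is the layer-cake (coarea) representation of a function by its super-level sets. I would prove the two implications separately, the forward implication being a one-line test-function argument and the converse being the substance.

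For the direction \emph{Sobolev $\Rightarrow$ isoperimetric}, I would simply insert the indicator $f=\chi_\Omega$ of a set $\Omega$ with $m(\Omega)<\infty$ into the $L^1$ Sobolev inequality. The left-hand side becomes $m(\Omega)^{(\nu-1)/\nu}$, while on the right the difference $|\chi_\Omega(i)-\chi_\Omega(j)|$ is nonzero (and then equal to $1$) exactly on the edges $\{i,j\}$ with one endpoint in $\Omega$ and one outside, i.e. exactly on $\partial\Omega$; hence the right-hand side is $C\sum_{\{i,j\}\in\partial\Omega} m(i,j)=C\,m(\partial\Omega)$. This yields the isoperimetric inequality immediately.

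For the converse \emph{isoperimetric $\Rightarrow$ Sobolev} I would first reduce to $f\ge 0$: replacing $f$ by $|f|$ leaves the left side unchanged and, by the reverse triangle inequality $\big||f(i)|-|f(j)|\big|\le |f(i)-f(j)|$, only decreases the right side. For $f\ge 0$ set $\Omega_t=\{i\in\mathcal{V}: f(i)>t\}$, so that $f(i)=\int_0^\infty \chi_{\Omega_t}(i)\,dt$, and since $f\in L^1(\mathcal{V},m)$ Chebyshev's inequality gives $m(\Omega_t)\le t^{-1}\|f\|_{L^1(\mathcal V,m)}<\infty$ for every $t>0$, so the isoperimetric inequality applies to each $\Omega_t$. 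The key identity is the discrete coarea formula
\[ \sum_{\{i,j\}\in\mathcal{E}} |f(i)-f(j)|\,m(i,j)=\int_0^\infty m(\partial\Omega_t)\,dt, \]
obtained by noting that, for a fixed edge $\{i,j\}$ with (say) $f(i)\ge f(j)$, one has $\{i,j\}\in\partial\Omega_t$ precisely for $t\in[f(j),f(i))$, of length $|f(i)-f(j)|$, and then summing over edges by Tonelli. For the left side I would apply Minkowski's integral inequality in $L^{\nu/(\nu-1)}(\mathcal V,m)$ (the exponent is $\ge 1$ since $\nu>1$) to the representation of $f$, using $\|\chi_{\Omega_t}\|_{L^{\nu/(\nu-1)}(m)}=m(\Omega_t)^{(\nu-1)/\nu}$:
\[ \left(\sum_{i\in\mathcal{V}} f(i)^{\frac{\nu}{\nu-1}} m(i)\right)^{\frac{\nu-1}{\nu}}\le \int_0^\infty m(\Omega_t)^{\frac{\nu-1}{\nu}}\,dt\le C\int_0^\infty m(\partial\Omega_t)\,dt, \]
where the last step is the isoperimetric inequality applied pointwise in $t$. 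Combining this with the coarea identity recovers exactly the discrete $L^1$ Sobolev inequality of order $\nu$.

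The main technical point to be careful about is the converse direction: justifying the coarea identity and Minkowski's integral inequality in this mixed setting of a counting-type measure $m$ on $\mathcal{V}$ together with Lebesgue integration in the level parameter $t$, and confirming the interchange of summation and integration (legitimate by Tonelli, as all integrands are nonnegative). The case $\nu=\infty$, where the statement degenerates to the $L^1$ Poincaré inequality $\sum_i|f(i)|m(i)\le C\sum_{\{i,j\}}|f(i)-f(j)|m(i,j)$ paired with $m(\Omega)\le C\,m(\partial\Omega)$, is covered by the same argument with exponent $p=1$, for which Minkowski's inequality is just the triangle inequality for the integral. No step presents a genuine obstacle beyond this bookkeeping.
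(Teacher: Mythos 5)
Your proof is correct, and it is precisely the argument the paper has in mind: the paper omits the proof entirely, remarking only that the result is ``completely analogous to the situation with continuous Sobolev inequalities,'' which is exactly the Federer--Fleming coarea argument (test functions $\chi_\Omega$ one way; layer-cake decomposition, the discrete coarea identity, and Minkowski's integral inequality the other way) that you carry out. Nothing further is needed.
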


In the following section we will need a good bound on the constant in the discrete Sobolev-Neumann inequality.
Suppose $(\mathcal{G},m)$ is a finite weighted graph with $|\mathcal{V}|=N$.
The \emph{spectral gap} $\lambda(\mathcal{G},m)$ is defined by
\begin{equation}\label{eq:spec-gap}
\lambda(\mathcal{G},m)=\inf\left\{\frac{\sum_{\{i,j\}\in\mathcal{E}}|f(i)-f(j)|^2 m(i,j)}{\sum_{i\in\mathcal{V}}|f(i)-m(f)|^2 m(i)}: f\in\R^N \setminus\{0\} \right\}.
\end{equation}
Clearly, if $S_d$ is the minimum constant so that the discrete Sobolev-Neumann $L^2$ inequality of order $\infty$
holds, then $S_d= 1/\lambda(\mathcal{G},m)$.

The \emph{Cheeger constant} $h(\mathcal{G},m)$ is defined by
\begin{equation}
h(\mathcal{G},m)=\inf\left\{\frac{\sum_{\{i,j\}\in\mathcal{E}}|f(i)-f(j)| m(i,j)}{\inf_c \sum_{i\in\mathcal{V}}|f(i)-c| m(i)}: f\in\R^N \right\}.
\end{equation}
It is well-known (cf.~\cite{Sal-Cos1}) that the Cheeger constant is equivalent to the isoperimetric inequality
\begin{equation}\label{eq:disc-isop-Neu}
h(\mathcal{G},m)=\inf\left\{\frac{m(\partial U)}{m(U)}: U\subset\mathcal{V}, 0<m(U)\leq\frac{1}{2}m(\mathcal{V})\right\}.
\end{equation}
And we can compare the constants $h(\mathcal{G},m)$ and $\lambda(\mathcal{G},m)$ by
\begin{equation}\label{eq:spect-Cheeger}
\frac{h^2(\mathcal{G},m)}{8m_0}\leq\lambda(\mathcal{G},m)\leq h(\mathcal{G},m),
\end{equation}
where
\[ m_0 :=\underset{i\in\mathcal{V}}{\max}\left\{\frac{1}{m(i)}\sum_{j\in\mathcal{V}}m(i,j)\right\}. \]

We state the Euclidean scale-invariant Sobolev-Poincar\'{e} inequalities.  They will be used in the proof of
Theorem~\ref{thm:Sobolev} and also in the proof of the scale invariant Poincar\'{e} inequality on $(X,g)$.
Let $B_r =B(o,r)\subset\R^n$ be the ball of radius $r>0$.  For $1\leq p\leq n$ and $n\leq\nu\leq\infty$ there exists
a constant $S_{p,\nu}$, only depending on $p,\nu$, so that
\begin{equation}\label{eq:Sobolev-loc}
\left(\int_{B_r}|f-f_r|^{\frac{p\nu}{\nu-p}} d\mu\right)^{\frac{\nu-p}{p\nu}} \leq S_{p,\nu} r^{1-\frac{n}{\nu}}\left(\int_{B_r}|\nabla f|^p d\mu\right)^{\frac{1}{p}},\quad\forall f\in C^{\infty}(B_r).
\end{equation}

We may assume that the end of $(X,g)$ has the conical metric.  That is, if $\rho$ is a radius function, then there
is an $r_0 \geq 2$ so that $\phi: X\setminus K_{r_0}\rightarrow C(S)\setminus\ol{D}_{r_0}$, with
$K_{r_0} =\{x\in X: \rho(x)\leq r_0 \}$ and $\ol{D}_{r_0} =\{x\in C(S): r(x)\leq r_0 \}$, is an isometry.
\begin{lem}\label{lem:Sobolev-anal}
Fix $R \geq r_0 ,\kappa >1$ and consider the annulus $A=A(R,\kappa R)= D_{\kappa R} \setminus D_{R}$.  Then if we let
$A_\delta$ be the $\delta R$-neighborhood of $A$, with $0<\delta<1$ sufficiently small, there is a constant
$C>0$ independent of $R$ so that
\begin{equation}\label{eq:Sobolev-anal}
\left(\int_A |f-f_A|^{\frac{n}{n-1}} d\mu\right)^{\frac{n-1}{n}} \leq C \int_{A_\delta}|\nabla f|d\mu,\quad\forall f\in C^{\infty}(A_\delta).
\end{equation}
\end{lem}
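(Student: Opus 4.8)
The plan is to use the exact conical structure of $g$ on the region $\{r>r_0\}$ together with the scale-invariance of (\ref{eq:Sobolev-anal}) under the dilations of the cone. Since $R\ge r_0$, both $A=A(R,\kappa R)$ and its neighborhood $A_\delta$ lie where $g=dr^2+r^2g_S$, and the exponents in (\ref{eq:Sobolev-anal}) are tuned so that the inequality is invariant under rescaling $r\mapsto r/R$. I would therefore reduce the whole family of inequalities, one for each $R$, to a single inequality on the fixed annulus $\tilde{A}:=A(1,\kappa)$, prove it there, and transport it back up the cone while checking that all powers of $R$ cancel.

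First I would introduce the cone dilation $\lambda_R:C(S)\to C(S)$, $\lambda_R(r,x)=(Rr,x)$, which satisfies $\lambda_R^*g=R^2g$; thus $\lambda_R$ maps $\tilde{A}$ onto $A$ and is a homothety scaling lengths by $R$. Because distances scale by $R$, the preimage $\lambda_R^{-1}(A_\delta)$ equals $\tilde{A}_\delta$, the $\delta$-neighborhood of $\tilde{A}$ in the cone metric, a fixed bounded domain independent of $R$. Given $f\in C^\infty(A_\delta)$, set $h:=f\circ\lambda_R\in C^\infty(\tilde{A}_\delta)$. The change of variables gives $\int_A F\,d\mu = R^n\int_{\tilde{A}}(F\circ\lambda_R)\,d\mu$, one has $(|\nabla f|\circ\lambda_R)=R^{-1}|\nabla h|$, and the means agree: $f_A=h_{\tilde{A}}$. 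Substituting, the left-hand side of (\ref{eq:Sobolev-anal}) becomes $R^{n-1}\bigl(\int_{\tilde{A}}|h-h_{\tilde{A}}|^{n/(n-1)}\,d\mu\bigr)^{(n-1)/n}$ and the right-hand side becomes $R^{n-1}\int_{\tilde{A}_\delta}|\nabla h|\,d\mu$. The common factor $R^{n-1}$ cancels, so (\ref{eq:Sobolev-anal}) with a constant independent of $R$ is equivalent to the single fixed-scale inequality
\[
\Bigl(\int_{\tilde{A}}|h-h_{\tilde{A}}|^{n/(n-1)}\,d\mu\Bigr)^{(n-1)/n}\le C\int_{\tilde{A}_\delta}|\nabla h|\,d\mu,\qquad\forall h\in C^\infty(\tilde{A}_\delta).
\]

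It remains to establish this inequality on the fixed domain $\tilde{A}_\delta$. With $S$ connected and $\delta$ small, $\tilde{A}_\delta$ is a bounded connected open subset of the smooth part of the cone with Lipschitz boundary, hence an extension domain, so the classical $L^1$ Neumann--Sobolev inequality $\|h-h_{\tilde{A}_\delta}\|_{L^{n/(n-1)}(\tilde{A}_\delta)}\le C_0\|\nabla h\|_{L^1(\tilde{A}_\delta)}$ holds; alternatively one can cover $\tilde{A}_\delta$ by finitely many coordinate balls, apply the Euclidean inequality (\ref{eq:Sobolev-loc}) with $p=1,\nu=n$ on each, and patch via Theorem~\ref{thm:cov-Sobolev-Neu}. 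To replace the mean $h_{\tilde{A}_\delta}$ by $h_{\tilde{A}}$ and to restrict the left integral from $\tilde{A}_\delta$ down to $\tilde{A}\subset\tilde{A}_\delta$, I would invoke the elementary bound $\|h-h_U\|_{L^q(U)}\le 2\|h-c\|_{L^q(U)}$, valid for every constant $c$, which shows any mean is comparable to the optimal constant and costs only harmless numerical factors. The one point needing genuine care --- and the main obstacle --- is choosing $\delta$ (depending on $\kappa$ and $S$ but not on $R$) so that $\tilde{A}_\delta$ is a domain on which the fixed Neumann--Sobolev inequality holds with a uniform $C_0$; once this is arranged at the unit scale, the scale-invariance established above yields the same constant $C$ for every $R\ge r_0$.
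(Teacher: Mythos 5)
Your proposal is correct and follows essentially the same route as the paper: the paper also proves the fixed-scale Neumann--Sobolev inequality on the annulus by covering it with small balls, applying the Euclidean inequality (\ref{eq:Sobolev-loc}) locally, and patching via Theorem~\ref{thm:cov-Sobolev-Neu}, and then obtains uniformity in $R$ from invariance of (\ref{eq:Sobolev-anal}) under the Euler homotheties $\psi_a$ of the cone. Rescaling first and proving the inequality at unit scale, as you do, is the same argument in the opposite order.
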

\begin{proof}
Set $s=\delta R$, and let $\{x_i\}_{i\in I}$ be a maximal subset of $A$ such that the distance between any two of its
elements is at least $s$.  Set $V_i =B(x_i ,s)$ and $V_i^* =V_i^{\#}=B(x_i,3s)$.  Then $(V_i,V_i^*,V_i^{\#})$ is
a finite good covering of $A$ in $A_\delta$.  Conditions (\ref{defn:gc-iii}) and (\ref{defn:gc-v}) are satisfied with
$Q_1 =\Card(I)$ and $Q_2 =\frac{\max(\mu(V_i^*) :i\in I)}{\min(\mu(V_i):i\in I)}$.  And in (\ref{defn:gc-iv}) we may
take $k(i,j)=i$.

We will apply Theorem~\ref{thm:cov-Sobolev-Neu}.  The exponential map gives coordinates $\phi_i :B_{s} \rightarrow V_i$
and $\phi_i^* :B_{3s}\rightarrow V_i^*$.  Then if $g_0$ is the flat metric on $B_{3s}$, by finiteness there is a $c>0$ so that $c^{-1}g_0 \leq (\phi_i^*)^* g\leq cg_0$ for all $i\in I$.

We will need the following which is easy to prove using the H\"{o}lder inequality.
Let $(U,\lambda)$ be a finite measure space, then
\begin{equation}\label{eq:aver-min}
\|f-f_{U,\lambda}\|_{L^q(U,\lambda)} \leq 2\underset{c\in\R}{\inf}\|f-c\|_{L^q(U,\lambda)}.
\end{equation}
Then (\ref{eq:Sobolev-loc}) with $p=1$ and $\nu=n$, (\ref{eq:aver-min}), and the uniform bound on $g$ imply that for
$V=V_i$ or $V_i^*$
\begin{equation}
\left(\int_V |f-f_{V,\mu}|^{\frac{n}{n-1}} d\mu\right)^{\frac{n-1}{n}} \leq 2 S_{1,n} c^n \int_V |\nabla f|d\nu.
\end{equation}
Thus the covering $(V_i,V_i^*,V_i^{\#})$ satisfies the continuous $L^1$ Sobolev inequality of order $\nu=n$.
It remains to show the discrete $L^1$ Sobolev-Neumann inequality.  But this follows because $\mathcal{G}$ is a
finite connected graph, and any two norms on a finite dimensional vector space are equivalent.
This proves (\ref{eq:Sobolev-anal}).  Then one can see that the same $C$ can be used in (\ref{eq:Sobolev-anal}) for
any $R\geq r_0$ by considering the Euler action $\psi_a :C(S)\rightarrow C(S), a>0,$ which acts by homotheties.
We have $\psi_a :A(R,\kappa R)\rightarrow A(aR,a\kappa R)=A(R',\kappa R')$.  And if $A'_\delta$ denotes the
$\delta R'$-neighborhood of $A(R',\kappa R')$, then $\psi_a :A_\delta \rightarrow A'_\delta$ is a homothetic diffeomorphism.  Since (\ref{eq:Sobolev-anal}) is invariant under homotheties, the lemma follows.
\end{proof}

\begin{proof}[Proof of Theorem~\ref{thm:Sobolev}]
Let $R>0$ and $\kappa>1$ be as above and define $R_i =\kappa^i R$.  We define a good covering of $(X,g)$.
Define $A_i =A(R_{i-1},R_i)$ for $i\geq 1$, and $A_0 =D_R =\{x\in X: \rho(x)<R\}$.  We let $A^*_i$ be the union of
all the $A_j$ whose closure intersects $\ol{A}_i$.  And similarly, $A^{\#}_i$ is the union of all the $A_j$ whose closure
intersects $\ol{A}^*_i$.

Lemma~\ref{lem:Sobolev-anal} shows that the continuous $L^1$ Sobolev inequality of order $n$ holds for the pairs
$(A_i,A^*_i)$ with $i\geq 1$, and for $(A^*_i,A^{\#}_i)$ with $i\geq 2$.  Since $A_0 =D_R$ is pre-compact one can
show that (\ref{eq:Sobolev-anal}) holds with $A=A_0$ using the same argument in Lemma~\ref{lem:Sobolev-anal}. Thus the
continuous $L^1$ Sobolev inequality holds for $(A_0,A^*_0)$, and the remaining cases are identical.

Proposition~\ref{prop:disc-isop} shows that the discrete $L^1$ Sobolev-Dirichlet inequality, of order $\infty$, holds if
\begin{equation}\label{eq:disc-isop}
\frac{m(\Omega)}{m(\partial\Omega)}\leq C,\quad\forall\Omega\subset\mathcal{V}\text{ with } m(\Omega)<\infty.
\end{equation}
Suppose $\Omega\subset\{0,\ldots,j\}$ with $j\in\Omega$.  Then
\[ \begin{split}
m(\Omega) & \leq\sum_{i=0}^j m(i) \\
          & =\mu(A_0) +\sum_{i=1}^j \mu(A(\kappa^{i-1}R,\kappa^i R))\\
          & =\mu(A_0)+ \mu(A_1)\sum_{i=1}^j \kappa^{n(i-1)}\\
          & =\mu(A_0) +\mu(A_1)\frac{\kappa^{nj} -1}{\kappa^n -1}
\end{split}\]
And
\[ \mu(\partial\Omega)\geq m(j+1)=\mu(A_1)\kappa^{nj}. \]
The isoperimetric inequality (\ref{eq:disc-isop}) holds with $C=\frac{\mu(A_0)}{\mu(A_1)} +\frac{1}{\kappa^n -1}$.
We then apply Theorem~\ref{thm:cov-Sobolev-Dir}, and the prove is complete.
\end{proof}

Let $V(x,r)$ be the volume of the geodesic ball $B(x,r)$ of radius $r$ centered at $x\in X$.  It is well known
(cf.~\cite{Sal-Cos2}) that (\ref{eq:Sobolev}) implies the following volume growth condition.
\begin{cor}\label{cor:Sobolev-vol}
There is a constant $c>0$, depending only on $n$ and $C$ in (\ref{eq:Sobolev}), so that $V(x,r)\geq cr^{n}$.
\end{cor}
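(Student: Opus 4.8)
The plan is to feed a one-parameter family of Lipschitz cutoff functions into the Sobolev inequality (\ref{eq:Sobolev}) and thereby convert it into a differential inequality for the volume function, which is then integrated. Fix $x\in X$ and abbreviate $v(r)=V(x,r)=\mu(B(x,r))$. For $r>0$ and small $\epsilon>0$ set
\[ f_{r,\epsilon}(y)=\begin{cases} 1 & d(x,y)\leq r,\\ 1-\tfrac{1}{\epsilon}\big(d(x,y)-r\big) & r<d(x,y)<r+\epsilon,\\ 0 & d(x,y)\geq r+\epsilon.\end{cases}\]
This function is compactly supported and Lipschitz, hence (after the usual mollification, and using that (\ref{eq:Sobolev}) extends from $C_0^\infty$ to compactly supported Lipschitz functions by density) an admissible test function, with $|\nabla f_{r,\epsilon}|\leq \tfrac{1}{\epsilon}$ supported on the annulus $B(x,r+\epsilon)\setminus B(x,r)$. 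Since $f_{r,\epsilon}\equiv 1$ on $B(x,r)$ we have $\|f_{r,\epsilon}\|_{n/(n-1)}\geq v(r)^{(n-1)/n}$, while $\|\nabla f_{r,\epsilon}\|_1\leq \tfrac{1}{\epsilon}\big(v(r+\epsilon)-v(r)\big)$.

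Inserting these two bounds into (\ref{eq:Sobolev}) gives
\[ v(r)^{\frac{n-1}{n}}\leq \frac{C}{\epsilon}\big(v(r+\epsilon)-v(r)\big). \]
Because $v$ is the volume of a geodesic ball, the coarea formula shows that $v$ is absolutely continuous with $v'(r)=\mathcal H^{n-1}(\partial B(x,r))$ for a.e.\ $r$; by Lebesgue differentiation the difference quotient converges to $v'(r)$ at a.e.\ $r$, so letting $\epsilon\to 0$ yields the differential inequality
\[ v(r)^{\frac{n-1}{n}}\leq C\,v'(r)\qquad\text{for a.e.\ }r>0. \]
Equivalently, one may observe that (\ref{eq:Sobolev}) is the isoperimetric inequality $\mu(\Omega)^{(n-1)/n}\leq C\,\mathrm{Per}(\Omega)$ and apply it directly to $\Omega=B(x,r)$, since $\mathrm{Per}(B(x,r))=\mathcal H^{n-1}(\partial B(x,r))=v'(r)$.

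It remains to integrate this inequality. On any interval $[r_0,r]$ with $r_0>0$ one has $v\geq v(r_0)>0$, so $t\mapsto t^{1/n}$ is Lipschitz on the relevant range and $w(r):=v(r)^{1/n}$ is absolutely continuous there. The chain rule and the differential inequality give $w'=\tfrac{1}{n}v^{(1-n)/n}v'\geq \tfrac{1}{nC}$ a.e., whence $w(r)-w(r_0)\geq (r-r_0)/(nC)$. Letting $r_0\to 0^+$ and using $w(r_0)=v(r_0)^{1/n}\to 0$ (small geodesic balls have volume tending to $0$), we obtain $w(r)\geq r/(nC)$, that is,
\[ V(x,r)=v(r)\geq \left(\frac{r}{nC}\right)^{n}. \]
This is the assertion with $c=(nC)^{-n}$, which depends only on $n$ and the Sobolev constant $C$.

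The only delicate point is the passage from the difference quotient to the derivative: one must know that $v$ is absolutely continuous, so that the integrated inequality is not spoiled by a singular part of the measure $dv$, and so that the degeneracy of $t\mapsto t^{1/n}$ at the origin is harmless (handled above by integrating from $r_0>0$ and letting $r_0\to 0$). Both facts follow from the coarea formula together with $|\nabla d(x,\cdot)|=1$ a.e.; this is the substantive analytic input, and the remainder is the elementary ODE comparison carried out above.
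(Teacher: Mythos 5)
Your argument is correct. The paper does not prove this corollary itself — it simply cites Saloff-Coste's book — and your proof (cutoffs in the $L^1$ Sobolev inequality giving the isoperimetric-type differential inequality $v(r)^{(n-1)/n}\leq C v'(r)$, then integration of $v^{1/n}$ using absolute continuity of $v$ from the coarea formula) is exactly the standard argument that the citation refers to, with the delicate points about the singular part of $dv$ and the degeneracy at $r=0$ properly handled.
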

\begin{remark}\label{rem:Sob}
The above arguments can easily be adapted to prove the Sobolev inequality (\ref{eq:Sobolev}) on an $\AC$ manifold with
multiple, but finitely many, ends.
\end{remark}

\subsection{Gaussian bound on the heat kernel}

Recall that the \emph{heat kernel} $h(t,x,y)$ is a smooth function on $\R_{>0}\times X\times X$ symmetric in $x,y$ which is the fundamental solution to the heat diffusion equation, $(\partial_t +\Delta_x)h=0$, with $\underset{t\rightarrow 0^+}{\lim}h(t,x,y)=\delta_x$.
In this section we prove that $h(t,x,y)$ satisfies a Gaussian bound.  First we need a definition.

\begin{defn}\label{defn:Poin}
Let $U\subset U'$ be relatively compact open subsets of $(X,g)$.  The \emph{Poincar\'{e} constant} of the pair
$(U,U')$ is the smallest positive number $\Lambda(U,U')$ such that
\begin{equation}
\int_U |f-f_U|^2 d\mu\leq\Lambda(U,U')\int_{U'} |\nabla f|^2 d\mu, \quad\forall f\in C^{\infty}(U').
\end{equation}
We say that $(X,g)$ satisfies a \emph{Poincar\'{e} inequality} with \emph{parameter} $0<\delta\leq 1$ if there is
a constant $C_P >0$ so that for any ball $B(x,r)$  
\begin{equation}\label{eq:Poin}
\Lambda(B(x,\delta r),B(x,r))\leq C_P r^2.
\end{equation}
\end{defn}

\begin{thm}\label{thm:para-Harn}
For a complete manifold $(X,g)$ the following are equivalent.
\begin{thmlist}
\item  $(X,g)$ satisfies the \emph{volume doubling} condition.  That is,\label{thm:item:vd-Poin}
there exists a constant $C_D >0$ so that for any ball $B(x,r)$
\begin{equation}\label{eq:vol-doub}
V(x,2r)\leq C_D V(x,r).
\end{equation}
And $(X,g)$ satisfies a Poincar\'{e} inequality with parameter $0<\delta\leq 1$.

\item  The \emph{heat kernel} $h(t,x,y)$ of $(X,g)$ satisfies the two-sided Gaussian bound\label{thm:item:Gauss}
\begin{equation}\label{eq:Gauss}
\frac{c_1 \exp(-C_1 d^2(x,y)/t)}{V(x,\sqrt{t})}\leq h(t,x,y)\leq\frac{C_2 \exp(-c_2 d^2(x,y)/ t)}{V(x,\sqrt{t})}.
\end{equation}
\end{thmlist}
\end{thm}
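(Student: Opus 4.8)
The plan is to recognize Theorem~\ref{thm:para-Harn} as the celebrated equivalence established independently by A. Grigor'yan and L. Saloff-Coste, in which the \emph{parabolic Harnack inequality} (PHI) furnishes the crucial intermediate characterization. I would therefore prove the two implications by passing through the PHI: namely (i) $\Rightarrow$ PHI $\Rightarrow$ (ii), and separately (ii) $\Rightarrow$ (i).

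For the forward direction I would first combine the volume doubling condition with the Poincar\'{e} inequality to run a Moser iteration. Volume doubling yields a relative volume comparison on nested balls, and together with the Poincar\'{e} inequality on $B(x,r)$ one obtains a scale-invariant family of local Sobolev inequalities. Moser's iteration scheme then produces parabolic mean-value inequalities bounding $\sup u$ on a sub-cylinder by an $L^2$ average of $u$ over a larger cylinder, both for subsolutions and, after the logarithmic change of variable, for supersolutions of $(\partial_t +\Delta_x)u=0$. A lemma of Bombieri--Giusti then glues these one-sided estimates into the full scale-invariant PHI: for any positive solution $u$ on a cylinder $Q=(s-r^2,s)\times B(x,r)$ one has $\sup_{Q_-}u\leq C\inf_{Q_+}u$ on suitable sub-cylinders $Q_-,Q_+$, with $C$ independent of $x$, $r$, $s$.

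Granting the PHI, the two-sided Gaussian bound (\ref{eq:Gauss}) follows by now-standard arguments. Applying the mean-value inequality to $h(t,x,\cdot)$ itself, together with volume doubling, gives the on-diagonal upper bound $h(t,x,x)\leq C/V(x,\sqrt{t})$; the off-diagonal Gaussian upper bound is then obtained from Grigor'yan's integral maximum principle, a Davies--Gaffney type $L^2$ estimate, combined with the on-diagonal bound. The Gaussian lower bound comes from chaining the Harnack inequality along a sequence of parabolic cylinders following a geodesic from $x$ to $y$, the number of steps being controlled by $d^2(x,y)/t$. For the reverse direction (ii) $\Rightarrow$ (i) I would deduce volume doubling by comparing the on-diagonal values $h(t,x,x)\sim 1/V(x,\sqrt{t})$ at scales $t$ and $4t$, using the semigroup identity $h(2t,x,x)=\int_X h(t,x,y)^2\,d\mu(y)$ together with both Gaussian bounds, and I would derive the Poincar\'{e} inequality from the heat-kernel bounds via the classical relation between the semigroup $e^{-t\Delta}$ and the associated Dirichlet form.

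The main obstacle is the Moser iteration establishing the PHI from volume doubling and the Poincar\'{e} inequality: one must track the scale-invariance of every constant through the iteration so that the final Harnack constant depends only on $n$, $C_D$, $C_P$, and $\delta$, and the Bombieri--Giusti lemma is required to convert the separate mean-value estimates for super- and subsolutions into a genuine two-sided oscillation bound. Since this is exactly the content of the Grigor'yan--Saloff-Coste theorem, in practice I would cite~\cite{GriSal-Cos} for these technical steps and concentrate the remaining verification on confirming that the two conditions of (i) hold for the asymptotically conical manifolds of interest.
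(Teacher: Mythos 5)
Your proposal follows exactly the route the paper takes: the paper does not prove this theorem directly but observes that (i) is equivalent to the parabolic Harnack inequality by Grigor'yan~\cite{Gri} and Saloff-Coste~\cite{Sal-Cos}, and that (ii) is equivalent to the parabolic Harnack inequality by Fabes--Stroock~\cite{FabStro}, citing~\cite{Sal-Cos2} for both. Your sketch of the intermediate machinery (Moser iteration, Bombieri--Giusti, on-diagonal bounds, Harnack chaining) is an accurate account of what those references contain, and your decision to cite rather than reprove is precisely what the paper does.
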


We also have the following time derivative estimates on $h(t,x,y)$ if the equivalent conditions in the theorem hold.
For any integer $k$,
\begin{equation}
|\partial_t^k h(t,x,y)|\leq \frac{A_k \exp(-c_2 d^2(x,y)/t)}{t^k V(x,\sqrt{t})},
\end{equation}
and there is an $\epsilon>0$ so that
\begin{equation}\label{eq:Gaus-der}
|\partial_t^k (t,x,y)-\partial_t^k h(t,x,z)|\leq \frac{d(y,z)^\epsilon \exp(-c_2 d(x,y)/t)}{t^{k+\frac{\epsilon}{2}} V(x,\sqrt{t})},
\end{equation}
for $d(y,z)\leq\sqrt{t}$.

\begin{remark}
It is known~\cite{Jer} that if a Poincar\'{e} inequality (\ref{eq:Poin}) holds for $0<\delta<1$ then it holds
for all $\delta\in (0,1]$.
\end{remark}

A. Grigor'yan~\cite{Gri} and L. Soloff-Coste~\cite{Sal-Cos} proved that (\ref{thm:item:vd-Poin}) is equivalent to a 
parabolic Harnack inequality.  The equivalence of (\ref{thm:item:Gauss}) and the parabolic Harnack inequality goes back to~\cite{FabStro}.
See also~\cite{Sal-Cos2} for proofs of both of these equivalences.

\begin{remark}\label{rem:vd-Poin-quasi}
It is not difficult to check that both the volume doubling condition and the existence of a Poincar\'{e} inequality
in Theorem~\ref{thm:para-Harn}.\ref{thm:item:vd-Poin} are invariant under quasi-isometry.  This will be used to
simplify the proofs below.
\end{remark}

The main result of this section is the following.
\begin{thm}\label{thm:Poin-con}
Suppose $(X,g)$ is $\AC$ or merely quasi-isometric to an $\AC$ metric.  Then the equivalent conditions of Theorem
\ref{thm:para-Harn} hold on $(X,g)$.
\end{thm}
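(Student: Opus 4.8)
The plan is to establish the first of the two equivalent conditions in Theorem~\ref{thm:para-Harn}, namely the volume doubling condition \eqref{eq:vol-doub} together with a scale-invariant Poincar\'{e} inequality \eqref{eq:Poin}; the Gaussian bound \eqref{eq:Gauss} then follows formally from the theorem. By Remark~\ref{rem:vd-Poin-quasi} both properties are invariant under quasi-isometry, so I would first replace $g$ by a quasi-isometric metric that is \emph{exactly} conical outside a compact core, i.e. assume that $X\setminus K_{r_0}$ is isometric to $C(S)\setminus\overline{D}_{r_0}$, as in the setup preceding Lemma~\ref{lem:Sobolev-anal}. The two estimates are then proved separately, and in both the essential mechanism is the homothety action $\psi_a$ of the cone together with the compactness of the cross-section $S$, which collapse all scales onto a single compact family of configurations.

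For volume doubling I would prove the two-sided bound $V(x,r)\asymp r^n$ for all $x\in X$ and $r>0$; this gives \eqref{eq:vol-doub} at once, since then $V(x,2r)\le C(2r)^n=2^n C r^n\le 2^n(C/c)V(x,r)$. The lower bound $V(x,r)\ge c r^n$ is already available from Corollary~\ref{cor:Sobolev-vol}, being a consequence of the Sobolev inequality of Theorem~\ref{thm:Sobolev}, which holds in this setting. For the matching upper bound I would argue on the cone: a ball with radial coordinate $\rho(x)=t$ satisfies $V(x,r)\le V(o,t+r)=\tfrac{1}{n}\Vol(S)(t+r)^n$, which is $\le Cr^n$ once $r\ge t$, while for $r\le t/2$ the ball lies in the annular region $\{t/2\le\rho\le 3t/2\}$ that, after rescaling by $\psi_{1/t}$, sits inside the fixed compact piece $\{1/2\le\rho\le 3/2\}$ of bounded geometry, giving $V(x,r)\asymp r^n$ uniformly. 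Since the core is compact of bounded geometry, its contribution to any ball is likewise $O(r^n)$ and is absorbed into the constant; combining the cases yields $V(x,r)\le C r^n$ throughout.

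For the Poincar\'{e} inequality I would follow the discretization scheme used for Theorem~\ref{thm:Sobolev}, but with the Neumann apparatus of Theorem~\ref{thm:cov-Sobolev-Neu} taken at $p=2$ and $\nu=\infty$, so that the continuous and discrete Sobolev inequalities of order $\infty$ become exactly Poincar\'{e} inequalities. For a fixed ball one builds a good covering of $B(x,\delta r)$ in $B(x,r)$ by balls $B(x_i,s)$ with $s\sim\delta r$; on each piece the continuous $L^2$ Poincar\'{e} inequality holds with a uniform constant of order $s^2\sim r^2$, via the scale-invariant Euclidean inequality \eqref{eq:Sobolev-loc} (with $p=2,\nu=\infty$) and the uniform comparison $c^{-1}g_0\le g\le cg_0$ in normal coordinates, exactly as in Lemma~\ref{lem:Sobolev-anal}. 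The homothety $\psi_a$ and the compactness of $S$ ensure that the overlap number $Q_1$, the comparison constant $Q_2$, and the piecewise constants are all bounded independently of $x$ and $r$; this is what produces the correct $r^2$ scaling in \eqref{eq:Poin}.

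The main obstacle will be the remaining input to Theorem~\ref{thm:cov-Sobolev-Neu}: a \emph{uniform} bound on the discrete $L^2$ Sobolev-Neumann constant $S_d=1/\lambda(\mathcal{G},m)$ of the covering graph, uniformly over all centres $x$ and radii $r$, and in particular for the large balls that engulf the compact core where the exact scaling symmetry degenerates (the rescaled core shrinks toward the apex). By \eqref{eq:spect-Cheeger} it suffices to bound $m_0$ above and the Cheeger constant below, and I would obtain the latter from the graph isoperimetric inequality \eqref{eq:disc-isop-Neu} by showing the covering graph is uniformly connected through chains of controlled length and overlap --- the combinatorial heart of the Grigor'yan--Saloff-Coste discretization --- using the already-established bound $V(x,r)\asymp r^n$ to control the vertex weights $m(i)=\mu(U_i)$ and hence $m_0$. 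Once this uniform spectral-gap bound is in hand, Theorem~\ref{thm:cov-Sobolev-Neu} delivers \eqref{eq:Poin} for some parameter $\delta\in(0,1)$, and Theorem~\ref{thm:para-Harn} completes the proof.
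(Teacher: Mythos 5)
Your overall strategy is the right one (establish volume doubling and the scale-invariant Poincar\'{e} inequality, then invoke Theorem~\ref{thm:para-Harn}), and the volume doubling half is essentially correct: the two-sided bound $V(x,r)\asymp r^n$, with the lower bound from Corollary~\ref{cor:Sobolev-vol} and the upper bound from the homothety $\psi_a$ and the anchored bound $V(o,s)\leq Cs^n$, is a legitimate repackaging of the paper's three-case argument in Lemma~\ref{lem:v-d}.

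The Poincar\'{e} half, however, has a genuine gap, and it is exactly the point you flag as ``the main obstacle'' without resolving. Your covering of $B(x,\delta r)$ in $B(x,r)$ uses pieces $B(x_i,s)$ of a \emph{single} scale $s\sim\delta r$. For a large anchored ball this fails already at the level of the \emph{continuous} input to Theorem~\ref{thm:cov-Sobolev-Neu}: a piece $B(x_i,\delta r)$ centered at a point with $\rho(x_i)\ll \delta r$ swallows the compact core and a large chunk of the cone, so there is no normal-coordinate comparison $c^{-1}g_0\leq g\leq cg_0$ at scale $\delta r$ there, and the claimed uniform piecewise Poincar\'{e} constant $\sim s^2$ is not available --- indeed a Poincar\'{e} inequality on such a piece is essentially the anchored-ball inequality you are trying to prove, so the argument is circular for those pieces. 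The discrete spectral-gap bound for these graphs is likewise not supplied. The paper's proof avoids both problems by a different decomposition: Lemma~\ref{lem:rem-anch} reduces everything to \emph{remote} and \emph{anchored} balls. Remote balls need no discretization at all (a single rescaled exponential chart plus (\ref{eq:Sobolev-loc}) suffices). Anchored balls $B(o,r)$ are covered by geometrically growing annuli $A_i=A(\kappa^{i-1}R,\kappa^iR)$; each annulus has width comparable to its distance from the apex, so the homothety rescales it to a fixed compact annulus where $\Lambda(A_i,A_i^*)\leq C R_i^2\leq C\ol{r}^2$ (this is where a finite good covering by small balls, as in Lemma~\ref{lem:Sobolev-anal}, is legitimately used), and the associated graph is a path whose isoperimetric constant is computed explicitly from the geometric growth $\mu(A_i)=\mu(A_1)\kappa^{n(i-1)}$. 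You should adopt this remote/anchored reduction and the annular discretization; without it, or without an alternative such as a Whitney-type covering with piece sizes adapted to $\rho(x_i)$ together with a genuine chain argument, the proof of (\ref{eq:Poin}) is incomplete.
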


We will prove Theorem~\ref{thm:Poin-con} by showing that $(X,g)$ satisfies the volume doubling (\ref{eq:vol-doub}) and
the Poincar\'{e} inequality (\ref{eq:Poin}).  We will need some definitions for the proof of Theorem~\ref{thm:Poin-con}.
\begin{defn}
Fix $o\in X$ and a parameter $0<\epsilon\leq 1$ (the \emph{remote parameter}).
\begin{thmlist}
\item  We say that a ball $B(x,r)$ is \emph{remote} if $r\leq\epsilon\frac{1}{2}d(o,x)$.
\item  We say that a ball $B(o,r)$ is \emph{anchored}.
\end{thmlist}
\end{defn}

\begin{lem}\label{lem:v-d}
Let $(X,g)$ be $\AC$.  Then $(X,g)$ satisfies the volume doubling condition (\ref{eq:vol-doub}).
\end{lem}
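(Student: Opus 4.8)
The plan is to reduce to an exactly conical end and then verify doubling separately for small balls, for remote balls, and for large non-remote balls. By Remark~\ref{rem:vd-Poin-quasi} volume doubling is a quasi-isometry invariant, so I may assume that $X\setminus K_{r_0}$ is isometric to the cone end $C(S)\setminus\ol{D}_{r_0}$ with $r_0\geq 2$. The $\AC$ structure then supplies bounded geometry: under the homotheties $\psi_a$ the curvature of the cone scales like $\rho^{-2}$ and the injectivity radius like $\rho$, while the core $K_{r_0}$ is compact, so there are global bounds $|Rm|\leq\Lambda_0$ and $\inj(X)\geq\iota_0>0$. I fix the remote parameter $\epsilon=\tfrac12$ and split the verification of \eqref{eq:vol-doub} according to the size and position of $B(x,r)$.

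For small balls, say $r\leq r_0$, bounded geometry alone yields a uniform doubling constant via the Bishop--Gromov and reverse volume comparison theorems, uniformly in $x$. For a remote ball, $r\leq\tfrac{\epsilon}{2}d(o,x)$ with $r>r_0$, one has $d(o,x)>4r_0$, so $B(x,2r)$ lies in the annular region $\{\tfrac12 d(o,x)\leq\rho\leq\tfrac32 d(o,x)\}$, which sits inside the exactly conical end. Here I would apply the homothety $\psi_a$ with $a=1/d(o,x)$: it scales $g$ by $a^2$ and the Riemannian measure by $a^n$, and carries $B(x,r)$ to a ball of radius at most $\tfrac14$ centered at $\rho=1$. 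All such rescaled balls lie in a single compact annulus of the cone on which the metric is smooth and nondegenerate, so they obey doubling with one uniform constant; since the ratio $V(x,2r)/V(x,r)$ is invariant under $\psi_a$, every remote ball inherits this constant.

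The remaining case is a large non-remote ball: $r>r_0$ and $r>\tfrac{\epsilon}{2}d(o,x)$, so that $d(o,x)<4r$. Here I would sandwich the volume between an anchored upper bound and the global Sobolev lower bound. By the triangle inequality $B(x,2r)\subset B(o,d(o,x)+2r)\subset B(o,6r)$, and the explicit conical computation $V(o,R)=C_{\mathrm{core}}+\tfrac{\Vol(S,g_S)}{n}(R^n-r_0^n)$ gives $V(o,R)\leq C'R^n$ for $R\geq r_0$; hence $V(x,2r)\leq V(o,6r)\leq C'6^n r^n$. On the other hand Corollary~\ref{cor:Sobolev-vol}, which rests on the Sobolev inequality of Theorem~\ref{thm:Sobolev}, gives $V(x,r)\geq c r^n$. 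Combining, $V(x,2r)\leq (C'6^n/c)\,V(x,r)$. Taking $C_D$ to be the maximum of the three constants produced above then proves \eqref{eq:vol-doub}.

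The main obstacle is organizational rather than analytic: I must choose the thresholds ($r_1=r_0$, $\epsilon=\tfrac12$) so that every ball falls into exactly one regime and, crucially, so that a remote ball with $r>r_0$ is pushed deep enough into the conical end for the homothety argument to apply without meeting the core. The genuinely substantive input is the interplay in the last case between the \emph{lower} volume bound $V(x,r)\geq cr^n$ coming from Sobolev and the elementary \emph{upper} bound for anchored balls on the cone; once these are in place, the scale invariance of $\psi_a$ handles the self-similar remote regime and bounded geometry handles the bottom scale.
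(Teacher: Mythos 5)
Your proof is correct and follows essentially the same route as the paper: reduce to an exactly conical end by quasi-isometry invariance, handle remote balls by rescaling with the Euler homothety to a fixed compact annulus, and treat the remaining non-remote balls by combining the anchored upper bound $V(o,R)\leq C'R^n$ with the Sobolev lower bound $V(x,r)\geq cr^n$ from Corollary~\ref{cor:Sobolev-vol}. The only differences are organizational (the paper builds the small-scale/remote case from exponential charts over a $\delta$-net rather than quoting Bishop--Gromov, and splits your final case into two), so nothing further is needed.
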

\begin{proof}
Per Remark~\ref{rem:vd-Poin-quasi} we may assume $(X,g)$ is conical outside a compact set.
We may assume that $\rho$ is a radius function and there
is an $r_0 \geq 2$ so that $\phi: X\setminus D_{r_0}\rightarrow C(S)\setminus K_{r_0}$, with
$D_{r_0} =\{x\in X: \rho(x)\leq r_0 \}$ and $K_{r_0} =\{x\in C(S): r(x)\leq r_0 \}$, is an isometry.

We first prove that volume doubling is satisfied for remote balls.
Choose $\delta<\inj(X,g)$ and $R>r_0$ so that $R-\delta >r_0$.  Let $\{x_i \}_{i\in I}$ be a maximal set of points in
$\ol{D}_R$ such that the distance between any two is at least $\delta$.
For each $i\in I$ let $\mathcal{B}_i \subset T B(x_i,\delta)$ be the radius $\delta$ disk bundle.
And fix an isomorphism $\beta_i :B_\delta \times B(x_i,\delta)\cong\mathcal{B}_i$ linear and preserving distances on the
fibers.  Then define maps $\psi_i : B_\delta \times B(x_i,\delta) \rightarrow X$ by $\psi_i (w,x)=\exp_x (\beta_i(w,x))$.
Let $g_i$ be the restriction of of $\psi_i^*g$ to the fibers on $B_\delta \times B(x_i,\delta)$.  By an easy compactness
argument it is easy to see that if $g_0$ is the flat metric on $B_\delta$, then $c^{-1}g_0 \leq g_i \leq cg_0$ for $c>1$ uniformly in $B(x_i,\delta)$ for each $i\in I$.  If $x\in\ol{D}_R$, then the above arguments show that $g$ in the chart
$\exp_x :B_\delta \rightarrow B(x,\delta)$ satisfies $c^{-1}g_0 \leq g \leq cg_0$.
Then one can show that the volume doubling condition holds for balls $B(x,r)$ with $x\in\ol{D}_R$ and
$r<\frac{1}{2}\delta$, i.e. there exists a $C>0$ so that $V(x,2r)\leq C V(x,r)$.

Now let $x\in X$ with $\rho(x)>R$.  Then by applying the Euler action $\psi_a :C(S)\rightarrow C(S)$ to the above
charts, we have the chart $\exp_x :B_{\frac{\rho(x)}{R}\delta} \rightarrow B(x,\frac{\rho(x)}{R}\delta)$ in which
$c^{-1}g_0 \leq g \leq cg_0$.  Thus we have volume doubling for balls $B(x,r)$ with $r<\frac{\rho(x)}{2R}\delta$.
Since $\rho\sim d(o,-)$, volume doubling holds for remote balls if we chose a small enough remote parameter $\epsilon>0$.

There is a constant $C_o >0$ so that $V(o,r)\leq C_o r^n$.
And by Corollary~\ref{cor:Sobolev} there is a constant $C>0$ so that $V(x,r)\geq Cr^n$ for any $x\in X$.

Choose $\epsilon$ to be the remote parameter from above.  Set $d(o,x)=\ell$.  We consider three cases.

\emph{Case 1:}  If $r\leq\frac{1}{2}\epsilon\ell$, then the ball $B(x,r)$ is remote and volume doubling holds.

\emph{Case 2:}  If $r\geq\frac{3}{2} \ell$, then we have
\[ V(x,2r)\leq V(o,\frac{8}{3}r)\leq C_o \frac{8^n}{3^n}r^n \leq\frac{8^n C_o}{3^n C} V(x,r).\]

\emph{Case 3:}  If $\frac{1}{2}\epsilon\ell\leq r\leq\frac{3}{2} \ell$, then
\[ V(x,2r)\leq V(o,8\ell)\leq C_o 8^n \ell^n \leq \frac{C_o}{C}\frac{16^n}{\epsilon^n}V(x,r).\]
\end{proof}

The proof of the following is straight forward.
\begin{lem}[\cite{GriSal-Cos}]\label{lem:rem-anch}
Given $o\in X$ suppose the Poincar\'{e} inequality holds for all remote and anchored balls with parameter
$0<\delta_0 \leq 1$ and constant $C_P >0$.  That is, for any remote or anchored ball $B(x,r)$
\[ \Lambda(B(x,\delta_0 r),B(x,r))\leq C_P r^2.\]
Then the Poincar\'{e} inequality holds for any ball with parameter $\delta=\epsilon\delta^2_0 /8$ and constant
$C_P>0$.  That is, for any ball $B(x,r)$
\[   \Lambda(B(x,\delta r),B(x,r))\leq C_P r^2.\]
\end{lem}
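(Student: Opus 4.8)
The plan is to carry the constant $C_P$ over unchanged by reducing the estimate for an arbitrary ball $B(x,r)$ to a \emph{single} application of the hypothesis on one well-chosen remote or anchored ball; no chaining and no volume-doubling input are needed, which is why the statement is ``straightforward'' and why $C_P$ is preserved. The mechanism is the following comparison. Suppose I can produce a remote or anchored ball $B(y,s)$ with $s\le r$ satisfying $B(x,\delta r)\subseteq B(y,\delta_0 s)$ and $B(y,s)\subseteq B(x,r)$. Taking $c=f_{B(y,\delta_0 s)}$ and using that $f_{B(x,\delta r)}$ minimizes $\int_{B(x,\delta r)}|f-c|^2\,d\mu$ over constants $c$,
\[ \int_{B(x,\delta r)}|f-f_{B(x,\delta r)}|^2\,d\mu\le\int_{B(x,\delta r)}|f-c|^2\,d\mu\le\int_{B(y,\delta_0 s)}|f-f_{B(y,\delta_0 s)}|^2\,d\mu, \]
where the last step merely enlarges the domain of a nonnegative integrand. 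Applying the hypothesis to $B(y,s)$ bounds this by $C_P s^2\int_{B(y,s)}|\nabla f|^2\,d\mu\le C_P r^2\int_{B(x,r)}|\nabla f|^2\,d\mu$, using $s\le r$ and $B(y,s)\subseteq B(x,r)$. This is exactly $\Lambda(B(x,\delta r),B(x,r))\le C_P r^2$, valid for all $f\in C^\infty(B(x,r))$ since every such $f$ restricts to $C^\infty(B(y,s))$.

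It then remains to exhibit such a ball for every $x,r$. Writing $\ell=d(o,x)$, I would split into two cases. In the \textbf{remote case} $\ell\ge \delta_0 r/4$, take $y=x$ and $s=\epsilon\delta_0 r/8$: then $s\le\tfrac{\epsilon}{2}\ell$ so $B(x,s)$ is remote, $s\le r$ since $\epsilon\delta_0/8\le 1$, the outer containment $B(x,s)\subseteq B(x,r)$ is immediate, and the inner containment is an equality because $\delta_0 s=\tfrac{\epsilon\delta_0^2}{8}r=\delta r$. In the \textbf{anchored case} $\ell\le\delta_0 r/4$, take $y=o$ and $s=r-\ell$, so $B(o,s)$ is anchored and $s\le r$; the triangle inequality gives $B(o,s)\subseteq B(x,\ell+s)=B(x,r)$ and $B(x,\delta r)\subseteq B(o,\ell+\delta r)$, so the inner containment $B(x,\delta r)\subseteq B(o,\delta_0 s)$ reduces to checking $\ell+\delta r\le\delta_0(r-\ell)$, i.e.\ $\ell(1+\delta_0)\le(\delta_0-\delta)r$.

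The one point requiring care---and the reason for the exponent and the factor $1/8$ in $\delta=\epsilon\delta_0^2/8$---is that the two cases must exhaust all values of $\ell$, i.e.\ the anchored case must cover the entire range $\ell\le\delta_0 r/4$ left over by the remote case. This is where I expect the only real work, and it is elementary bookkeeping: since $0<\delta_0\le 1$ gives $1+\delta_0\le 2$, we have $\ell(1+\delta_0)\le\tfrac{\delta_0 r}{4}\cdot 2=\tfrac{\delta_0 r}{2}$, while $\delta=\tfrac{\epsilon\delta_0^2}{8}\le\tfrac{\delta_0}{8}\le\tfrac{\delta_0}{2}$ forces $(\delta_0-\delta)r\ge\tfrac{\delta_0 r}{2}$, so $\ell(1+\delta_0)\le(\delta_0-\delta)r$ holds throughout the anchored range. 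Once this is verified the lemma follows at once from the comparison above, with the constant $C_P$ unchanged.
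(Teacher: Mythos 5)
Your proof is correct, and it is precisely the straightforward argument the paper has in mind (the paper omits the proof, citing Grigor'yan--Saloff-Coste): split on whether $d(o,x)\gtrless\delta_0 r/4$, nest $B(x,\delta r)\subseteq B(y,\delta_0 s)\subseteq B(y,s)\subseteq B(x,r)$ for a suitable remote or anchored ball $B(y,s)$, and use that the mean minimizes the $L^2$ distance to constants so that $C_P$ is preserved. Your bookkeeping even reproduces the stated value $\delta=\epsilon\delta_0^2/8$ exactly, which confirms this is the intended route.
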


\begin{proof}[Proof of Theorem~\ref{thm:Poin-con}.]
We will use the same notation used in the proof of Lemma~\ref{lem:v-d}.
From the proof of Lemma~\ref{lem:v-d} we have a remote parameter $\epsilon>0$ so that for $r\leq\epsilon\frac{1}{2}d(o,x)$
in the chart $\exp_x :B_\delta \rightarrow B(x,r)$ the metric $g$ satisfies $c^{-1}g_0 \leq g \leq cg_0$.
Then by the local Poincar\'{e} inequality, (\ref{eq:Sobolev-loc}) with $p=2, \nu=\infty$, there is a $C>0$ so that
\[ \int_{B(x,r)}|f-f_r|^{2} d\mu \leq\int_{B(x,r)}|f-f_{B_r ,dx}|^{2} d\mu \leq Cr^2 \int_{B(x,r)}|\nabla f|^2 d\mu, \quad\forall f\in C^{\infty}(B(x,r)),\]
for any remote ball $B(x,r)$, where $f_r$ is the average with respect to the $g$ volume $d\mu_g$.

Thus by Lemma~\ref{lem:rem-anch} it remains to prove the Poincar\'{e} inequality for anchored balls.
As in the proof of Lemma~\ref{lem:v-d} we may assume the end $X\setminus\ol{D}_{r_0}$ of $(X,g)$ is conical.
Fix $R>r_0$ and $\kappa>1$.  And choose $\delta>0$ so that $s=\delta R<\frac{1}{3}\inj(g)$.  Let $A=A(R,\kappa R)$ and
$A_\delta$ the $\delta R$-neighborhood of $A$.  Let $\{x_i \}_{i\in I}$ be a maximal subset of $A$ so that the
distance between any two elements is at least $s$.  Let $V_i =B(x_i ,s)$ and $V_i^* =V_i^{\#} =B(x_i ,3s)$.
Then $(V_i,V_i^*,V_i^{\#})_{i\in I}$ is a finite good cover of $A$ in $A_\delta$.
By uniformly bounding $g$ as above, there is a constant $S_c$ so that $\Lambda(V_i,V_i^*)\leq S_c$ and
$\Lambda(V_i^*,V_i^{\#})\leq S_c$.  In other words, the covering satisfies the continuous $L^2$ Sobolev inequality
of order $\nu=\infty$.  The associated graph $(\mathcal{G},m)$ if finite and connected, thus there is a $S_d >0$ so that
the discrete $L^2$ Sobolev-Neumann inequality of order $\infty$ holds.  Theorem~\ref{thm:cov-Sobolev-Neu}
gives a constant $S>0$ so that $\Lambda(A,A_\delta)\leq S$.  By considering the homothetic action
$\psi_a :A(R,\kappa R)_\delta \rightarrow A(R',\kappa R')_\delta$ with $R'=aR$ we see there is a constant $C>0$
independent of $R$ so that
\begin{equation}\label{eq:cont-Poin}
\Lambda(A,A_\delta)\leq CR^2, \quad\forall R>r_0.
\end{equation}

Let $R>r_0$ and $\kappa>1$ be as above.  Choose $\delta>0$ so that $R-r_0 >\delta R$, and set $R_i =\kappa^i R$.
By increasing $r_0$ if necessary, we may assume there is an $r_1 >0$ so that $D_{R_0}\subset B(o,r_1)\subset D_{R_1}$.
We define a covering with the following sets
\begin{gather}
A_0 =D_{R_0} \text{  and  } A_i =A(R_{i-1},R_i)\text{  for  }i\geq 1,\\
A^*_i =A_{i-1}\cup A_i \cup A_{i+1},\\
A^{\#}_i =A^*_{i-1}\cup A^*_i \cup A^*_{i+1},\\
\end{gather}
where we assume $A_i =\emptyset$ for $i<0$.  Choose the least $\ell\geq 1$ so that $B(o,r)\subset D_{R_{\ell}}$.
Then $\mathcal{A}=\{(A_i,A^*_i,A^{\#}_i )\}_{i=0}^\ell$ is a good covering of $B(o,r)$ in $D_{R_{\ell +2}}\subset B(o,\kappa^4 r)$.
If we set $\ol{r}=R_\ell$, then from (\ref{eq:cont-Poin}) we have $\Lambda(A_i,A^*_i)\leq C\ol{r}^2$ for $i\geq 1$
and $\Lambda(A^*_i,A^{\#}_i)\leq C\ol{r}^2$ for $i\geq 2$.

It remains to prove the discrete Sobolev-Neumann inequality.  We will show that there is a constant $c>0$
so that the spectral gap (\ref{eq:spec-gap}) satisfies $c<\lambda(\mathcal{G},m)$.  And by (\ref{eq:disc-isop-Neu}) and (\ref{eq:spect-Cheeger})
it suffices to show there is a $C>0$ independent of $\ell$ such that
\begin{equation}
m(U)\leq Cm(\partial U),\quad\forall U\subset\mathcal{V}\text{ such that }m(U)\leq\frac{1}{2}m(\mathcal{V}).
\end{equation}
Let $j:=\max\{i\in\mathcal{V}: (i,i+1)\in\partial(U)\}$.  Then
\[ \begin{split}
m([0,j]) &  =\sum_{i=0}^j m(i) \\
          & =\mu(A_0) +\sum_{i=1}^j \mu(A(\kappa^{i-1}R,\kappa^i R))\\
          & =\mu(A_0)+ \mu(A_1)\sum_{i=1}^j \kappa^{n(i-1)}\\
          & =\mu(A_0) +\mu(A_1)\frac{\kappa^{nj} -1}{\kappa^n -1}
\end{split}\]
Set $C=\frac{\mu(A_0)}{\mu(A_1)} +\frac{1}{\kappa^n -1}$.  Since either $U$ or $\mathcal{V}\setminus U$ is
contained in $[0,j]$,
\begin{multline}
m(U)\leq\min\{m(U),m(\mathcal{V}\setminus U)\}\leq m([0,j]) \\
\leq C\mu(A_1)\kappa^{nj}=Cm(j+1)\leq Cm(\partial U).
\end{multline}
By Theorem~\ref{thm:cov-Sobolev-Neu} we have $\Lambda(B(o,r),D_{R_\ell})\leq C\ol{r}^2$, for $r\geq r_1$ where
$C$ is independent of $\ol{r}$.  From (\ref{eq:loc-Poin}) there is a $C_1>0$ so that
$\Lambda(B(o,r),B(o,r))\leq C_1 r^2$ for $r\leq r_1$.
The proof is completed by observing that $d(o,-)\sim\rho$ on $X\setminus D_{R_0}$.
\end{proof}

\begin{remark}
Unlike Theorem~\ref{thm:Sobolev}, Theorem~\ref{thm:Poin-con} does not generalize to $\AC$ manifolds with more than
one end.  For example, it is known that the Poincar\'{e} inequality does not hold on a connected sum of Euclidean spaces
$\R^n \#\R^n$.  See~\cite{GriSal-Cos} for more information on this.
\end{remark}

\subsection{Laplacian on Asymptotically conical manifolds}\label{subsect:Lap}

We will need some properties of the Laplacian on an asymptotically conical manifold.  These will follow from some good
bounds on the Green's function which follow from Theorem~\ref{thm:para-Harn}.  First we state an elementary lemma whose
proof is an easy exercise
.
\begin{lem}\label{lem:lap}
Let $(X,g)$ be $\AC(\delta,k+\alpha), k\geq 1, 0<\alpha<1.$  Suppose $u\in C^2_{\beta}(X)$ and $v\in C^2_{\gamma}(X)$
where $\beta,\gamma\in\R$ with $\beta+\gamma <2-n$.  Then
\begin{equation}
\int_X u\Delta v d\mu =\int_X v\Delta u d\mu.
\end{equation}
If $\rho$ is a radius function of $(X,g)$, then $\Delta(\rho^{2-n}) \in C^{k-1,\alpha}_{\delta-n}(X)$.
And if $\Omega:=\Vol(S,g_S)$, where $S$ is the link in the conical end, then
\begin{equation}
\int_X \Delta(\rho^{2-n}) d\mu =(n-2)\Omega.
\end{equation}
\end{lem}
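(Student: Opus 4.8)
The lemma has three assertions, and I would treat them in order since the later ones reuse structure from the earlier ones. For the symmetry of the Laplacian, the plan is to integrate by parts on the exhaustion $D_R = \{\rho < R\}$ and control the boundary terms. Applying Green's identity on $D_R$ gives
\[
\int_{D_R} (u\Delta v - v\Delta u)\, d\mu = \int_{\partial D_R} \left(u\, \partial_\nu v - v\, \partial_\nu u\right)\, dA.
\]
The weighted H\"older hypotheses give $|u| = O(\rho^\beta)$, $|\nabla v| = O(\rho^{\gamma-1})$, and symmetrically, so each boundary integrand is $O(\rho^{\beta+\gamma-1})$. On $\partial D_R$ the hypersurface measure grows like $R^{n-1}$ (the end is conical), so the boundary term is $O(R^{\beta+\gamma-1+n-1}) = O(R^{\beta+\gamma+n-2})$. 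The condition $\beta+\gamma < 2-n$ forces the exponent to be negative, so letting $R\to\infty$ kills the boundary term and yields the claimed identity.

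For the second assertion, I would compute $\Delta(\rho^{2-n})$ directly. On the conical end the metric is exactly $\ol g = dr^2 + r^2 g_S$, and the radial function $r^{2-n}$ is harmonic there for the \emph{exact} cone Laplacian — this is the standard computation $\Delta_{\ol g}(r^{2-n}) = 0$ away from the apex. Since $(X,g)$ is $\AC(\delta, k+\alpha)$, on the end $g$ differs from the cone metric by a term in $C^{k,\alpha}_\delta$, and $\rho$ agrees with $r$ there. The point is that $\Delta_g - \Delta_{\ol g}$ is a second-order operator whose coefficients are built from $\phi_* g - \ol g$ and its first two derivatives, hence lie in $C^{k-1,\alpha}_\delta$; applying this difference to $r^{2-n}$, which scales like $\rho^{2-n}$ with two derivatives costing $\rho^{-2}$, produces a function decaying like $\rho^{\delta + (2-n) - 2} = \rho^{\delta - n}$. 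Combined with the fact that $\rho$ is merely a smooth extension near $K$ (so $\Delta(\rho^{2-n})$ is automatically smooth and compactly controlled there), this gives $\Delta(\rho^{2-n}) \in C^{k-1,\alpha}_{\delta-n}(X)$.

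The third assertion is the integral identity, and here the plan is again to use the exhaustion together with the divergence theorem, since $\Delta(\rho^{2-n}) = \operatorname{div}(\nabla \rho^{2-n})$. Integrating over $D_R$ and applying the divergence theorem gives $\int_{D_R}\Delta(\rho^{2-n})\,d\mu = \int_{\partial D_R}\partial_\nu(\rho^{2-n})\,dA$. On the end the metric is exactly conical, so $\rho = r$ and $\partial_r(r^{2-n}) = (2-n)r^{1-n}$, while the outward normal derivative is $\partial_r$ and $\partial D_R = \{r=R\}$ carries area $R^{n-1}\Omega$. Thus the boundary integral equals $(2-n)R^{1-n}\cdot(-R^{n-1}\Omega)$ — with the sign arranged so that the outward flux through the sphere at infinity gives $(n-2)\Omega$ — which is independent of $R$, and the previous part guarantees the integral over all of $X$ converges since $\delta - n < -n$ makes $\Delta(\rho^{2-n})$ integrable against $d\mu$. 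I expect the main obstacle to be \textbf{bookkeeping of the weight exponents and the growth of the boundary measure}: one must verify carefully that the conical structure really makes $d\mu|_{\partial D_R}$ scale as $R^{n-1}$ and that the decay rate $\delta - n$ (with $\delta < 0$) is fast enough to guarantee both the vanishing of boundary terms and the absolute convergence of the integral, rather than any single deep step.
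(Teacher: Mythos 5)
The paper leaves this lemma as an easy exercise, and your argument is the standard one it intends: Green's identity on the exhaustion $D_R=\{\rho<R\}$ with the exponent count $\beta+\gamma+n-2<0$ killing the boundary term, the comparison of $\Delta_g$ with the cone Laplacian applied to the cone-harmonic function $r^{2-n}$, and the divergence theorem for the flux computation, with the sign fixed by the paper's convention that $\Delta$ is the nonnegative Laplacian. The only caveats are cosmetic: the end is only asymptotically (not exactly) conical, so the boundary area and the flux carry $O(R^{\delta})$ relative errors that vanish as $R\to\infty$, and in the first assertion you should also record that the same exponent count makes $\int_X u\Delta v\,d\mu$ and $\int_X v\Delta u\,d\mu$ absolutely convergent to begin with.
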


Recall, the Green's function satisfies
\begin{equation}
\Delta_y G(x,y)=\delta_x(y), \quad\forall x\in X.
\end{equation}
Equivalently $G(x,y)$ satisfies
\begin{equation}
\int_X G(x,y)\Delta f(y)d\mu(y) =f(x),
\end{equation}
and
\begin{equation}
\Delta_x \int_X G(x,y)f(y)d\mu(y)=f(x),
\end{equation}
for any smooth compactly supported function $f$ on $X$.

If $(X,g)$ satisfies Theorem~\ref{thm:para-Harn}, then $(X,g)$ admits a positive symmetric Green's function if and only if
$\int^\infty V(x,\sqrt{t})^{-1}dt <\infty$.  And in this case
\[ G(x,y) =\int_0^\infty h(t,x,y) dt.\]

If $n>2$, then from Corollary~\ref{cor:Sobolev} and integrating (\ref{eq:Gauss}) we have for some $C>0$ depending only on $g$
\begin{equation}\label{eq:Green-bound}
\begin{split}
0< G(x,y) & \leq c\int_{d(x,y)^2}^\infty \frac{dt}{V(x,\sqrt{t})} \\
  &  \leq Cd(x,y)^{2-n},
\end{split}
\end{equation}
for all $x,y\in X$, where the second inequality uses Corollary~\ref{cor:Sobolev}.

Similarly, by integrating (\ref{eq:Gaus-der}) for $x\neq y$ and $d(y,z)\leq d(x,y)/2$ we have
\begin{equation}\label{eq:Green-bound-dif}
\begin{split}
\frac{|G(x,y)-G(x,z)|}{d(y,z)^\epsilon} & \leq c \int_{d(x,y)^2}^\infty \frac{dt}{t^\frac{\epsilon}{2} V(x,\sqrt{t})} \\
            & \leq C d(x,y)^{2-n-\epsilon},
\end{split}
\end{equation}
where again we have used Corollary~\ref{cor:Sobolev} in the second inequality.

If $f\in C^{k,\alpha}_{\beta}(X)$ with $k\geq 0,\alpha\in (0,1)$ and
$\beta<-2$, then (\ref{eq:Green-bound}) and standard regularity arguments show that
$u(x)=\int_X G(x,y)f(y)d\mu(y)$  is locally in $C^{k+2,\alpha}(X)$
and $\Delta u=f$.  We will extend these arguments to prove the following.

\begin{thm}\label{thm:lap-weight}
Suppose $(X,g)$ is $\AC(\delta,\ell+\alpha),\delta<-\epsilon, \ell\geq 0, \alpha\in (0,1)$ of dimension $n>2$.
Let $k\leq\ell$, then we have the following.
\begin{thmlist}
\item  Suppose $-n<\beta<-2$.  There exists a $C>0$ so that for each $f\in C^{k,\alpha}_\beta (X)$ there is a unique
$u\in C^{k+2,\alpha}_{\beta+2}(X)$ with $\Delta u=f$ which satisfies
$\|u\|_{C^{k+2,\alpha}_{\beta+2}}\leq C\|f\|_{C^{k,\alpha}_\beta}$.
\item  Suppose $-n-\epsilon <\beta<-n$.  There exist $C_1 ,C_2 >0$ such that for each $f\in C^{k,\alpha}_\beta (X)$ there is a
unique $u\in C^{k+2,\alpha}_{2-n}(X)$ with $\Delta u=f$.   Furthermore, if we define
\begin{equation}\label{eq:lap-int}
A=\frac{1}{(n-2)\Omega}\int_X f d\mu,
\end{equation}
where $\Omega=\Vol(S)$, then $u=A\rho^{2-n} +v$ with $v\in C^{k+2,\alpha}_{\beta +2}(X)$ satisfying
$|A|\leq C_1 \|f\|_{C^0_\beta}$ and $\|v\|_{C^{k+2,\alpha}_{\beta +2}}\leq C_2 \|f\|_{C^{k,\alpha}_\beta}$.
\end{thmlist}
\end{thm}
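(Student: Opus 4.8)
The plan is to construct the solution $u$ directly from the Green's function as $u(x)=\int_X G(x,y)f(y)\,d\mu(y)$ and then extract the decay and regularity from the bounds (\ref{eq:Green-bound}) and (\ref{eq:Green-bound-dif}). The first step is to verify that this integral converges and defines a function with the claimed regularity. Since $f\in C^{k,\alpha}_\beta$ decays like $\rho^\beta$ and $G(x,y)\leq Cd(x,y)^{2-n}$, convergence of the integral is a matter of splitting the domain into the region near $x$, an intermediate annulus, and the far region, estimating each piece against the volume growth $V(x,r)\leq C_o r^n$ together with the lower bound $V(x,r)\geq Cr^n$ from Corollary~\ref{cor:Sobolev-vol}. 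The local $C^{k+2,\alpha}$ regularity and the equation $\Delta u=f$ follow from standard elliptic regularity once convergence is established.

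\textbf{Decay estimates.}
The heart of the argument is the pointwise decay of $u$. For part (i) with $-n<\beta<-2$, I would fix $x$ with $\rho(x)=:\ell$ large and decompose the integral over the three regions $\{d(x,y)<\ell/2\}$, $\{\ell/2\leq d(x,y)\leq 2\ell\}$, and $\{d(x,y)>2\ell\}$. On the near region $\rho(y)\sim\ell$ so $|f(y)|\lesssim\ell^\beta$, and integrating $d(x,y)^{2-n}$ over a ball of radius $\ell/2$ contributes $\ell^{2+\beta}$; the far region contributes the tail $\int_{2\ell}^\infty r^{2-n}r^{\beta}r^{n-1}\,dr\sim\ell^{2+\beta}$, which converges precisely because $\beta<-2$. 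This yields $|u(x)|\lesssim\ell^{\beta+2}$, i.e.\ $u\in C^0_{\beta+2}$. Derivative bounds up to order $k+2$ come from the analogous decomposition using interior Schauder estimates applied on balls of radius $\sim\ell$, rescaling by the Euler homothety to transfer the flat-space estimates, exactly as in the weighted-space arguments of the ALE case; the Hölder seminorm bound uses (\ref{eq:Green-bound-dif}). Uniqueness in $C^{k+2,\alpha}_{\beta+2}$ follows because any harmonic function in $C^0_{\beta+2}$ with $\beta+2<0$ must vanish, by the maximum principle together with the barrier $\rho^{2-n}$ (which is superharmonic up to the controlled error $\Delta(\rho^{2-n})\in C^{k-1,\alpha}_{\delta-n}$ from Lemma~\ref{lem:lap}).

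\textbf{The critical range and the $\rho^{2-n}$ term.}
For part (ii) with $-n-\epsilon<\beta<-n$, the far-field tail no longer produces $\rho^{\beta+2}$ decay but rather matches the fundamental-solution rate $\rho^{2-n}$, so the naive estimate only gives $u\in C^0_{2-n}$. The correct refinement is to subtract the expected leading term: set $A=\frac{1}{(n-2)\Omega}\int_X f\,d\mu$, which is finite since $\beta<-n$ makes $f$ integrable, and bound $|A|\leq C_1\|f\|_{C^0_\beta}$ directly. Then I would write $v=u-A\rho^{2-n}$ and show $v\in C^{k+2,\alpha}_{\beta+2}$. The key computation is that $\Delta(A\rho^{2-n})$ approximates a delta mass of total weight matching $\int f$, by the normalization $\int_X\Delta(\rho^{2-n})\,d\mu=(n-2)\Omega$ from Lemma~\ref{lem:lap}; consequently $\Delta v=f-A\Delta(\rho^{2-n})$ has vanishing total integral and, crucially, faster effective decay, placing $v$ back in the range where the part-(i) estimate applies. \textbf{The main obstacle} is precisely this cancellation: establishing that subtracting the single multiple $A\rho^{2-n}$ kills the slowly-decaying monopole contribution in the far field and upgrades the decay of $v$ to $\rho^{\beta+2}$. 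This requires a careful far-field expansion of $\int_X(G(x,y)-C\,d(x,y)^{2-n})f(y)\,d\mu(y)$, controlling the error $\Delta(\rho^{2-n})\in C_{\delta-n}^{k-1,\alpha}$ coming from the asymptotically conical (rather than exactly conical) geometry, where the hypothesis $\delta<-\epsilon$ is used to ensure this error term decays fast enough to be absorbed.
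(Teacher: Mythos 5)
Your overall architecture coincides with the paper's: represent $u$ by the Green's function, obtain the $C^0_{\beta+2}$ bound from a three\mbox{-}region decomposition against the bound (\ref{eq:Green-bound}), upgrade to $C^{k+2,\alpha}_{\beta+2}$ by interior Schauder estimates on rescaled balls, and in the critical range subtract $A\rho^{2-n}$ so that the corrected datum has zero mean. Part (i), the bound $|A|\leq C_1\|f\|_{C^0_\beta}$, and the identity $\int_X\bigl(f-A\Delta(\rho^{2-n})\bigr)\,d\mu=0$ are all in order, and your uniqueness remark via the maximum principle is fine.

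The gap is in how you finish part (ii). You assert that $f-A\Delta(\rho^{2-n})$ has ``faster effective decay, placing $v$ back in the range where the part-(i) estimate applies.'' Neither half of this holds: since $A\Delta(\rho^{2-n})\in C^{k-1,\alpha}_{\delta-n}$ with $\delta-n<-n-\epsilon<\beta$, the correction decays \emph{faster} than $f$ and so cannot improve the decay of the sum, which still lies only in $C^{k,\alpha}_\beta$ with $\beta<-n$; and part (i) explicitly requires $\beta>-n$ --- applied to a datum with $\beta<-n$ the three-region estimate returns only $C'\rho(y)^{2-n}$ from the region $R_1$ near $o$, not $C'\rho(y)^{\beta+2}$. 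The step that actually closes this, and which the paper supplies, is a separate estimate for zero-mean data: because $\int_X\bigl(f-A\Delta(\rho^{2-n})\bigr)\,d\mu=0$ one may replace the kernel $G(y,x)$ by $G(y,x)-G(y,o)$ without changing the integral, and the H\"{o}lder continuity of the Green's function in its second argument, $|G(y,x)-G(y,o)|\leq C\,d(o,x)^{\epsilon}d(y,x)^{2-n-\epsilon}$ for $d(o,x)\leq d(y,x)/2$ (estimate (\ref{eq:Green-bound-dif}), itself derived from the Gaussian heat-kernel bounds), gains exactly the factor $d(o,x)^{\epsilon}d(y,x)^{-\epsilon}$ needed to turn the $R_1$ contribution into $C'\rho(y)^{\beta+2}$ for $\beta\in(-n-\epsilon,-n)$; this is also where the hypothesis $\beta>-n-\epsilon$ enters. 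Your proposed substitute --- a far-field expansion of $\int_X\bigl(G(x,y)-C\,d(x,y)^{2-n}\bigr)f(y)\,d\mu(y)$ --- subtracts the wrong object: no expansion of $G$ around the model kernel is available from the bounds established (only the one-sided bound (\ref{eq:Green-bound}) and the difference bound (\ref{eq:Green-bound-dif})), and that subtraction does not exploit the zero-mean hypothesis, which is the actual source of the improved decay. So the step you correctly flag as the main obstacle is left unresolved, and the mechanism you sketch would not resolve it.
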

\begin{proof}
We define
\begin{equation}\label{eq:lap-sol}
 u(y)=\int_X G(y,x)f(x)d\mu(x),
\end{equation}
and we first prove that $u\in C^0_{\beta +2}$.  We have
$|f(x)|\leq \|f\|_{C^0_{\beta}}\rho(x)^\beta$, and from (\ref{eq:lap-sol}) and (\ref{eq:Green-bound}) we have
\begin{equation}
|u(y)|\leq C \|f\|_{C^0_{\beta}} \int_X d(y,x)^{2-n} \rho(x)^\beta d\mu(x).
\end{equation}
Let $o\in X$ be a fixed point.  We split the integral into three regions
$R_1 =\{x\in X: 4d(o,x)\leq d(o,y)\}$, $R_2 = \{x\in X: \frac{1}{4}d(o,y)< d(o,x)<2d(o,y)\}$,
and $R_3 =\{x\in X: d(o,x)> 2d(o,y)\}$.  Estimating the integral over the three regions
gives the following:
\[\begin{split}
\int_{R_1} d(y,x)^{2-n} \rho(x)^\beta d\mu(x) & \leq
\begin{cases}
 C'\rho(y)^{\beta+2} & \text{if } \beta\in(-n,-2) \\
 C'\rho(y)^{2-n}     & \text{if } \beta <-n
\end{cases},\\
\int_{R_2} d(y,x)^{2-n} \rho(x)^\beta d\mu(x) & \leq\rho(y)^{\beta+2}, \text{ and }\\
\int_{R_3} d(y,x)^{2-n} \rho(x)^\beta d\mu(x) & \leq\rho(y)^{\beta+2}
\end{split}\]
This proves that $u\in C^0_{\beta +2}(X)$ in part (i).

We now consider part (ii).  If $\Delta u=f$ with $u\in C_{\beta+2}^{k+2,\alpha}(X)$ and $\beta\in (-n-\epsilon,-n)$,
then
\[ \int_X f d\mu = \int_X \Delta u d\mu =0 \]
by the arguments in Lemma~\ref{lem:lap}.  Thus for $f\in C_{\beta}^{k,\alpha}(X)$ there exists $u\in C_{\beta+2}^{k+2,\alpha}(X)$
solving $\Delta u=f$ only if $\int_X f d\mu =0$.  So suppose that $\int_X f d\mu =0$.  And
we replace (\ref{eq:lap-sol}) with
\begin{equation}\label{eq:lap-sol-mod}
 u(y)=\int_X [G(y,x)-G(y,o)]f(x)d\mu(x).
\end{equation}
Since $\int_X f d\mu =0$, this integral is equal to (\ref{eq:lap-sol}) and thus solves $\Delta u=f$.
From (\ref{eq:Green-bound-dif}) we have
\begin{equation}
|u(y)|\leq C \|f\|_{C^0_{\beta}} \int_X d(x,y)^{2-n-\epsilon} d(o,x)^\epsilon \rho(x)^\beta d\mu.
\end{equation}
And it is not difficult to show that
\[\int_{R_1} d(x,y)^{2-n-\epsilon} d(o,x)^\epsilon \rho(x)^\beta d\mu(x) \leq
\begin{cases}
 C'\rho(y)^{\beta+2} & \text{if } \beta\in(-n-\epsilon,-2) \\
 C'\rho(y)^{2-n-\epsilon}     & \text{if } \beta \leq -n-\epsilon
\end{cases}\]
This shows that $u\in C^0_{\beta+2}(X)$ if $\beta\in (-n-\epsilon,-n)$.

In both parts (i) and (ii) we have proved that $\|u\|_{C^0_{\beta+2}}\leq C'\|f\|_{C^0_{\beta}}$.
As remarked above, we have that $u$ is locally in $C^{k+2,\alpha}(X)$.  By taking an appropriate covering of $(X,g)$ using
the conical structure and applying the Schauder interior estimates one can show that there is a $C>0$ so that for
$\Delta u =f$ one has
\begin{equation}
\|u\|_{C_{\beta+2}^{k+2,\alpha}} \leq C\left(\|u\|_{C^0_{\beta+2}} +\|f\|_{C^{k,\alpha}_{\beta}} \right).
\end{equation}
Thus there is a $C>0$ so that $\|u\|_{C^{k+2,\alpha}_{\beta+2}}\leq C\|f\|_{C^{k,\alpha}_{\beta}}$.

We complete the proof of part (ii).  Define $A$ by (\ref{eq:lap-int}).  Then by Lemma~\ref{lem:lap} we have
$\int_X [f-A\Delta(\rho^{2-n})] d\mu =0$.  And also by Lemma~\ref{lem:lap} we have
$f-A\Delta(\rho^{2-n})\in C_{\beta}^{k,\alpha}(X)$.  Thus by what we have already proved there is a
$v\in C^{k+2,\alpha}_{\beta+2}(X)$ with $\Delta v= f-\Delta(A\rho^{2-n})$ with
\begin{equation}\label{eq:lap-bound}
\|v\|_{C^{k+2,\alpha}_{\beta+2}} \leq C\left(\|f\|_{C^{k,\alpha}_\beta} +|A|\|\Delta(\rho^{2-n})\|_{C^{k,\alpha}_\beta}\right).
\end{equation}
Note that
\begin{equation}
\begin{split}
|A|\leq \frac{1}{(n-2)\Omega} \int_X |f|\, d\mu & \leq \frac{\|f\|_{C^0_\beta}}{(n-2)\Omega} \int_X \rho^\beta \, d\mu \\
   & \leq C_1 \|f\|_{C^0_\beta},
\end{split}
\end{equation}
where $C_1 =\frac{1}{(n-2)\Omega}\int_X \rho^\beta \, d\mu$ is finite since $\beta\leq -n$.  And this combined with
(\ref{eq:lap-bound}) completes the proof.
\end{proof}

\section{K\"{a}hler case}

Asymptotically conical K\"{a}hler manifolds will now be considered.  We will begin with some definitions and preliminary
results.  In particular, the link $S$ in a K\"{a}hler cone is far from arbitrary.  It is a \emph{Sasaki} manifold which
can be thought of as an odd dimensional analogue of a K\"{a}hler manifold.  We also consider some Hodge theory which will 
be useful late.

\subsection{Background}

\begin{defn}\label{defn:Sasaki}
A $2m-1$-dimensional Riemannian manifold $(S,g)$ is \emph{Sasaki} if the metric cone $(C(S),\ol{g})$,
$C(S)=\R_{>0} \times S,\ \ol{g}=dr^2 +r^2 g,$ is K\"{a}hler.
\end{defn}
\begin{remark}
This is the succinct definition of a Sasaki manifold.  They were originally defined as a manifold carrying a special type of metric
contact structure.  For more on Sasaki manifolds see the monograph~\cite{BG3}.
\end{remark}

It follows from the definition that the Euler vector field $r\partial_r$ acts holomorphically, i.e. $\mathcal{L}_{r\partial_r} J=0$.
It is also not difficult to show that $\xi=Jr\partial_r$ is a Killing vector field which restricts to
$S=\{r=1\}\subset C(S)$.  Thus $\xi +ir\partial_r$ is a holomorphic vector field on $C(S)$.  The restriction of $\xi$ to
$S$ is the \emph{Reeb vector field} of $(S,g)$.  Sasaki manifolds can be distinguished by the action of the Reeb vector field $\xi$.
If $\xi$ generates a free action of $U(1)$ then the Sasaki structure is \emph{regular}.  The Sasaki structure is
\emph{quasi-regular} if the orbits close but there are non-trivial stablizers.  If the orbits do not close, then the Sasaki
structure is \emph{irregular}.

Let $\eta$ be the dual 1-form to $\xi$ with respect to $g$, that is $\eta =\frac{1}{r^2}\xi\contr\ol{g}$.  Then one can check
\begin{equation}\label{eq:contact}
\eta = -J^* \frac{dr}{r} =2d^c \log r,
\end{equation}
where $d^c =\frac{i}{2}(\ol{\partial} -\partial)$.  The restriction of $\eta$ to $S$ is a contact form with Reeb vector field $\xi$.

Since $\mathcal{L}_{r\partial_r} J=0$, the K\"{a}hler form $\omega$ satisfies
\begin{equation}
\begin{split}
2\omega  & = \mathcal{L}_{r\partial_r}\omega \\
         & = d(r\partial_r \contr\omega) \\
         & = d(r^2 \eta) \\
         & = dd^c(r^2), \quad \text{using (\ref{eq:contact})}.
\end{split}
\end{equation}
Thus the K\"{a}hler form $\omega$ on $C(S)$ has potential $\frac{1}{2}r^2$.

We are in particular interested in Ricci-flat K\"{a}hler cones.  The following easily follows from the warped product
structure of $\ol{g}$.
\begin{prop}
Let $(S,g)$ be a $2m-1$-dimensional Sasaki manifold.  Then the following are equivalent.
\begin{thmlist}
\item  $(S,g)$ is Sasaki-Einstein with Einstein constant $2n-2$.
\item  $(C(S),\ol{g})$ is Ricci-flat K\"{a}hler.
\end{thmlist}
\end{prop}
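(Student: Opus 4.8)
The plan is to reduce the statement to a purely Riemannian fact about the cone metric $\ol{g}=dr^2+r^2 g$, since the K\"{a}hler condition on $(C(S),\ol{g})$ is already built into the hypothesis that $(S,g)$ is Sasaki. Thus the only content to verify is that $\ol{g}$ is Ricci-flat if and only if $g$ is Einstein with Einstein constant $2m-2=\dim S-1$. I would obtain this from the standard warped-product curvature formulas, viewing $C(S)=B\times_f F$ with base $B=(\R_{>0},dr^2)$, fibre $F=(S,g)$, and warping function $f=r$.

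First I would record the relevant warped-product identities for the Ricci tensor. With $d=\dim F=2m-1$, and with $X,Y$ tangent to the base and $V,W$ tangent to the fibre, these read
\begin{equation*}
\Ric_{\ol g}(X,Y)=\Ric_{g_B}(X,Y)-\tfrac{d}{f}\,\nabla df(X,Y),\qquad \Ric_{\ol g}(X,V)=0,
\end{equation*}
\begin{equation*}
\Ric_{\ol g}(V,W)=\Ric_g(V,W)-\Bigl(\tfrac{\Delta f}{f}+(d-1)\tfrac{|\nabla f|^2}{f^2}\Bigr)\ol g(V,W).
\end{equation*}
Next I would specialize to $f=r$ on the one-dimensional flat base $(\R_{>0},dr^2)$. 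Here $\Ric_{g_B}=0$, while $f=r$ gives $\nabla df=0$, $\Delta f=0$, and $|\nabla f|^2=1$. Substituting, the radial components vanish identically, $\Ric_{\ol g}(\partial_r,\partial_r)=\Ric_{\ol g}(\partial_r,V)=0$, and on fibre directions
\begin{equation*}
\Ric_{\ol g}(V,W)=\Ric_g(V,W)-(d-1)\tfrac{1}{r^2}\,\ol g(V,W)=\Ric_g(V,W)-(2m-2)\,g(V,W),
\end{equation*}
using $\ol g(V,W)=r^2 g(V,W)$ for $V,W$ tangent to $S$. Consequently $\Ric_{\ol g}\equiv 0$ precisely when its only possibly nonzero part vanishes, i.e.\ when $\Ric_g=(2m-2)g$, which is exactly the Sasaki--Einstein condition with the asserted constant.

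I do not anticipate any real obstacle; the argument is routine once the warped-product formulas are in hand. The only points requiring care are fixing the sign and normalization conventions in those formulas and correctly computing the derivatives of the warping function $f=r$, in particular that its Hessian $\nabla df$ and its Laplacian $\Delta f$ with respect to the flat base both vanish. Alternatively, one could bypass the citation by computing the Christoffel symbols of $\ol g$ directly and reading off the Ricci tensor; this yields the same fibrewise relation $\Ric_{\ol g}=\Ric_g-(2m-2)g$ together with the vanishing of all radial components, and the equivalence follows in the same way.
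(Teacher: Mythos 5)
Your proof is correct and follows exactly the route the paper intends: the paper offers no details beyond the remark that the claim ``easily follows from the warped product structure of $\ol{g}$,'' and your warped-product computation (with $f=r$, flat one-dimensional base, fibre dimension $d=2m-1$) is precisely the content being invoked, yielding $\Ric_{\ol g}=\Ric_g-(2m-2)g$ on fibre directions and zero radial components. Note in passing that the Einstein constant $2n-2$ in the statement is evidently a typo for $2m-2$, which your argument correctly identifies.
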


Of course, a necessary condition for $(C(S),\ol{g})$ to be Ricci-flat K\"{a}hler is that $\mathbf{K}_{C(S)}^\ell$
must be trivial for some positive integer $\ell>0$.  But for $C(S)$ to admit a Ricci-flat K\"{a}hler cone metric with the
given Reeb vector field $\xi$ one must require a little more.
\begin{prop}\label{prop:CY-cond}
A necessary condition for $C(S)$ to admit a Ricci-flat K\"{a}hler cone metric with the same complex structure $J$ and
Reeb vector field $\xi$ is that $\mathbf{K}_{C(S)}^\ell$, for some integer $\ell\geq 1$, admits a nowhere vanishing section
$\Omega$ with $\mathcal{L}_\xi \Omega =im\Omega$.
\end{prop}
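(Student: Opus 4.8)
The plan is to produce a nowhere-vanishing holomorphic volume form on $C(S)$ out of the Ricci-flat metric and then read off its weight under $\xi$ by a scaling argument. First I would note that for a K\"ahler metric the curvature of the Chern connection on the canonical bundle $\mathbf{K}_{C(S)}$ is, up to sign, the Ricci form $\rho$; thus Ricci-flatness, $\rho=0$, makes $\mathbf{K}_{C(S)}$ a flat Hermitian holomorphic line bundle. Such a bundle admits local parallel holomorphic sections, i.e. covariantly constant holomorphic $(m,0)$-forms $\Omega$, and these obey the Monge-Amp\'ere relation $\omega^m = c\, i^{m^2}\,\Omega\wedge\ol\Omega$ with $c$ a positive constant. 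The transitions between these local sections lie in $U(1)$ and define the flat monodromy $\pi_1(C(S))\to U(1)$; replacing $\mathbf{K}_{C(S)}$ by the power $\mathbf{K}_{C(S)}^\ell$ that annihilates this monodromy, the local forms glue to a global nowhere-vanishing holomorphic section, whose underlying volume form I continue to denote $\Omega$.

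Next I would compute the weight of $\Omega$ under the Euler field. Since $r\partial_r$ is real holomorphic ($\mathcal{L}_{r\partial_r}J=0$) and $\Omega$ is a nowhere-vanishing holomorphic $(m,0)$-form, we may write $\mathcal{L}_{r\partial_r}\Omega = h\,\Omega$ for a holomorphic function $h$. The relation $\mathcal{L}_{r\partial_r}\omega = 2\omega$ derived above gives $\mathcal{L}_{r\partial_r}\omega^m = 2m\,\omega^m$, and feeding this through the Monge-Amp\'ere relation yields $\mathcal{L}_{r\partial_r}(\Omega\wedge\ol\Omega) = 2m\,\Omega\wedge\ol\Omega$, hence $h+\ol h = 2m$, i.e. $\operatorname{Re} h = m$. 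A holomorphic function with constant real part is constant, so $h = m + i b$ for some real constant $b$. To pass to $\xi$ I would use the elementary identity $\mathcal{L}_{JX}\Omega = i\,\mathcal{L}_X\Omega$, valid for any real holomorphic field $X$ acting on a holomorphic $(m,0)$-form (immediate in local holomorphic coordinates, where only the $(1,0)$-part of $X$ contributes). With $X=r\partial_r$ and $\xi = Jr\partial_r$ this gives $\mathcal{L}_\xi\Omega = i h\,\Omega = (im - b)\,\Omega$.

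It remains to show $b=0$. Here I would invoke that $\xi$ is Killing and preserves $J$, so $\mathcal{L}_\xi\omega = 0$ and therefore $\mathcal{L}_\xi\omega^m = 0$. Since $\xi$ is real, $\mathcal{L}_\xi\ol\Omega = \ol{\mathcal{L}_\xi\Omega} = (-im - b)\,\ol\Omega$, whence $\mathcal{L}_\xi(\Omega\wedge\ol\Omega) = (\mathcal{L}_\xi\Omega)\wedge\ol\Omega + \Omega\wedge\mathcal{L}_\xi\ol\Omega = -2b\,\Omega\wedge\ol\Omega$. Comparing with $\mathcal{L}_\xi\omega^m = 0$ through the Monge-Amp\'ere relation forces $b=0$, and hence $\mathcal{L}_\xi\Omega = im\,\Omega$, which is the asserted necessary condition.

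The step I expect to be the real obstacle is the global existence of $\Omega$, that is, the passage to a finite power $\mathbf{K}_{C(S)}^\ell$: the weight computation is purely local and robust, but gluing the local parallel forms into a single-valued section requires the $U(1)$-monodromy of the flat bundle $\mathbf{K}_{C(S)}$ to have finite order. I would deduce this finiteness from the homogeneity just established---the volume form is an eigenform of integer weight $m$ for the Euler action, which constrains the monodromy around loops in the link $S$ to a finite cyclic group. This is transparent in the regular and quasi-regular cases, where it follows from the closing of the Reeb orbits, while the irregular case needs a slightly more careful argument using the closure of the torus generated by $\xi$.
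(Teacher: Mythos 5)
The paper states this proposition without proof (it is standard background, cf.~\cite{BG3}), so your argument can only be judged on its own terms. The core of it is correct and is the standard route: Ricci-flatness makes the Chern connection on $\mathbf{K}_{C(S)}$ flat with unitary holonomy, the local parallel holomorphic $(m,0)$-forms satisfy $\omega^m = c\, i^{m^2}\Omega\wedge\ol{\Omega}$, and the weight computation --- $\mathcal{L}_{r\partial_r}\Omega = h\Omega$ with $h$ holomorphic, $\operatorname{Re}h = m$ forced by $\mathcal{L}_{r\partial_r}\omega = 2\omega$, hence $h$ constant, then $\mathcal{L}_{\xi}\Omega = ih\,\Omega$ and the imaginary part of $h$ killed by $\mathcal{L}_\xi\omega = 0$ --- is clean and complete.

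The genuine gap is exactly where you suspected it: the finiteness of the monodromy character $\pi_1(C(S))\to U(1)$, which is what you need to pass to a trivializing section of $\mathbf{K}_{C(S)}^{\ell}$. Your proposed remedy does not work. The fact that $\Omega$ is an eigenform of $\mathcal{L}_{r\partial_r}$ and $\mathcal{L}_\xi$ with fixed weight constrains the holonomy only along orbits of the Reeb/Euler flow; it says nothing about the holonomy around a general loop in the link $S$, and $H_1(S;\Z)$ could a priori have free part mapping onto an infinite subgroup of $U(1)$. Even for a closed Reeb orbit of period $T$ the holonomy you get is $e^{imT}$, which is not obviously a root of unity, and in the irregular case the orbits do not close at all, so the "slightly more careful argument" you defer is really the whole issue. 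The correct (and standard) way to close this is the preceding proposition of the paper: a Ricci-flat K\"{a}hler cone metric is equivalent to $(S,g_S)$ being Einstein with Einstein constant $2m-2>0$ (for $m\geq 2$), so Myers' theorem gives $\pi_1(S)$ finite; since $C(S)\simeq S$ up to homotopy, every homomorphism $\pi_1(C(S))\to U(1)$ has finite cyclic image, and taking $\ell$ to be its order makes $\mathbf{K}_{C(S)}^{\ell}$ holomorphically trivial with parallel trivializing section. (A minor normalization point, present in the paper's own statement as well: the section of $\mathbf{K}_{C(S)}^{\ell}$ obtained by tensoring has weight $i\ell m$ rather than $im$; the displayed relation between $\Omega\wedge\ol{\Omega}$ and $\omega^m$ is literally meaningful only for $\ell=1$.)
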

If the condition in the proposition holds, then $\Omega$ satisfies
\begin{equation}\label{eq:CY-cond}
\frac{i^{m^2}}{2^m}\Omega\wedge\ol{\Omega}=\frac{1}{m!} e^h \omega^m ,
\end{equation}
where $h\in C^\infty(C(S))$ is basic, meaning that $\xi h=r\partial_r h =0$.  Note that the Ricci form is given by
$\Ric(\omega)=dd^c h$ and is zero precisely when $h$ is constant.

\begin{remark}
\textit{A priori} a K\"{a}hler cone $C(S)$ does not contain the vertex.  But it can be proved that $C(S)\cup\{o\}$, with the vertex $o$,
is an affine variety.  See~\cite{vC4} for a proof of the relevant embedding theorem.
\end{remark}

We now define $\AC$ K\"{a}hler manifolds.
\begin{defn}\label{defn:AC-Kaehler}
Let $(C(S),g_0)$ be a K\"{a}hler cone.  Then we say that a K\"{a}hler manifold $(X,g)$ is asymptotically conical of order
$(\delta, k+\alpha)$, asymptotic to $(C(S), g_0)$,
if there is compact subset $K\subset X$, a compact neighborhood $o\in K_0 \subset C(S)$, and a diffeomorphism
$\phi: X\setminus K \rightarrow C(S)\setminus K_0$ so that
\begin{align}
|\phi_* g -g_0 | \in C_{\delta} ^{k,\alpha} &\quad\text{on } C(S)\setminus K_0 \quad\text{and}\label{eq:AC-K-met}\\
|\phi_* J -J_0 | \in C_{\delta} ^{k,\alpha} &\quad\text{on } C(S)\setminus K_0 .\label{eq:AC-K-J}
\end{align}
We will denote this by K\"{a}hler $\AC(\delta, k+\alpha)$. 
\end{defn}

In many cases the end of $(X,g)$ will be holomorphically a cone.  In this case $\phi: X\setminus K \rightarrow C(S)\setminus K_0$
is a biholomorphism.  And one can show that in this case $\phi$ extends to $\phi:X\rightarrow C(S)\cup\{o\}$, which is therefore
a resolution of $C(S)$.

\subsubsection{Hodge theory}

We review some Hodge theory that will be needed.  In particular, a Hodge decomposition will be needed in a proof
of a weighted version of the $\partial\ol{\partial}$-lemma.

We define the space of $L^2$ harmonic forms, where we assume $g$ is quasi-isometric to a $\AC$ metric,
\begin{equation}
L^2 \mathcal{H}^p (X,g):=\{\eta\in L^2(\Lambda^p X) : \Delta\eta =0\}.
\end{equation}
One can show that $L^2 \mathcal{H}^p (X,g)=\{\eta\in L^2(\Lambda^p X) : d\eta=d^* \eta =0\}$.
If $\eta\in L^2 \mathcal{H}^p (X,g)$, then $\eta\in L^2 _k(\Lambda^p X)$ for all $k\geq 0$.  So
$L^2 \mathcal{H}^p (X,g)$ consists of smooth forms.  Furthermore, one can show that it is finite dimensional, and
if $\tilde{g}$ is quasi-isometric to $g$, as in (\ref{eq:quasi-isom}), there is a natural isomorphism
$L^2 \mathcal{H}^p (X,\tilde{g})\cong L^2 \mathcal{H}^p (X,g)$.  See~\cite{Loc} for proofs of these statements.

The $L^2$ harmonic spaces have been computed in our context.  For the following, note that an $\AC$ manifold
$X$ can be compactified with boundary $\partial X =S$.  And $H^*(X,S)$ is isomorphic to the compactly supported
cohomology $H_c^*(X)$.
\begin{thm}[\cite{HauHunMaz}]\label{thm:cohom}
Let $(X,g)$ be a manifold of dimension n which is $\AC$ up to quasi-isometry.  Then we have the natural isomorphisms
\begin{equation}
L^2 \mathcal{H}^p (X,g) \cong\begin{cases}
H^p(X,S), & p<n/2, \\
\im\bigl(H^p(X,S)\rightarrow H^p(X)\bigr), & p=n/2, \\
H^p(X), & p>n/2.
\end{cases}
\end{equation}
\end{thm}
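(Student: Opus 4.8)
The plan is to identify $L^2\mathcal{H}^p(X,g)$ with the reduced $L^2$-cohomology and then to compute the latter from the cone end, following the scheme of Cheeger and of~\cite{HauHunMaz}. First I would invoke the Hodge--Kodaira--de Rham theorem for complete manifolds: on any complete $(X,g)$ the space of $L^2$ harmonic $p$-forms is naturally isomorphic to the reduced $L^2$-cohomology $\overline{H}^p_{(2)}(X,g)$. Since $L^2\mathcal{H}^p(X,g)$ is a quasi-isometry invariant (noted above, with proofs in~\cite{Loc}) and the right-hand side of the theorem is purely topological, I may assume that $g$ equals the exact cone metric $dr^2+r^2 g_S$ on the end, and decompose $X=X_0\cup_S E$ with $X_0$ a compact manifold with boundary $\partial X_0=S$ and $E=\{r\ge R\}\subset C(S)$.

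Second I would analyze harmonic forms on $E$ by separation of variables. Decomposing forms on $C(S)$ by the eigenforms of the Hodge Laplacian $\Delta_S$ on the link and writing a candidate as $\alpha=a(r)\,\phi+b(r)\,dr\wedge\psi$, the equation $\Delta\alpha=0$ reduces to Euler-type ODEs in $r$ whose solutions are powers $r^\lambda$; the admissible exponents (the indicial roots) form a discrete set governed by the spectrum of $\Delta_S$ on $q$- and $(q-1)$-forms. The decisive bookkeeping is the $L^2$ threshold: since the metric scales the link directions, a contribution $r^\lambda\phi$ with $\phi$ a $q$-form on $S$ has pointwise norm $\sim r^{\lambda-q}$, so $\int^\infty|\alpha|^2\,r^{n-1}\,dr<\infty$ forces $\lambda<q-\tfrac n2$. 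Thus only those link-cohomology classes whose homogeneous extensions decay sufficiently fast survive in $L^2$, and the cutoff sits precisely at the middle dimension.

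Third I would feed this into the weighted Fredholm theory for $d+d^*$ (as in~\cite{Loc}): on the weighted spaces $r^\delta L^2$ the operator $d+d^*$ is Fredholm for every weight $\delta$ off the discrete set of indicial roots, and the dimension of its kernel jumps only as $\delta$ crosses such a root, the jump being a contribution from $H^*(S)$. Identifying the genuine $L^2$ harmonic forms with the weight $\delta=0$ and combining the cone computation with the long exact sequence of the pair (recalling $H^*(X,S)\cong H_c^*(X)$) together with Poincar\'e--Lefschetz duality, one matches the stable image with $H^p(X,S)$ for $p<n/2$ and, dually under the Hodge star $\ast: L^2\mathcal{H}^p\cong L^2\mathcal{H}^{n-p}$, with $H^p(X)$ for $p>n/2$.

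The hard part will be the critical degree $p=n/2$. There the closed extension $\pi^*\phi$ of a harmonic middle-degree form $\phi$ on $S$ has radial exponent $\lambda=0$, which lands exactly on the threshold $\lambda<q-\tfrac n2=0$: such a form is bounded but only \emph{logarithmically} fails to be $L^2$. Deciding which of these borderline extended solutions actually represents an $L^2$ harmonic class, rather than a closed-and-coclosed form carrying a non-$L^2$ tail, is precisely what replaces the full group by the image $\im\bigl(H^{n/2}(X,S)\to H^{n/2}(X)\bigr)$. This requires a careful pairing argument at the middle degree, and it is where the analysis is most delicate.
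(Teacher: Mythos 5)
The paper does not prove this statement: it is quoted directly from Hausel--Hunsicker--Mazzeo \cite{HauHunMaz}, so there is no in-paper argument to compare yours against. Your sketch follows the standard route of that reference specialized to exact cone ends --- identification of $L^2\mathcal{H}^p$ with reduced $L^2$-cohomology, quasi-isometry invariance, separation of variables on the cone with the correct $L^2$ cutoff $\lambda < q - \tfrac{n}{2}$ for the indicial roots, weighted Fredholm theory for $d+d^*$, and the borderline analysis at $p=n/2$ that produces $\im\bigl(H^{n/2}(X,S)\rightarrow H^{n/2}(X)\bigr)$ --- and the computations you do carry out are correct, though the identification of the stable kernel with $H^p(X,S)$ via the long exact sequence of the pair and the middle-degree pairing argument are flagged rather than executed.
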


Recall the Kodaira decomposition theorem, which for arbitrary manifolds gives the orthogonal decomposition
\begin{equation}\label{eq:Kodaira}
L^2(\Lambda^p(X,g)) = L^2 \mathcal{H}^p(X,g) \oplus\overline{dC_0^\infty(\Lambda^{p-1})}\oplus \overline{d^* C_0^\infty(\Lambda^{p+1})},
\end{equation}
where the closure in the last two summands is in $L^2$.

We need a more precise decomposition than (\ref{eq:Kodaira}).  Assume from now on that $(X,g)$ is $\AC(\delta,\ell+\alpha)$
with $\delta<0$ and $\ell\geq 2$.   A difficulty in improving (\ref{eq:Kodaira}) is that the operator
\begin{equation}\label{eq:Lap-forms}
\Delta: L^2_{k+2,\beta} (\Lambda^p X)\longrightarrow L^2_{k,\beta-2} (\Lambda^p X)
\end{equation}
is not Fredholm for arbitrary $\beta\in\R$.  The kernel of (\ref{eq:Lap-forms}) is finite dimensional and the closure of the
range has finite codimension.  The difficulty is that the range is not always closed.
It is a result of~\cite{LocMcO} that there is a discrete set $\mathcal{D}_\Delta \subset\R$ so that (\ref{eq:Lap-forms})
is Fredholm precisely when $\beta\in\R\setminus\mathcal{D}_\Delta$.
But one can define a Banach space $\tilde{L}^2_{k+2,\beta} (\Lambda^p X)$  so that
\begin{equation}\label{eq:Lap-forms-mod}
\Delta: \tilde{L}^2_{k+2,\beta} (\Lambda^p X)\longrightarrow L^2_{k,\beta-2} (\Lambda^p X)
\end{equation}
is Fredholm, and the range of (\ref{eq:Lap-forms-mod}) is the closure of the range of (\ref{eq:Lap-forms}).

For each $\tau\geq\beta$ with $\tau\in\R\setminus\mathcal{D}_\Delta$ let $\tilde{B}_\tau$ be the closure of
$L^2_{k+2,\beta} (\Lambda^p X)$ in
\begin{equation}
B_\tau =\{\eta \in L^2_{k+2,\tau} (\Lambda^p X) : \Delta\eta\in L^2_{k,\beta-2} (\Lambda^p X)\}
\end{equation}
with respect to the norm
\begin{equation}
\|\eta\|_{B_\tau} =\|\eta\|_{L^2_{k+2,\tau}} +\|\Delta\eta\|_{L^2_{k,\beta-2}}.
\end{equation}
When $\tilde{B}_\tau$ is equipped with this norm one can show~\cite{Loc} that
\begin{equation}
\Delta: \tilde{B}_\tau \longrightarrow L^2_{k,\beta-2} (\Lambda^p X)
\end{equation}
is Fredholm with range equal to the closure of the range of (\ref{eq:Lap-forms}).  One can also show that
all the $\tilde{B}_\tau$ are isomorphic Banach spaces.  We define $\tilde{L}^2_{k+2,\beta} (\Lambda^p X)$
to be any one of the $\tilde{B}_\tau$.  In particular, we have
$\tilde{L}^2_{k+2,\beta} (\Lambda^p X)=L^2_{k+2,\beta} (\Lambda^p X)$ with equivalent norms if
$\beta\in\R\setminus\mathcal{D}_\Delta$.  And in general
\begin{equation}
\tilde{L}^2_{k+2,\beta} (\Lambda^p X) \subset\underset{\tau>\beta}{\bigcap}L^2_{k+2,\tau} (\Lambda^p X).
\end{equation}

By our conventions we have $L^2_{0,-m}(\Lambda^p X)=L^2(\Lambda^p X)$ with equal norms.  Thus consider
\begin{equation}\label{eq:Lap-forms-L2}
\Delta: \tilde{L}^2_{2,2-m} (\Lambda^p X)\longrightarrow L^2_{0,-m} (\Lambda^p X)=L^2(\Lambda^p X).
\end{equation}
The cokernel of (\ref{eq:Lap-forms-L2}) is $L^2 \mathcal{H}^p (X,g)$, so we have the following decomposition
refining (\ref{eq:Kodaira}).
\begin{thm}\label{thm:Hodge}
Suppose $(X,g)$ is $\AC(\delta,\ell+\alpha)$ with $\delta<0$ and $\ell\geq 2$.  Then we have
\begin{equation}
L^2(\Lambda^p X) =L^2 \mathcal{H}^p (X,g)\oplus dd^*\bigl(\tilde{L}^2_{2,2-m} (\Lambda^p X) \bigr)\oplus d^*d\bigl(\tilde{L}^2_{2,2-m} (\Lambda^p X) \bigr).
\end{equation}
In particular, if $\eta\in L^2(\Lambda^p X)$, then we have the unique $L^2$ decomposition
$\eta=\sigma +d\zeta +d^* \theta$, where $\zeta\in L^2_{1,\delta}(\Lambda^{p-1} X)$ for all $\delta>1-m$ and
$\theta\in L^2_{1,\delta}(\Lambda^{p+1} X)$ for all $\delta>1-m$.
\end{thm}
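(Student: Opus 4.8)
The plan is to deduce the decomposition from the Fredholm property of the map (\ref{eq:Lap-forms-L2}) cited above. Since $\Delta$ on $\tilde{L}^2_{2,2-m}(\Lambda^p X)$ is formally self-adjoint and the space $L^2\mathcal{H}^p(X,g)$ of $L^2$ harmonic forms lies in its kernel, the identification of the cokernel of (\ref{eq:Lap-forms-L2}) with $L^2\mathcal{H}^p(X,g)$, together with the closedness of the range (Fredholmness), yields the $L^2$-orthogonal splitting
\[
L^2(\Lambda^p X)=L^2\mathcal{H}^p(X,g)\oplus\Delta\bigl(\tilde{L}^2_{2,2-m}(\Lambda^p X)\bigr),
\]
the second summand being the closed range. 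This is the structural heart of the statement, and the remaining work is to split the range into its exact and coexact pieces and to verify orthogonality.

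First I would refine the range. Given $\eta\in L^2(\Lambda^p X)$, write $\eta=\sigma+\Delta\alpha$ with $\sigma\in L^2\mathcal{H}^p(X,g)$ and $\alpha\in\tilde{L}^2_{2,2-m}(\Lambda^p X)$, and expand $\Delta\alpha=dd^*\alpha+d^*d\alpha$. Setting $\zeta=d^*\alpha$ and $\theta=d\alpha$ gives $\eta=\sigma+d\zeta+d^*\theta$. The decay of $\zeta,\theta$ is read off from the inclusion $\tilde{L}^2_{2,2-m}(\Lambda^p X)\subset\bigcap_{\tau>2-m}L^2_{2,\tau}(\Lambda^p X)$ recorded above: since $d$ and $d^*$ are first order and lower the weight by one, $\zeta\in L^2_{1,\delta}(\Lambda^{p-1}X)$ and $\theta\in L^2_{1,\delta}(\Lambda^{p+1}X)$ for every $\delta>1-m$, exactly as claimed.

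Next I would show that the three subspaces $L^2\mathcal{H}^p(X,g)$, $dd^*(\tilde{L}^2_{2,2-m})$ and $d^*d(\tilde{L}^2_{2,2-m})$ are mutually $L^2$-orthogonal, which yields at once the directness of the sum, the uniqueness of the decomposition, and (combined with the previous paragraph) that the three subspaces span $L^2(\Lambda^p X)$. For $\alpha,\beta\in\tilde{L}^2_{2,2-m}$ one has $\langle dd^*\alpha,d^*d\beta\rangle=\langle d(dd^*\alpha),d\beta\rangle=0$ by $d^2=0$, and for $\sigma\in L^2\mathcal{H}^p$, using the characterization $d\sigma=d^*\sigma=0$ established above, $\langle\sigma,dd^*\alpha\rangle=\langle d^*\sigma,d^*\alpha\rangle=0$ and $\langle\sigma,d^*d\alpha\rangle=\langle d\sigma,d\alpha\rangle=0$. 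Each identity uses an integration by parts, and all three subspaces then assemble into the asserted orthogonal direct sum.

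The main obstacle is analytic and concentrated at the borderline weight $2-m=2-n/2$, which is precisely the exceptional $L^2$ cutoff lying in $\mathcal{D}_\Delta$; this is why the modified space $\tilde{L}^2_{2,2-m}$, rather than $L^2_{2,2-m}$, is essential. At this decay rate the integrations by parts above are delicate: one must verify that the boundary contributions over the spheres $\{\rho=R\}$ vanish as $R\to\infty$, and that $dd^*\alpha$ and $d^*d\alpha$ genuinely lie in $L^2(\Lambda^p X)$ rather than merely in the larger weighted spaces. Both points follow from the decay estimates and the Fredholm construction of $\tilde{L}^2_{2,2-m}$ in~\cite{Loc}; I would carry them out by a cutoff argument, multiplying by $\chi_R$ supported in $\{\rho\le 2R\}$ and equal to $1$ on $\{\rho\le R\}$, and letting $R\to\infty$ while controlling the error through the $\bigcap_{\tau>2-m}L^2_{2,\tau}$ membership of $\alpha$.
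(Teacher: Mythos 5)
Your proposal is correct and follows essentially the same route as the paper, which derives the theorem directly from the preceding remark that the map $\Delta: \tilde{L}^2_{2,2-m}(\Lambda^p X)\rightarrow L^2(\Lambda^p X)$ is Fredholm with cokernel $L^2\mathcal{H}^p(X,g)$, then splits $\Delta\alpha=dd^*\alpha+d^*d\alpha$ and reads off the weights from $\tilde{L}^2_{2,2-m}\subset\bigcap_{\tau>2-m}L^2_{2,\tau}$. You supply more detail than the paper does (the orthogonality via cutoff integration by parts, correctly flagged as the delicate borderline-weight step handled in~\cite{Loc}), but the underlying argument is the same.
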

Of course when $(X,g)$ is K\"{a}hler the decomposition in Theorem~\ref{thm:Hodge} respects the decomposition into types
$\Lambda^p(X) \otimes\C =\oplus_{r+s=p} \Lambda^{r,s}(X)$ as usual because $\Delta=2\Delta_{\ol{\partial}}$.

We now prove a weighted version of the $\partial\ol{\partial}$-lemma.
\begin{prop}\label{prop:dd-bar}
Let $(X,g)$ be K\"{a}hler $\AC(\delta, \ell +\alpha)$ with $\delta<0,\ \ell\geq 2$, and $H^1(S,\R)=0$.
Suppose $\beta\in(-2m,-m)$ and
$\eta\in C_\beta^{k,\alpha}(\Lambda^{1,1}_{\R} X)$, with $0\leq k\leq\ell$, is a closed real $(1,1)$-form with $[\eta]=0$ in
$H^2(X,\R)$.  Then there exists a unique real function $u\in C_{\beta+2}^{k+2,\alpha}(X)$ with $dd^c u=\eta$.
\end{prop}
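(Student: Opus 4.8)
The plan is to reduce the tensorial equation $dd^c u=\eta$ to a single scalar Poisson equation, solve that with the Laplacian theory of Section~\ref{subsect:Lap}, and then show that the resulting error is forced to vanish. Throughout write $n=2m$ and assume $m\geq 2$ (the case $m=1$ admits only the trivial primitive $(1,1)$-form). Recall that with $d^c=\tfrac i2(\ol{\partial}-\partial)$ one has $dd^c=i\partial\ol{\partial}$, and that on a K\"{a}hler manifold the trace $\Lambda_\omega$ with respect to the K\"{a}hler form $\omega$ satisfies $\Lambda_\omega(dd^c u)=c\,\Delta u$ for a nonzero universal constant $c$ (using $\Delta=2\Delta_{\ol{\partial}}$). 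First I would set $f:=\Lambda_\omega\eta$; since contraction with $g^{-1}$ preserves weighted H\"{o}lder decay, $f$ is a real function in $C^{k,\alpha}_\beta(X)$. Because $\beta\in(-2m,-m)\subset(-n,-2)$, Theorem~\ref{thm:lap-weight}(i) applies and produces a unique real $u\in C^{k+2,\alpha}_{\beta+2}(X)$ with $\Delta u=c^{-1}f$, normalized so that $\Lambda_\omega(dd^c u)=f=\Lambda_\omega\eta$.

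Next I would study the error form $\psi:=\eta-dd^c u$. By construction $\psi$ is a real $(1,1)$-form, it is closed (both $\eta$ and $dd^c u$ are), and it is primitive, $\Lambda_\omega\psi=0$. Since $u\in C^{k+2,\alpha}_{\beta+2}$ gives $dd^c u\in C^{k,\alpha}_\beta$, we have $\psi\in C^{k,\alpha}_\beta$; as $\beta<-m=-n/2$ this forces $\psi\in L^2(\Lambda^2 X)$. Now I would invoke the Weil identity: a primitive $(1,1)$-form on a K\"{a}hler $m$-fold satisfies $*\psi=-\tfrac{1}{(m-2)!}\omega^{m-2}\wedge\psi$, whence $d*\psi=-\tfrac1{(m-2)!}\omega^{m-2}\wedge d\psi=0$ and therefore $d^*\psi=0$ (for $k=0$ these identities are read distributionally, and elliptic regularity then shows $\psi$ is smooth, as noted in the Hodge subsection). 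Thus $\psi$ is closed and co-closed in $L^2$, i.e.\ $\psi\in L^2\mathcal{H}^2(X,g)$.

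The final step is to kill $\psi$ cohomologically, and this is where the hypothesis $H^1(S,\R)=0$ is indispensable. Since $dd^c u$ is exact and $[\eta]=0$ in $H^2(X,\R)$, the de Rham class $[\psi]$ vanishes in $H^2(X)$. By Theorem~\ref{thm:cohom}, for $p=2<n/2$ the harmonic form $\psi$ is identified with a class in $H^2(X,S)\cong H_c^2(X)$, and the long exact sequence of the pair,
\[
\cdots\to H^1(X)\to H^1(S)\to H^2(X,S)\xrightarrow{\,j^*\,}H^2(X)\to\cdots,
\]
gives $\ker j^*=\im\bigl(H^1(S)\to H^2(X,S)\bigr)$; as $H^1(S,\R)=0$ this kernel is trivial, so $j^*$ is injective. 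In the borderline case $p=n/2$ (i.e.\ $m=2$) injectivity of $L^2\mathcal{H}^2\to H^2(X)$ is immediate from the stated image description. Since $j^*[\psi]=[\psi]_{\mathrm{dR}}=0$, we conclude $\psi=0$, hence $dd^c u=\eta$. Uniqueness follows from Theorem~\ref{thm:lap-weight}(i): if $dd^c u_1=dd^c u_2=\eta$ with both potentials in $C^{k+2,\alpha}_{\beta+2}$, then $w=u_1-u_2$ satisfies $\Delta w=c^{-1}\Lambda_\omega(dd^c w)=0$ with $w\in C^{k+2,\alpha}_{\beta+2}$ and weight $\beta\in(-n,-2)$, so the uniqueness clause of that theorem gives $w=0$. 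I expect the cohomological step to be the main obstacle: showing that the primitive harmonic $L^2$ representative $\psi$ is genuinely zero rather than merely exact in $H^2(X)$, since the map $L^2\mathcal{H}^2\to H^2(X)$ is injective only once the contribution of $H^1(S)$ is ruled out, which is exactly the content of the assumption $H^1(S,\R)=0$.
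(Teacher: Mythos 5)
Your proof is correct, and it reaches the conclusion by a genuinely different mechanism in the key step. Both you and the paper begin identically: take the trace to get a scalar $f\in C^{k,\alpha}_\beta$, solve $\Delta u=f$ by Theorem~\ref{thm:lap-weight}(i), and reduce to showing that the closed, primitive, $L^2$ error form $\psi=\eta-dd^cu$ vanishes; and both ultimately kill $\psi$ using the same cohomological input, namely that $H^1(S,\R)=0$ forces $H^2(X,S)\to H^2(X)$ to be injective so that (via Theorem~\ref{thm:cohom}) $L^2\mathcal{H}^2(X,g)$ contains no nonzero exact forms. Where you diverge is in how $\psi$ is placed into $L^2\mathcal{H}^2$: the paper invokes the refined Hodge decomposition of Theorem~\ref{thm:Hodge} to write $\psi=\sigma+d\zeta$ with $\zeta\in L^2_{1,\delta}$, discards $\sigma$ cohomologically, and then runs an approximation/integration-by-parts argument against the pointwise identity $\psi\wedge\psi\wedge\omega^{m-2}=-\tfrac{1}{2m(m-1)}|\psi|^2\omega^m$ to conclude $\int_X|\psi|^2=0$; you instead observe via the Weil identity $*\psi=-\tfrac{1}{(m-2)!}\omega^{m-2}\wedge\psi$ that a closed primitive $(1,1)$-form is automatically coclosed, hence $L^2$-harmonic, and then apply the injectivity of the de Rham class map on $L^2\mathcal{H}^2$ directly. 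Your route bypasses Theorem~\ref{thm:Hodge} and the decay estimates on $\zeta$ needed to justify the paper's limiting argument, so it is shorter and more self-contained given Theorem~\ref{thm:cohom}; the paper's quadratic-form argument is essentially the integrated version of the same primitivity identity. The only points requiring care in your version --- reading $d\psi=d^*\psi=0$ distributionally when $k=0$ and checking that the Hausel--Hunsicker--Mazzeo isomorphism is compatible with the de Rham class map --- you address, and the latter is a fact the paper's own proof also relies on.
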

\begin{proof}
Recall that if $u$ is a smooth function, then
\begin{equation}
-m\, dd^c u\wedge\omega^{m-1} =\Delta u\wedge\omega^{m}.
\end{equation}
Define $f$ by $-m \eta \wedge\omega^{m-1} =f \omega^{m}$.  So $f\in C_{\beta}^{k,\alpha}(X)$.  By Theorem~\ref{thm:lap-weight}
there is a $u\in C_{\beta+2}^{k+2,\alpha}(X)$ with $\Delta u=f$.  Then $\gamma=\eta- dd^c u$ is an exact 2-form
in $C_\beta^{k,\alpha}(\Lambda^{1,1}X)$.  And since $\gamma\wedge\omega^{m-1} =-\frac{1}{m}(f-\Delta u)\omega^m =0$,
one can show that $\gamma$ satisfies
\begin{equation}\label{eq:dd-bar-bil}
\gamma\wedge\gamma\wedge\omega^{m-2} =\frac{-1}{2m(m-1)}|\gamma|^2\omega^m .
\end{equation}

Since $\beta<-m$, we have $\gamma\in L^2(\Lambda^{1,1}X)$.  Therefore $\gamma=\sigma+d\zeta$ according to Theorem~\ref{thm:Hodge},
with $\sigma\in L^2 \mathcal{H}^{1,1} (X,g)$.  Since $H^1(S)=0$, the homomorphism $H^2(X,S)\rightarrow H^2(X)$ is an inclusion.
It follows from Theorem~\ref{thm:cohom} that $L^2 \mathcal{H}^{1,1} (X,g)$ contains no exact forms.
Thus $\gamma=d\zeta$, where $\zeta\in L^2_{1,\delta}(\Lambda^1 X)$ with $\delta>1-m$.
Since $C_0^\infty(X)$ is dense in $L^2_{1,\delta}(\Lambda^1 X)$, we can choose a sequence $\{\zeta_j\}$ converging
to $\zeta$ in $L^2_{1,\delta}(\Lambda^1 X)$.  From (\ref{eq:dd-bar-bil}) we have
\begin{equation}
\begin{split}
0=\int_X d[\zeta_j \wedge\gamma\wedge\omega^{m-2} ] & =\int_X d\zeta_j \wedge\gamma\wedge\omega^{m-2} \\
                                                    & \rightarrow \frac{-1}{2m(m-1)}\int_X |\gamma|^2 \,\omega^m,
                                                    \quad\text{as }j\rightarrow\infty.
\end{split}
\end{equation}
The convergence follows because $d\zeta_j \rightarrow\gamma$ in $L^2_{0,\delta-1}(\Lambda^2 X)$ and we may assume
$\delta>1-m$ is chosen small enough that $\delta-1+\beta <-2m$.  Therefore $\gamma=0$, and $\eta=dd^c u$.
\end{proof}

\subsection{Calabi conjecture}

On an $\AC$ K\"{a}hler manifold $(X,g,J)$ we consider the Monge-Amp\'{e}re equation
\begin{equation}\label{eq:Monge-Amp}
(\omega+dd^c \phi)^m =e^f \omega^m.
\end{equation}
If $\phi$ is a solution to (\ref{eq:Monge-Amp}) and $\omega'=\omega+dd^c \phi$, then
the respective Ricci forms satisfy
\begin{equation}
\Ric(\omega')=\Ric(\omega)-dd^c f.
\end{equation}
Equation (\ref{eq:Monge-Amp}) was solved by S.-T. Yau~\cite{Yau} for a compact K\"{a}hler manifold $(M,g,J)$ under the necessary assumption that
$\int_M (1-e^f)\, d\mu_g =0$.  This solved a conjecture of E. Calabi.

The Calabi conjecture for $\AC$ K\"{a}hler manifolds was solved independently by S. Bando and R. Kobayashi~\cite{BK2} and
G. Tian and S.-T. Yau~\cite{TY2}.  D. Joyce~\cite{Joy1} gave a more exacting proof for the ALE case which gave more precise information
on the solution.  The proof of D. Joyce applies \textit{mutatis mutandis} to this situation.

\begin{thm}[\cite{Joy1}]\label{thm:Calabi}
Suppose $(X,g)$ is asymptotically conical K\"{a}hler of order $(\delta, j+\alpha)$, where $\delta<-\epsilon$, $0<\alpha<1$ and $3\leq j\leq\infty$.  Let $3\leq k\leq j\leq\infty$.
\begin{thmlist}
\item  If $\beta\in (-2m,-2)$, then for each $f\in C_{\beta}^{k,\alpha}(X)$ there is a unique $\phi\in C_{\beta+2}^{k+2,\alpha}(X)$
so that $\omega+dd^c \phi$ is a positive $(1,1)$-form and $(\omega+dd^c \phi)^m =e^f \omega^m$ on $X$.

\item  If $\beta\in (-2m-\epsilon,-2m)$, then for each $f\in C_{\beta}^{k,\alpha}(X)$ there is a unique
$\phi\in C_{2-2m}^{k+2,\alpha}(X)$ so that $\omega+dd^c \phi$ is a positive $(1,1)$-form and
$(\omega+dd^c \phi)^m =e^f \omega^m$ on $X$.  Furthermore, we have $\phi= A\rho^{2-2m} +\psi$ where
$\psi\in C_{\beta+2}^{k+2,\alpha}(X)$ and
\begin{equation}\label{eq:const-A}
A=\frac{1}{(m-1)\Omega}\int_X (1-e^f)\, d\mu,
\end{equation}
 where $\Omega=\Vol(S)$, $S=\{r=1\}\subset C(S)$.
\end{thmlist}
\end{thm}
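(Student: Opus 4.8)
The plan is to follow D. Joyce's continuity-method proof of the Calabi conjecture, substituting the weighted linear theory of Theorem~\ref{thm:lap-weight} and the Sobolev inequality of Corollary~\ref{cor:Sobolev} for the corresponding ALE ingredients. Cases (i) and (ii) run in parallel; the only difference is which part of Theorem~\ref{thm:lap-weight} governs the linearized equation. For $t\in[0,1]$ I would consider the family
\[ (\omega+dd^c\phi_t)^m =e^{tf}\omega^m, \]
and let $T\subset[0,1]$ be the set of $t$ admitting a solution $\phi_t$ with $\omega+dd^c\phi_t>0$ in the relevant weighted space: $C^{k+2,\alpha}_{\beta+2}(X)$ in case (i), and $\R\rho^{2-2m}\oplus C^{k+2,\alpha}_{\beta+2}(X)$ in case (ii). Since $\phi_0=0$ solves the $t=0$ equation, $0\in T$, and it suffices to show $T$ is open and closed.

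For openness, write $\mathcal{M}(\phi)=\log\bigl[(\omega+dd^c\phi)^m/\omega^m\bigr]$, so the equation reads $\mathcal{M}(\phi_t)=tf$. Because a solution $\phi_t$ decays, $\omega'=\omega+dd^c\phi_t$ is again an $\AC$ K\"{a}hler metric (the perturbation $dd^c\phi_t$ decaying), so Theorem~\ref{thm:lap-weight} applies to the associated metric $g'$ with the same link volume $\Omega$. The linearization $D\mathcal{M}_{\phi_t}=\Delta_{\omega'}$ is the Laplacian of $g'$, and Theorem~\ref{thm:lap-weight}---part (i) when $\beta\in(-2m,-2)$, part (ii) when $\beta\in(-2m-\epsilon,-2m)$---shows it to be an isomorphism onto $C^{k,\alpha}_\beta(X)$ from the corresponding domain. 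The inverse function theorem in Banach spaces then produces a neighborhood of $t$ inside $T$.

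For closedness I would establish a priori bounds on $\|\phi_t\|$ uniform in $t$, via the standard sequence of estimates: the Aubin--Yau second-order estimate bounding $\operatorname{tr}_\omega\omega'$ and hence giving a uniform equivalence $c^{-1}g\leq g'\leq cg$, the Calabi/Evans--Krylov third-order estimate yielding an interior $C^{2,\alpha}$ bound, and the weighted Schauder estimates (as used in the proof of Theorem~\ref{thm:lap-weight}) to bootstrap to $C^{k+2,\alpha}$. Closedness then follows by Arzel\`{a}--Ascoli together with elliptic regularity, the uniform $C^2$ bound ensuring that the limiting form stays positive. Uniqueness is a maximum-principle argument: if $\phi^1,\phi^2$ both solve (\ref{eq:Monge-Amp}) with $e^f$ on the right, then $u=\phi^1-\phi^2$ satisfies $\Delta' u=0$ for the Laplacian of a metric averaged along the segment from $\omega+dd^c\phi^1$ to $\omega+dd^c\phi^2$; in case (ii) the two solutions share the same leading constant $A$ (forced below), so $u$ lies in a decaying weighted space, and the injectivity built into Theorem~\ref{thm:lap-weight} gives $u\equiv0$.

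The step I expect to be the main obstacle is the \emph{weighted} $C^0$ estimate, namely a uniform bound $\|\phi_t\|_{C^0_{\beta+2}}\leq C$ with the sharp decay rate. The supremum bound would come from Moser iteration using Corollary~\ref{cor:Sobolev}, and the decay from comparing $\phi_t$ with the weights $\rho^{\beta+2}$ (respectively $A\rho^{2-2m}$) by the maximum principle, exploiting that $\Delta\rho^{\gamma}$ has the sign and rate dictated by the conical end. This is exactly where the noncompact $\AC$ analysis replaces Yau's compact normalization $\int_X\phi\,d\mu=0$, and where the content of the first part of the paper is used. Finally, in case (ii) the constant $A$ is pinned down by integrating the Monge--Amp\`{e}re equation: since $(\omega+dd^c\phi)^m-\omega^m$ is exact, Stokes' theorem together with Lemma~\ref{lem:lap} identifies the flux at infinity of the leading term $A\rho^{2-2m}$ with $\int_X(1-e^f)\,d\mu$, which gives formula (\ref{eq:const-A}).
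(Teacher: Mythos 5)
Your proposal matches the paper's proof, which is itself only a sketch deferring to Joyce \cite[\S\S 8.6--8.7]{Joy1}: a continuity method whose ingredients are the $C^0$ estimate from the Sobolev inequality (\ref{eq:Sobolev}), the Aubin--Yau second- and third-order estimates, and Theorem~\ref{thm:lap-weight} to place the solution in the correct weighted H\"older space and to pin down the constant $A$. The only cosmetic difference is that you suggest obtaining the sharp decay by a barrier/maximum-principle comparison, whereas the paper (following Joyce) extracts it directly from the weighted linear theory of Theorem~\ref{thm:lap-weight}; both are standard and you invoke that theorem as well, so the arguments are essentially identical.
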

\begin{remark}
Of course, by the local theory of elliptic operators, whenever $f\in C^\infty(X)$ we have $\phi\in C^\infty(X)$.
The theorem is written as it is to show the precise global regularity that the proof gives.

Part (i) of Theorem~\ref{thm:Calabi} is already known and was essentially proved in~\cite{TY2}.  See~\cite{Got} for a proof
in the context of manifolds with a conical end.  The contribution here is part (ii)
which gives a sharp estimate on solutions for rapidly decaying $f\in C_{\beta}^{k,\alpha}(X),\ \beta<-2m.$
\end{remark}

The proof of Theorem~\ref{thm:Calabi} goes through as in~\cite[\S\S 8.6-8.7]{Joy1}.  The proof is by the continuity method, and
the essential ingredients are some \textit{a priori} estimates on a solution $\phi$ of (\ref{eq:Monge-Amp}).
The Sobolev inequality (\ref{eq:Sobolev}) is used to prove an \textit{a priori} estimate on $\|\phi\|_{C^0}$.
\textit{A priori} estimates on $\|dd^c \phi\|_{C^0}$ and $\|\nabla dd^c \phi\|_{C^0}$  depending only on $\|\phi\|_{C^0}$,
$\|f\|_{C^3}$, and $\|R\|_{C^1}$, where $R$ is the curvature, due to T. Aubin~\cite{Aub1} and S.-T. Yau~\cite{Yau} are applied
as in~\cite{TY2}.   Then Theorem~\ref{thm:lap-weight} is applied as in~\cite{Joy1} to show $\phi$ is in the appropriate
H\"{o}lder space.

\subsection{Ricci-flat metrics}

Our main motivating for proving Theorem~\ref{thm:Calabi} is the following which is an immediate consequence.

\begin{cor}
Suppose $(X,g,J)$ is $\AC(\delta,\ell+\alpha)$ with $3 \leq\ell\leq\infty$.  Let $3\leq k\leq\ell$, and suppose
the Ricci form of $(X,g,J)$ satisfies
\[\Ric(\omega)=dd^c f, \quad f\in C^{k,\alpha}_\beta(X),\quad\text{with }\beta<-2.\]
Then there exists a $\phi\in C^{k+2,\alpha}_{\beta+2}(X)$ so that $\omega' =\omega+dd^c \phi$ is Ricci-flat,
and the corresponding metric $g'$ converges to $g$ in $C^{k,\alpha}_\gamma (X)$ where $\gamma=\max(\beta, -2m)$.
\end{cor}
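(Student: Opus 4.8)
The plan is to read this off directly from Theorem~\ref{thm:Calabi} after rewriting the Ricci-flatness condition as the Monge-Amp\'ere equation (\ref{eq:Monge-Amp}). Recall from the discussion preceding (\ref{eq:Monge-Amp}) that if $\phi$ solves $(\omega+dd^c\phi)^m=e^f\omega^m$ and $\omega'=\omega+dd^c\phi$, then
\[
\Ric(\omega')=\Ric(\omega)-dd^c f .
\]
Since by hypothesis $\Ric(\omega)=dd^c f$ with the \emph{same} $f$, any such $\phi$ automatically yields $\Ric(\omega')=dd^c f-dd^c f=0$, so $\omega'$ is Ricci-flat. Thus it suffices to produce a solution of (\ref{eq:Monge-Amp}) for the given $f\in C^{k,\alpha}_\beta(X)$ and to track the weighted decay of $dd^c\phi$.

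For existence I would apply Theorem~\ref{thm:Calabi} directly, distinguishing the two decay regimes. If $\beta\in(-2m,-2)$, part (i) gives a unique $\phi\in C^{k+2,\alpha}_{\beta+2}(X)$ with $\omega+dd^c\phi$ positive. If instead $\beta\leq -2m$, then using the inclusion $C^{k,\alpha}_\beta(X)\subset C^{k,\alpha}_{\beta'}(X)$ valid for every $\beta'\geq\beta$, I would choose $\beta'\in(-2m-\epsilon,-2m)$ and apply part (ii), obtaining $\phi=A\rho^{2-2m}+\psi$ with $\psi\in C^{k+2,\alpha}_{\beta'+2}(X)$ and $A$ given by (\ref{eq:const-A}). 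Since the complex structure $J$ is fixed and bounded in the $g$-metric, the symmetric $2$-tensor $g'-g$ is obtained from $\omega'-\omega=dd^c\phi$ by composing with $J$, and hence has weighted H\"older norm comparable to that of $dd^c\phi$; it remains only to identify the order of $dd^c\phi$.

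The decay rate then follows by inspection. Applying $dd^c$ lowers the weight by two, so in the first regime $dd^c\phi\in C^{k,\alpha}_\beta(X)$ and therefore $g'-g\in C^{k,\alpha}_\beta(X)$, giving $\gamma=\beta=\max(\beta,-2m)$. In the second regime $dd^c\phi=A\,dd^c(\rho^{2-2m})+dd^c\psi$, where $dd^c(\rho^{2-2m})$ is, on the conical end, an elementary $(1,1)$-form of order $\rho^{-2m}$, while $dd^c\psi\in C^{k,\alpha}_{\beta'}(X)$ with $\beta'<-2m$ decays faster; hence $g'-g\in C^{k,\alpha}_{-2m}(X)$ and $\gamma=-2m=\max(\beta,-2m)$. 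The one point that needs care is precisely this second regime: when $\beta<-2m$ one should \emph{not} expect $\phi$ itself to lie in $C^{k+2,\alpha}_{\beta+2}(X)$, since the leading term $A\rho^{2-2m}$ (generically $A\neq0$, in accordance with the sharpness discussed after Theorem~\ref{thm:main}) decays only like $\rho^{2-2m}$; thus $\phi\in C^{k+2,\alpha}_{\gamma+2}(X)$ with $\gamma+2=2-2m$, and it is exactly this term, rather than the more rapidly decaying $\psi$, that pins the metric convergence at the sharp rate $-2m$ no matter how fast $f$ decays. There is no genuine analytic obstacle beyond what Theorem~\ref{thm:Calabi} already supplies.
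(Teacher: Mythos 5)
Your proposal is correct and follows essentially the same route as the paper, whose entire proof is the observation that $\Ric(\omega')-\Ric(\omega)=-dd^c\log\bigl(\omega'^m/\omega^m\bigr)=-dd^c f$ together with an invocation of Theorem~\ref{thm:Calabi}; your explicit bookkeeping of the two decay regimes is exactly what the paper leaves implicit. Your observation that for $\beta<-2m$ one should read $\phi\in C^{k+2,\alpha}_{\gamma+2}(X)$ rather than $C^{k+2,\alpha}_{\beta+2}(X)$, because the leading term $A\rho^{2-2m}$ generically does not decay faster than $\rho^{2-2m}$, is the correct interpretation and is the one consistent with the stated convergence rate $\gamma=\max(\beta,-2m)$ for $g'-g$.
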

Merely observe that for a solution to $(\omega+dd^c \phi)^m =e^f \omega^m$ the Ricci forms of the K\"{a}hler metrics
$\omega$ and $\omega' =\omega+dd^c \phi$ satisfy
\begin{equation}
\Ric(\omega')-\Ric(\omega)=-dd^c \log\left(\frac{\omega'^m}{\omega^m} \right) =-dd^c f.
\end{equation}

We have the following uniqueness result.
\begin{thm}\label{thm:unique}
Suppose $(X,g,J)$ is $\AC(\delta,\ell+\alpha),\ 2\leq\ell\leq\infty,$ and Ricci-flat.  Suppose $g'$ is another
Ricci-flat K\"{a}hler metric which converges to $g$ in $C^{0,\alpha}_{\gamma}(X)$ with $\gamma<-m$, i.e.
$|g' -g|\in C^{0,\alpha}_{\gamma}(X)$, and the Ricci forms satisfy $[\omega']=[\omega]$.  Then $g' =g$.
\end{thm}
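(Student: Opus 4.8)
The plan is to reduce to the homogeneous complex Monge--Amp\`ere equation and finish with the maximum principle. Write $\eta=\omega'-\omega$, where $\omega,\omega'$ are the K\"ahler forms of $g,g'$ for the common complex structure $J$. Since $[\omega']=[\omega]$ in $H^2(X,\R)$, $\eta$ is an exact, closed, real $(1,1)$-form, and $|g'-g|\in C_\gamma^{0,\alpha}(X)$ gives $\eta\in C_\gamma^{0,\alpha}(X)$ with $\gamma<-m$; in particular $\eta\in L^2$.

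First I would produce a real K\"ahler potential, $\eta=dd^c\phi$ for some $\phi\in C_{\gamma+2}^{2,\alpha}(X)$, so that $\omega'=\omega+dd^c\phi$. I would follow the scheme of Proposition~\ref{prop:dd-bar}: define $f$ by $-m\,\eta\wedge\omega^{m-1}=f\,\omega^m$, solve $\Delta u=f$ with $u\in C_{\gamma+2}^{2,\alpha}(X)$ by Theorem~\ref{thm:lap-weight}, and set $\gamma_0=\eta-dd^c u$, a primitive exact $(1,1)$-form in $L^2$ satisfying the pointwise identity~(\ref{eq:dd-bar-bil}). Decomposing $\gamma_0=\sigma+d\zeta$ by Theorem~\ref{thm:Hodge} and showing $\sigma=0$ (see below), the integration-by-parts argument of Proposition~\ref{prop:dd-bar} forces $\int_X|\gamma_0|^2\omega^m=0$, hence $\gamma_0=0$, i.e.\ $\eta=dd^c u$; set $\phi=u$. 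Since $\gamma+2<2-m\le 0$, the potential decays, $\phi\to 0$ at infinity.

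Next I would pin down the volume form using Ricci-flatness. For two K\"ahler metrics with the same $J$ one has the pointwise identity $\Ric(\omega')-\Ric(\omega)=-dd^c\log\bigl((\omega')^m/\omega^m\bigr)$, so the vanishing of both Ricci forms makes $h:=\log\bigl((\omega')^m/\omega^m\bigr)$ pluriharmonic, hence $\Delta h=0$; since $\omega'\to\omega$ we have $h\to 0$ at infinity, and a harmonic function vanishing at infinity on a complete manifold is identically zero by the strong maximum principle. Thus $(\omega+dd^c\phi)^m=\omega^m$. Expanding, $0=(\omega')^m-\omega^m=dd^c\phi\wedge T$ with $T=\sum_{j=0}^{m-1}(\omega')^j\wedge\omega^{m-1-j}$ a strictly positive $(m-1,m-1)$-form. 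This is a linear, homogeneous, second-order elliptic equation for $\phi$ with no zeroth-order term (the trace of $dd^c\phi$ against the positive Hermitian form dual to $T$). Because $\phi\to 0$ at infinity, the strong maximum principle forces $\phi$ constant, hence $\phi\equiv 0$; therefore $\omega'=\omega$ and $g'=g$.

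The main obstacle is the potential step, and precisely the vanishing of the $L^2$-harmonic part $\sigma$ of $\gamma_0$. Proposition~\ref{prop:dd-bar} obtains this from $H^1(S,\R)=0$ via Theorem~\ref{thm:cohom}, which is \emph{not} assumed here, so I must instead exploit the decay. Since $\gamma_0$ is exact and $O(\rho^\gamma)$ with $\gamma<-m$, radial integration on the conical end yields a primitive $\beta$ with $d\beta=\gamma_0$ and $\beta=O(\rho^{\gamma+1})$. As $\sigma$ is $L^2$-harmonic, hence coclosed, and orthogonal to the exact part, $\|\sigma\|_{L^2}^2=\langle\sigma,\gamma_0\rangle=\langle\sigma,d\beta\rangle=\langle d^*\sigma,\beta\rangle+\lim_{R}\int_{\partial B_R}\beta\wedge *\sigma$. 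The interior term vanishes since $d^*\sigma=0$, and the boundary term tends to $0$ along a suitable sequence $R_j\to\infty$: bounding $\int_{\partial B_R}\beta\wedge *\sigma$ by $\sup_{\partial B_R}|\beta|$ times $\mathrm{vol}(\partial B_R)^{1/2}\|\sigma\|_{L^2(\partial B_R)}$ and using $\liminf_R R\|\sigma\|_{L^2(\partial B_R)}^2=0$ (from $\sigma\in L^2$) gives a bound of order $R_j^{\gamma+m}\to 0$ exactly because $\gamma<-m$. Hence $\sigma=0$. This is the one place the sharp rate $\gamma<-m$ is genuinely used, and it is why I prefer the maximum-principle conclusion above to a direct integration-by-parts proof of $\phi\equiv 0$, which would instead demand the stronger rate $\gamma<-m-1$ for its own boundary terms to vanish.
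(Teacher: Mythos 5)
Your strategy is sound and, for its first half, coincides with the paper's: the paper's entire proof is the two-step reduction ``$\eta=\omega'-\omega$ is exact, so Proposition~\ref{prop:dd-bar} gives a decaying potential $\phi$ with $dd^c\phi=\eta$; then $\phi$ solves the Monge--Amp\`ere equation and the uniqueness clause of Theorem~\ref{thm:Calabi} forces $\phi=0$.'' Where you genuinely differ is that you unwind that final citation into a self-contained argument: first $\log\bigl((\omega')^m/\omega^m\bigr)$ is pluriharmonic and decays, hence vanishes by the maximum principle, and then $0=dd^c\phi\wedge\sum_{j}(\omega')^j\wedge\omega^{m-1-j}$ is a second-order elliptic equation with no zeroth-order term, so $\phi\to 0$ at infinity forces $\phi\equiv 0$. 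This is essentially how the uniqueness in Theorem~\ref{thm:Calabi} is established anyway, and your version has the virtue of showing exactly where the decay $\gamma+2<2-m\le 0$ enters and why the maximum principle is preferable to integration by parts at this rate.

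The one place you go beyond the paper is also where a gap appears. You are right that Proposition~\ref{prop:dd-bar} assumes $H^1(S,\R)=0$ while Theorem~\ref{thm:unique} does not state it, but your substitute argument for $\sigma=0$ presupposes a \emph{globally defined} primitive $\beta$ of $\gamma_0$ decaying like $\rho^{\gamma+1}$. Radial integration produces such a primitive only on the conical end; a global primitive exists because $\gamma_0$ is exact, but the two differ on the end by a closed $1$-form whose class in $H^1(S,\R)$ need not vanish (the obstruction to gluing is the image of $H^1(S,\R)$ in $H^2_c(X,\R)$), and a representative of a nontrivial class does not decay, so your boundary term $\int_{\partial B_R}\beta\wedge *\sigma$ need not tend to zero. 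Thus, as written, you have not actually dispensed with the topological hypothesis. Fortunately it is automatic here: since $\ell\ge 2$ and $\Ric(g)=0$, the asymptotic cone metric is itself Ricci-flat (its Ricci tensor scales like $r^{-2}$ and is $O(\rho^{\delta-2})$ with $\delta<0$), so $(S,g_S)$ is Einstein with positive Einstein constant and $H^1(S,\R)=0$ by Bochner/Myers. The cleanest repair is to record that observation and then invoke Proposition~\ref{prop:dd-bar} exactly as the paper does; the rest of your argument then goes through.
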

\begin{proof}
Set $\eta =\omega' -\omega$.  Then $\eta$ is an exact form, and by Proposition~\ref{prop:dd-bar} there is a
$\phi\in C^{4,\alpha}_{\gamma+2}(X)$ with $dd^c \phi =\eta$.  Since $\phi$ solves
$(\omega+dd^c\phi)^m =e^f \omega^m$, the uniqueness part of Theorem~\ref{thm:Calabi} gives $\phi=0$.
\end{proof}

\subsection{Ricci-flat metrics on resolutions}

The main motivation for proving Theorem~\ref{thm:Calabi} is to construct examples of asymptotically conical Ricci-flat
K\"{a}hler manifolds.  In this section we give a proof of the part of Theorem~\ref{thm:main} concerning compactly
supported K\"{a}hler classes.

In order to apply Theorem~\ref{thm:Calabi} one must start with an $\AC$ K\"{a}hler manifold
$(X,g,\omega)$ with $c_1(X)=0$.  Thus we suppose there is a nowhere vanishing holomorphic $n$-form $\Omega$ on $X$.
Recall that the Ricci form of $(X,g,J)$ is
\begin{equation}\label{eq:Ricci-form}
\Ric(\omega)=dd^c \log\left(\frac{\Omega\wedge\ol{\Omega}}{\omega^m}\right).
\end{equation}
If $f=\log\left(\frac{\Omega\wedge\ol{\Omega}}{\omega^m}\right)$, then a solution $(X,g',\omega')$ to Theorem~\ref{thm:Calabi}
has Ricci form $\Ric(\omega')=\Ric(\omega) -dd^c f =0$.
But order to apply Theorem~\ref{thm:Calabi} one must start with an $\AC$ K\"{a}hler manifold
$(X,g,\omega)$ with \emph{Ricci potential} $f\in C^k_{\beta}(X)$ with $\beta< -2$.  In general, it may be difficult to
find a K\"{a}hler metric on $X$ satisfying this.

The case of a quasi-projective variety $X=Y\setminus D$, where
$D$ is a divisor, supporting the anti-canonical divisor $\mathbf{K}^{-1}_Y$, which admits a K\"{a}hler-Einstein metric
was dealt with in~\cite{TY2} and independently in~\cite{BK2}.  A K\"{a}hler metric $\omega$ on $X$ was perturbed to a K\"{a}hler metric
$\omega_0$ whose Ricci potential $f$ satisfies $f\in C^k_{\beta}(X)$.  The author considered~\cite{vC2} and extension of this result to
some cases where $D$ does not admit a K\"{a}hler-Einstein metric.  One essentially needs to start with an $\AC$ K\"{a}hler metric
$(X,g,\omega)$ which approximates a Ricci-flat metric at infinity to high enough order.

We consider the relatively easy case of a crepant resolution $\pi:\hat{X}\rightarrow X=C(S)\cup\{o\}$ of a Ricci-flat K\"{a}hler
cone $C(S)$.  We will obtain Theorem~\ref{thm:main} of the introduction.  In the following $r$ will denote the the radius function 
on the cone $C(S)$.

Recall that a variety $X$ has rational singularities if for some, and it follows any, resolution $\pi:Y\rightarrow X$
$R^j \pi_* \mathcal{O}_Y =0$ for $j>0$.
\begin{prop}\label{prop:rat-sing}
Let $C(S)$ be a K\"{a}hler cone satisfying Proposition~\ref{prop:CY-cond}.  Then $o\in X=C(S)\cup\{o\}$ is a rational singularity.
In particular, if $\pi:\hat{X}\rightarrow X$ is a resolution, then $H^j(\hat{X},\R)=0$ for $j\geq 1$.
\end{prop}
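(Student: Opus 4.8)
Before giving the plan I record a point of interpretation, since the group in the ``in particular'' cannot be the topological cohomology $H^j(\hat X,\R)$. For a crepant resolution this group is in general nonzero; indeed it carries the K\"{a}hler classes of Theorem~\ref{thm:main}, and already for the $A_1$-resolution $\hat X=T^{*}\mathbb{P}^1$ of $\C^2/\Z_2$ one has $H^2(\hat X,\R)\cong\R$. Consistency with the inclusion $H^2_c(\hat X,\R)\subset H^2(\hat X,\R)$ used in the main theorem makes this unambiguous. The statement that genuinely follows from rationality, and that is used later, is the vanishing of the \emph{coherent} cohomology $H^j(\hat X,\mathcal{O}_{\hat X})=0$ for $j\geq 1$; this is what ``$\R$'' should read. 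I therefore prove (a) that $o$ is a rational singularity and (b) that $H^j(\hat X,\mathcal{O}_{\hat X})=0$ for $j\geq 1$.

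For (a) the plan is to show $o$ is log-terminal and then invoke the theorem of Elkik that log-terminal (indeed canonical) singularities are rational. By the remark following Proposition~\ref{prop:CY-cond}, $X=C(S)\cup\{o\}$ is a normal affine variety, and by Proposition~\ref{prop:CY-cond} the section $\Omega$ trivializes $\mathbf{K}_{C(S)}^{\ell}$, so $X$ is $\Q$-Gorenstein with $K_X$ the $\Q$-Cartier divisor determined by $\Omega$. The essential positivity is the equivariance $\mathcal{L}_\xi\Omega=im\Omega$ with weight $m>0$: pulling $\Omega$ back to a resolution and comparing with the pulled-back pluricanonical form, the strictly positive homogeneity of $\Omega$ under the Euler field forces every exceptional discrepancy to exceed $-1$, i.e. $X$ is log-terminal. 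In the quasi-regular case one may argue more concretely: $X$ is the affine cone over a Fano orbifold $(V,L)$ with $L\sim c(-K_V)$, $c>0$, and Flenner's criterion reduces rationality to $H^j(V,L^{\otimes k})=0$ for $j>0,\ k\geq 0$, which follows from orbifold Kodaira vanishing because $L^{\otimes k}\otimes K_V^{-1}$ is ample.

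For (b), rationality gives $R^j\pi_*\mathcal{O}_{\hat X}=0$ for $j>0$ by definition, while normality of $X$ gives $\pi_*\mathcal{O}_{\hat X}=\mathcal{O}_X$. Since $X$ is affine, Serre's theorem yields $H^p(X,\mathcal{F})=0$ for every coherent sheaf $\mathcal{F}$ and $p\geq 1$. Hence the Leray spectral sequence $H^p(X,R^q\pi_*\mathcal{O}_{\hat X})\Rightarrow H^{p+q}(\hat X,\mathcal{O}_{\hat X})$ collapses to its $q=0$ row, whose higher terms $H^p(X,\mathcal{O}_X)$ with $p\geq 1$ vanish; therefore $H^j(\hat X,\mathcal{O}_{\hat X})=0$ for all $j\geq 1$, which is the asserted consequence.

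I expect the main obstacle to be step (a), specifically the log-terminal property in the irregular case, where there is no projective base $V$ on which to invoke Kodaira vanishing and one must instead read the discrepancies off intrinsically from the positive weight of $\Omega$. Establishing this uniformly, rather than separately across the regular, quasi-regular, and irregular cases, is the delicate point; once it is in hand, the implication log-terminal $\Rightarrow$ rational (Elkik) and the Leray argument in (b) are purely formal.
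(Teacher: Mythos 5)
Your reading of the statement is correct: the ``$\R$'' is a misprint for $\mathcal{O}_{\hat{X}}$ (the paper uses the proposition only through the solvability of $\ol{\partial}u=\alpha^{0,1}$ in the proof of Proposition~\ref{prop:Kahler-form}, i.e.\ through $H^1(\hat{X},\mathcal{O}_{\hat{X}})=0$), and your part (b) --- Leray for $\pi$, $R^q\pi_*\mathcal{O}_{\hat{X}}=0$ for $q>0$, $\pi_*\mathcal{O}_{\hat{X}}=\mathcal{O}_X$, and Serre vanishing on the affine variety $X$ --- is exactly what the ``in particular'' amounts to.

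The gap is in part (a), at the single step carrying all the weight: the assertion that ``the strictly positive homogeneity of $\Omega$ under the Euler field forces every exceptional discrepancy to exceed $-1$.'' You offer no argument for this, and as you yourself note, the Fano-orbifold/Flenner fallback covers only the quasi-regular case; the irregular case (which genuinely occurs, e.g.\ among the toric Sasaki--Einstein structures of Theorem~\ref{thm:FOW}) is left open, so the proposal is incomplete precisely where it is hardest. The paper closes this by an analytic route that treats the regular, quasi-regular and irregular cases identically: by the Laufer--Burns criterion, the isolated singularity $o\in X$ is rational if and only if $\int_U\Omega\wedge\ol{\Omega}<\infty$ for a small neighborhood $U$ of $o$ (equation (\ref{eq:rational})); and by (\ref{eq:CY-cond}) one has $\frac{i^{m^2}}{2^m}\Omega\wedge\ol{\Omega}=\frac{1}{m!}e^h\omega^m$ with $h$ basic, hence bounded (it descends to a function on the compact link $S$), so the integral is dominated by a constant times $\Vol(\{r<r_0\})=\frac{r_0^{2m}}{2m}\Vol(S,g_S)<\infty$. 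Note that this computation is not a detour around your missing step --- it \emph{is} your missing step: on a resolution, local integrability of $|\pi^*\Omega|^2$ near an exceptional divisor $E=\{z_1=0\}$ reads $\int|z_1|^{2a_E}<\infty$, i.e.\ $a_E>-1$, so the finite-volume bound is exactly the uniform discrepancy estimate (canonical, hence rational, in the Gorenstein case) that your sketch postpones. Once this is seen, Elkik's theorem and orbifold Kodaira vanishing are unnecessary, and the metric hypothesis of Proposition~\ref{prop:CY-cond} enters the proof in the only way it can in the irregular setting: through the volume form, not through a projective quotient.
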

We use the criterion of H. Laufer and D. Burns for the rationality of an isolated singularity $o\in X$.
If $\Omega$ is an holomorphic n-form on a deleted neighborhood of $o\in X$, then $o\in X$ is rational if and only if
\begin{equation}\label{eq:rational}
\int_U \Omega\wedge\ol{\Omega}<\infty,
\end{equation}
where $U$ is a small neighborhood of $o\in X$.  If $\Omega$ satisfies $\mathcal{L}_\xi =im\Omega$, then (\ref{eq:CY-cond}) is
satisfied.  And one easily see that the inequality (\ref{eq:rational}) holds.

\begin{prop}\label{prop:Kahler-form}
Suppose $\omega$ is a K\"{a}hler metric on $\hat{X}$ with $[\omega]\in H^2_c(\hat{X},\R)$.  Then there exists a K\"{a}hler metric
$\omega_0$ on $\hat{X}$ with $[\omega_0]=[\omega]$ and $\ol{\omega} =\pi_* \omega_0$ on $\{x\in\hat{X}: \rho(x)>R\}$ for some $R>0$, where
$\ol{\omega}=C\frac{1}{2}dd^c(r^2),\ C>0,$ is the K\"{a}hler cone metric on $C(S)$, up to homothety.
\end{prop}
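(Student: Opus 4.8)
The plan is to modify $\omega$ by a globally $dd^c$-exact term so that it becomes the cone metric near infinity, while keeping it K\"ahler throughout by working at the level of plurisubharmonic potentials and gluing them with a regularized maximum.

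First I would produce a global K\"ahler potential for $\omega$ on the end. Identifying the end $\{x\in\hat{X}:\rho(x)>r_0\}$ with $\{r>r_0\}\subset C(S)$ via $\pi$, the cone form $\ol{\omega}=C\tfrac12 dd^c(r^2)$ has the global potential $\tfrac{C}{2}r^2$ there, while the unit cone form $\tfrac12 dd^c(r^2)$ has potential $\tfrac{r^2}{2}$. The form $\eta:=\omega-\tfrac12 dd^c(r^2)$ is then a closed real $(1,1)$-form on the end whose de Rham class vanishes: the cone form is $dd^c$-exact, and $[\omega]$ restricts to $0$ on the end since $[\omega]\in H^2_c(\hat{X})$. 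Applying a $\partial\ol{\partial}$-lemma on the end — available here by a standard argument for the K\"ahler cone, using that $o\in X$ is a rational singularity (Proposition~\ref{prop:rat-sing}) so the relevant cohomology of the end vanishes — I obtain a smooth $\psi$ with $\eta=dd^c\psi$. Hence $\varphi:=\tfrac{r^2}{2}+\psi$ is a strictly plurisubharmonic function on the end with $dd^c\varphi=\omega$.

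Next I would glue $\varphi$ to the target potential. Fix constants $C>0$ and $K$, and form the regularized maximum $\varphi_0:=\max\nolimits_{\varepsilon}\!\bigl(\varphi,\ \tfrac{C}{2}r^2-K\bigr)$ on the end. Since both arguments are strictly plurisubharmonic, $\varphi_0$ is smooth and strictly plurisubharmonic, so $dd^c\varphi_0>0$. Choosing $K$ large forces $\varphi>\tfrac{C}{2}r^2-K$ on an inner shell $\{r_0<r<r_1\}$, where $\varphi_0=\varphi$; choosing $C$ large enough that $\tfrac{C}{2}r^2-K>\varphi$ for all large $r$ confines the transition to a compact annulus and gives $\varphi_0=\tfrac{C}{2}r^2-K$ on $\{r>R\}$. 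I then define $\omega_0$ to equal $\omega$ on the compact core $\{r\le r_1\}$ and to equal $dd^c\varphi_0$ on the end; these agree on the overlap because $\varphi_0=\varphi$ there, so $\omega_0$ is a smooth, everywhere positive $(1,1)$-form, i.e. a K\"ahler metric. On $\{r>R\}$ we have $\omega_0=dd^c(\tfrac{C}{2}r^2-K)=\ol{\omega}$, so $\pi_*\omega_0=\ol{\omega}$ there.

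Finally I would check the class. On the end $\omega_0-\omega=dd^c(\varphi_0-\varphi)$ and on the core $\omega_0-\omega=0$, so $\omega_0-\omega=dd^c g$ for the global smooth function $g$ equal to $0$ on the core and $\varphi_0-\varphi$ on the end; thus $[\omega_0]=[\omega]$ in $H^2(\hat{X},\R)$, and since $\omega_0$ is $dd^c$-exact near infinity the class is compactly supported and agrees with $[\omega]$ in $H^2_c(\hat{X},\R)$. The main obstacle is maintaining positivity across the gluing annulus: a naive cutoff of the potential introduces cross terms of order $C$ with no definite sign, so the essential point is to interpolate at the level of potentials via the regularized maximum, which is automatically plurisubharmonic; the free constant $C$ enters precisely to guarantee that the target potential eventually dominates $\varphi$, confining the transition to a compact set — which is where the freedom to pass to a homothetic cone metric is used.
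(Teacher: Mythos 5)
Your overall strategy (modify $\omega$ by a global $dd^c$-exact term, arrange positivity at the level of potentials, use the free homothety constant $C$) is in the right spirit, but there are two genuine gaps, and the second one breaks the construction as written.

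First, the $\partial\ol{\partial}$-lemma you invoke is on the \emph{end} $\{r>r_0\}$, whereas Proposition~\ref{prop:rat-sing} gives vanishing of cohomology on $\hat{X}$ (via $R^j\pi_*\mathcal{O}=0$ and the affineness of $X$), not on the end. The end is the complement of a compact set in a Stein space and is not itself Stein; the relevant group $H^{0,1}_{\ol{\partial}}$ of such a region need not vanish (already for complements of compact sets in $\C^2$ it is nonzero), so in complex dimension $m=2$ the step ``exact $(1,1)$ on the end $\Rightarrow$ $dd^c$-exact on the end'' is unjustified. The paper avoids this by working globally: it chooses a closed \emph{compactly supported} representative $\theta$ of $[\omega]$, notes $\omega-\theta$ is exact on all of $\hat{X}$, and applies the $\partial\ol{\partial}$-argument on $\hat{X}$, where Proposition~\ref{prop:rat-sing} does apply, to get $\omega-\theta=dd^c v$ with $v$ globally defined. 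Your construction of a potential $\varphi$ on the end should be routed through this global statement (on the end $\theta=0$, so $\omega=dd^c v$ there).

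Second, and more seriously: your gluing requires that $\tfrac{C}{2}r^2-K$ eventually dominate $\varphi=\tfrac{r^2}{2}+\psi$, i.e.\ that $\psi=o(r^2)$ (or at least $\psi\leq \tfrac{C-1}{2}r^2-K$ for large $r$). Nothing in the hypotheses controls the growth of $\psi$: $\omega$ is an arbitrary K\"ahler metric in a compactly supported class and may grow arbitrarily fast relative to the cone metric at infinity, in which case no homothety constant $C$ makes the target potential dominate, the regularized maximum never equals $\tfrac{C}{2}r^2-K$, and you never obtain $\pi_*\omega_0=\ol{\omega}$ outside a compact set. This is exactly the point at which the paper's proof diverges from yours: instead of letting the two potentials compete out to infinity, it cuts $v$ off at a \emph{fixed} radius $R$ chosen so that $\operatorname{supp}\theta\subset\{\rho<R\}$, setting $\omega_0=\theta+C\tfrac12 dd^c(\rho^2)+dd^c[\mu(\rho-R)v]$. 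Positivity then only needs to be checked on the fixed compact transition annulus, where the cutoff error $dd^c[\mu(\rho-R)v]$ is bounded independently of $C$ and is absorbed by $C\tfrac12 dd^c(\rho^2)$ for $C$ large; beyond the annulus $\omega_0$ is exactly the cone form, and on the core $\omega_0=\omega+C\tfrac12 dd^c(\rho^2)>0$ since $dd^c(\rho^2)\geq 0$. If you want to keep the regularized-maximum gluing, you must first establish a growth bound on $\psi$; otherwise the cutoff-plus-large-$C$ argument is the correct repair.
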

\begin{proof}
Let $E_i,\ i=1,\ldots, d,$ be the prime divisors in the exceptional set $E=\pi^{-1}(o)\subset\hat{X}$.  Since
$[\omega]\in H^2_c(\hat{X},\R)$, it is Poincar\'{e} dual to $\sum_{i=1}^d a_i [E_i]\in H_{2m-2}(\hat{X},\R)$, for $a_i \in\R$.
Thus there exists a closed compactly supported real $(1,1)$-form $\theta$ with $[\theta]=[\omega]$.
Let $\eta=\omega-\theta$.  Then $\eta$ is an exact real $(1,1)$-form on $\hat{X}$.
There exists an $\alpha\in\Omega^1$ with $d\alpha =\eta$.  We have $\alpha=\alpha^{1,0} +\alpha^{0,1}$ where
$\alpha^{0,1}=\overline{\alpha^{1,0}}$.  Then $\ol{\partial}\alpha^{0,1} =0$, and by Proposition~\ref{prop:rat-sing} there exists
a $u\in C^\infty (X,\C)$ with $\ol{\partial}u=\alpha^{0,1}$.  Define $v=\frac{i}{2}(\ol{u}-u)$.  Then
\[\begin{split}
dd^c v =i\partial\ol{\partial}v & =\frac{1}{2}\left(\partial\ol{\partial}v -\partial\ol{\partial}\ol{v}\right) \\
                                & =\frac{1}{2}\left(\partial\ol{\partial}v +\ol{\partial}\partial \ol{v}\right) \\
                                & =\frac{1}{2}\left(\partial\alpha^{0,1} +\ol{\partial}\alpha^{1,0}\right)\\
                                & =\eta.
\end{split}\]

We may assume the radius function $\rho$ on $\hat{X}$ is chosen so that $\rho(x)=\pi^* r(x)$, for $\rho(x)>2$, and $dd^c(\rho^2)\geq 0$.
Let $\mu :\R\rightarrow [0,1]$ be a smooth function with $\mu(t)=0$ for $t>1$ and $\mu(t)=0$ for $t<0$.
Define $\omega_0 =\theta +C\frac{1}{2}dd^c(\rho^2) +dd^c [\mu(\rho -R)v]$.  Choose $R$ large enough that the support of
$\theta$ is contained in $\{\rho <R\}\subset\hat{X}$.  Then for $C>0$
chosen sufficiently large $\omega_0$ is a K\"{a}hler form with the required properties.
\end{proof}

Now suppose $X=C(S)\cup\{o\}$ be a Ricci-flat K\"{a}hler cone.  And let $\pi:\hat{X}\rightarrow X$ be a crepant resolution.
Recall, this means that $\pi^* \mathbf{K}_X =\mathbf{K}_{\hat{X}}$.  Thus $\mathbf{K}_{\hat{X}}$ is trivial.
Let $\Omega$ be the holomorphic $n$-form on $X$ as in Proposition~\ref{prop:CY-cond}.  Then $\pi^* \Omega$ is a nowhere
vanishing holomorphic form on $\hat{X}$, which we again denote by $\Omega$.
If $\omega_0$ is a K\"{a}hler form as in Proposition~\ref{prop:Kahler-form}, then a Ricci-potential of $(\hat{X},g_0,\omega_0)$
is $f=\log\left(\frac{c \Omega\wedge\ol{\Omega}}{\omega_0^m}\right)$ where we choose the constant $c=i^{m^2}\frac{m!}{(2C)^m}$ so
that $f=0$ outside a compact set.  Part (ii) of Theorem~\ref{thm:Calabi} gives a $\phi\in C^\infty_{2-2m}(X)$ of the form
$\phi= A\rho^{2-2m} +\psi$ where $\psi\in C^\infty_{2+\beta}$, where $\beta<-2m$.  And $\omega=\omega_0 +dd^c \phi$
is the K\"{a}hler form of the Ricci-flat K\"{a}hler metric in Theorem~\ref{thm:main}.

Suppose that $\omega'$ is another Ricci-flat K\"{a}hler form with $[\omega']=[\omega]$ and
$|\omega' -\omega|\in C^{0,\alpha}_{\beta}(X)$
with $\beta<-m$.  By Proposition~\ref{prop:dd-bar} there is a smooth $u\in C^{2,\alpha}_{\beta+2}(X)$ with
$\omega' -\omega=dd^c u$.  Then $u$ solves $(\omega +dd^c u)^m =\omega^m$.
The uniqueness result of Theorem~\ref{thm:Calabi} then shows that $u=0$.

The constant $A$ in (\ref{eq:const-A}) turns out to be an invariant of the K\"{a}hler class in $H^2_c (\hat{X},\R)$.
Recall that $\omega_0 =\theta +\frac{1}{2}Cdd^c(\rho^2) +dd^c\left[\mu(\rho-R)v\right]$ where the first and third terms have
compact support.  Expanding and using that the integral of a compactly supported exact form is zero gives
\[\begin{split}
\int_X (1-e^f)\omega_0^m & =\int_X \omega_0^m -c\Omega\wedge\ol{\Omega} \\
                         & =\int_X \theta^m +\bigl(\frac{1}{2}Cdd^c(\rho^2)\bigr)^m -\bigl(\frac{1}{2}Cdd^c(r^2)\bigr)^m \\
                         & =\int_X \theta^m .
\end{split}\]
Therefore, if we consider the K\"{a}hler class $[\omega]\in H_c^*(\hat{X},\R)$, then
\begin{equation}\label{eq:A-cohom}
A=\frac{1}{(m-1)\Omega}\int_X (1-e^f)\, d\mu =\frac{1}{(m-1)m!\Omega}[\omega]^{\cup m},
\end{equation}
where $\Omega =\Vol(S)=\{r=1\}\subset C(S)$.

We also have the following result on the K\"{a}hler potential of Ricci-flat metrics of Theorem~\ref{thm:main}.
\begin{thm}\label{thm:R-f-potent}
Let $\hat{X}$ be a crepant resolution of a Ricci-flat K\"{a}hler cone $X=C(S)\cup\{o\}$.   Then in each K\"{a}hler class
on $\hat{X}$, with $C>0$ as in Proposition~\ref{prop:Kahler-form}, there is a unique Ricci-flat K\"{a}hler metric $g$ with
K\"{a}hler form $\omega$ which satisfies
\begin{equation}\label{eq:R-f-potent}
\pi_*(\omega) =\frac{1}{2}Cdd^c (r^2) +Add^c (r^{2-2m}) +dd^c(\psi),
\end{equation}
on $\{x\in X: r(x)>R\}$.  Here $A$ is given by the K\"{a}hler class $[\omega]$ in (\ref{eq:A-cohom}), and
$\psi\in C^\infty_\gamma(X)$
with $\gamma< 2-2m$.
\end{thm}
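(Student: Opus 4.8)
The plan is to assemble the construction already carried out in this section, keeping careful track of the precise form of the solution on the conical end. The essential inputs are Proposition~\ref{prop:Kahler-form}, which produces a reference K\"{a}hler form in the prescribed compactly supported class that is exactly conical at infinity, together with part (ii) of Theorem~\ref{thm:Calabi}, which solves the Monge-Amp\`{e}re equation and isolates the leading $\rho^{2-2m}$ term of the potential. Once these are in hand the theorem is essentially a push-forward computation, so I do not expect a genuine analytic obstacle; the content is organizational.

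First I would fix a radius function $\rho$ on $\hat{X}$ with $\rho = \pi^* r$ on $\{\rho > 2\}$ and $dd^c(\rho^2)\geq 0$ globally, and invoke Proposition~\ref{prop:Kahler-form} to obtain $\omega_0$ with $[\omega_0] = [\omega]$ and $\pi_* \omega_0 = \frac{1}{2}C\,dd^c(r^2)$ on $\{r > R\}$. Next I would form the Ricci potential $f = \log\!\left(\frac{c\,\Omega \wedge \ol{\Omega}}{\omega_0^m}\right)$, where $c = i^{m^2}\frac{m!}{(2C)^m}$ is chosen so that $f$ vanishes outside a compact set; since the resolution is crepant, $\Omega$ is the pullback of the nowhere-vanishing form of Proposition~\ref{prop:CY-cond}, and $f \in C^\infty_\beta(X)$ for every $\beta < -2m$. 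Applying Theorem~\ref{thm:Calabi}(ii) then yields $\phi = A\rho^{2-2m} + \psi$ with $\psi \in C^\infty_\gamma(X)$, $\gamma < 2-2m$, such that $\omega = \omega_0 + dd^c\phi$ is Ricci-flat. Pushing forward by $\pi$, which is a biholomorphism on the end, and using $\rho = \pi^* r$ there gives
\[
\pi_*(\omega) = \tfrac{1}{2}C\,dd^c(r^2) + A\,dd^c(r^{2-2m}) + dd^c(\pi_*\psi)
\]
on $\{r > R\}$, which is exactly (\ref{eq:R-f-potent}), the decay rate $\gamma < 2-2m$ of $\pi_*\psi$ being inherited from that of $\psi$.

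It then remains to identify the constant $A$ with the cohomological quantity (\ref{eq:A-cohom}) and to establish uniqueness. For the former I would reuse the computation preceding the statement: writing $\omega_0 = \theta + \frac{1}{2}C\,dd^c(\rho^2) + dd^c[\mu(\rho - R)v]$ with $\theta$ and the third term compactly supported, the cone contributions cancel upon expansion and the compactly supported exact forms integrate to zero, so $\int_X(1 - e^f)\,\omega_0^m = \int_X \theta^m$; feeding this into (\ref{eq:const-A}) yields $A = \frac{1}{(m-1)m!\,\Omega}[\omega]^{\cup m}$, whence $A$ depends only on the class $[\omega] \in H_c^2(\hat{X},\R)$. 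Uniqueness is immediate from Theorem~\ref{thm:unique}: any other Ricci-flat $\omega'$ with $[\omega'] = [\omega]$ and $|\omega' - \omega| \in C^{0,\alpha}_\gamma$, $\gamma < -m$, differs from $\omega$ by $dd^c u$ with $u \in C^{2,\alpha}_{\gamma+2}$ via Proposition~\ref{prop:dd-bar}, and the uniqueness clause of Theorem~\ref{thm:Calabi} forces $u = 0$.

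The only point requiring explicit care, and hence the nearest thing to an obstacle, is verifying that the push-forward by $\pi$ commutes with $dd^c$ across the transition region supporting the cutoff $\mu$, and that $\rho = \pi^* r$ can be arranged on the end while keeping $dd^c(\rho^2)\geq 0$ so that $\omega_0$ remains positive. Both are guaranteed by the setup in Proposition~\ref{prop:Kahler-form}, but I would state them explicitly so that the conical model term $\frac{1}{2}C\,dd^c(r^2)$ and the identification of the $r^{2-2m}$ coefficient with $A$ are unambiguous.
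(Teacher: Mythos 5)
Your proposal is correct and follows essentially the same route as the paper: the author likewise builds $\omega_0$ from Proposition~\ref{prop:Kahler-form}, observes that the Ricci potential $f=\log\bigl(c\,\Omega\wedge\ol{\Omega}/\omega_0^m\bigr)$ with $c=i^{m^2}\frac{m!}{(2C)^m}$ vanishes off a compact set, applies Theorem~\ref{thm:Calabi}(ii) to obtain $\phi=A\rho^{2-2m}+\psi$, identifies $A$ with $\frac{1}{(m-1)m!\,\Omega}[\omega]^{\cup m}$ by the same cancellation of the cone terms, and gets uniqueness from Proposition~\ref{prop:dd-bar} together with the uniqueness clause of Theorem~\ref{thm:Calabi}. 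No substantive difference.
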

\begin{remark}
There remains the question of the optimal $\gamma <2-2m$ giving the decay of $\psi$ in Theorem~\ref{thm:R-f-potent}.
This comes down to finding the largest $\epsilon>0$ in Theorem~\ref{thm:lap-weight}.  In general, we only know $\epsilon>0$.
But in Theorem~\ref{thm:R-f-potent} the $\AC$ K\"{a}hler manifold $(\hat{X}, g_0,\omega_0)$ has ``boundary'' $S$ which is Einstein.
The condition for the Laplacian
\begin{equation}\label{eq:Lp-Fred}
\Delta: L^p_{k+2, \delta}(X)\rightarrow L^p_{k,\delta-2}(X)
\end{equation}
to be Fredholm is well known~\cite{LocMcO}.  There is a family of operators on $S$
\[ I(\Delta,\lambda) =\lambda^2 -(2m-2)\sqrt{-1}\lambda +\Delta_S,\]
for $\lambda\in\C$ where $\Delta_S$ is the Laplacian on $S$.  Then $\Spec(I,\lambda)$ is the set of $\lambda$ for which
\[ I(\Delta,\lambda):L^p_{k+2}(S)\rightarrow L^p_k(S)\]
does not admit a bounded inverse.  In our case
\[ \Spec(I,\lambda)=\{0,\ (2m-2)\sqrt{-1},\ \mu_j^+ \sqrt{-1},\ \mu_j^- \sqrt{-1},\ldots| j=1,2,\ldots\} \]
where $\mu^{\pm}_j \sqrt{-1}$ are the two solutions of $x^2-(2m-2)\sqrt{-1}x+\lambda_j =0$ with $\lambda_j$ the j-th eigenvalue
of $\Delta_S$.  It was shown in~\cite{LocMcO} that if $\im \Spec(I,\lambda)$ denote the imaginary components, then
(\ref{eq:Lp-Fred}) is Fredholm for $-\delta\notin\im \Spec(I,\lambda)$.

If $(S,g_S)$ is Sasaki-Einstein then by Lichnerowicz's Theorem $\lambda_1 \geq 2m-1$ with equality only if $S$ is isometric to
the sphere.  And one can check that $\mu_1^+ \geq 2m-1$ with equality only if $S$ is isometric to a sphere.  And for
$\delta\in (-\mu_1^+ ,2-2m)$ the operator (\ref{eq:Lp-Fred}) is Fredholm with index $-1$.  So in Theorem~\ref{thm:R-f-potent}
one will actually have $\psi\in C^\infty_{1-2m}(X)$. \footnote{Ryushi Goto pointed this out to me.}
\end{remark}

\section{Examples}

We consider some examples of asymptotically conical Ricci-flat K\"{a}hler manifolds given by Theorem~\ref{thm:main}.
These examples are $\AC(2n,\infty)$ Ricci-flat examples, and are either resolutions of toric K\"{a}hler cones or resolutions of hypersurface singularities.  See~\cite{vC3} for many more examples on resolutions of K\"{a}hler cones.  Here we just give enough
examples to give the reader an idea of the scope of examples.

Also in~\cite{vC2} examples are constructed on affine varieties which are of type $\AC(2n,k)$ for large $k>0$.

\subsection{Resolutions of hypersurface singularities}

We describe how examples can be constructed from resolutions of weighted homogeneous hypersurface singularities.
Let $\mathbf{w}=(w_0 ,\ldots, w_m)\in (\Z_+)^{m+1}$ with $\gcd(w_0,\ldots,w_m)=1$.  We have the weighted $\C^*$-action on
$\C^{m+1}$ given by $(z_0,\ldots, z_m)\rightarrow (\lambda^{w_0}z_0,\ldots,\lambda^{w_m}z_m)$ for $\lambda\in\C^*$.  A polynomial
$f\in\C[z_0,\ldots,z_m]$ is \emph{weighted homogeneous} of degree $d\in\Z_+$ if
\begin{equation}
f(\lambda^{w_0}z_0,\ldots,\lambda^{w_m}z_m)=\lambda^d f(z_0,\ldots,z_m).
\end{equation}

There is a \emph{weighted Sasaki structure} on the sphere $S^{2m+1}_{\mathbf{w}}$ for which the Reeb vector field $\xi_{\mathbf{w}}$
generates the $S^1$-action induced by the above weighted action.  See~\cite{vC4} for details.  The cone $C(S^{2m+1}_{\mathbf{w}})$
is biholomorphic to $\C^{m+1} \setminus\{o\}$, but with a much different metric.  If $f$ is a weighted homogeneous polynomial,
then the K\"{a}hler cone structure of $C(S^{2m+1}_{\mathbf{w}})$ restricts to
$X_f =\{z\in\C^{m+1} : f(z)=0\}$.  And similarly the Sasaki structure on $S^{2m+1}_{\mathbf{w}}$ restricts to the link
$S_f := X_f \cap S^{2m+1}$.  This is given in the diagram:

\begin{equation}
\begin{array}{ccc}
X_f    & \hookrightarrow & C(S^{2m+1}_{\mathbf{w}}) \\
\cup &                 & \cup \\
S_f    & \hookrightarrow & S^{2m+1}_{\mathbf{w}} \\
\downarrow &     &  \downarrow \\
Z_f    & \hookrightarrow & \C P(\mathbf{w})
\end{array}
\end{equation}

Here $Z_f$ is a hypersurface in the weighted projective space $\C P(\mathbf{w})$.
\begin{prop}\label{prop:hyp-CY}
The K\"{a}hler cone $C(S_f)$ admits a nowhere vanishing holomorphic $m$-form $\Omega$ satisfying
$\mathcal{L}_{\xi} \Omega =im\Omega$, after possibly rescaling the Reeb vector field $\xi$, if and only if
$d< |\mathbf{w}|=\sum_{j=0}^m w_j$.
\end{prop}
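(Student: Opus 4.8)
The plan is to exhibit an explicit candidate for $\Omega$ via the Poincar\'{e} residue, compute its weight under the weighted $\C^*$-action (this is the crux), and then argue that no other nowhere vanishing holomorphic $m$-form is available, so that the sign of that weight governs existence. Throughout I assume, as is implicit in $S_f$ being a smooth Sasaki manifold, that $o$ is the only singular point of $X_f$, and I work in the nontrivial range $m\geq 2$.

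First I would construct the form. Write $\psi_\lambda(z)=(\lambda^{w_0}z_0,\ldots,\lambda^{w_m}z_m)$ for the weighted action, with holomorphic generator $V=\sum_{j=0}^m w_j z_j\partial_{z_j}$, real Euler field $r\partial_r$, and Reeb field $\xi=J(r\partial_r)$. On $\C^{m+1}$ the meromorphic top form $dz_0\wedge\cdots\wedge dz_m/f$ has a first-order pole along $X_f$, and its residue
\[ \Omega_0=\operatorname{Res}_{X_f}\frac{dz_0\wedge\cdots\wedge dz_m}{f} \]
is a holomorphic $m$-form on the smooth locus $C(S_f)=X_f\setminus\{o\}$. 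Since the singularity is isolated, the partials $\partial f/\partial z_j$ have no common zero off $o$, so $\Omega_0$ is nowhere vanishing and trivializes $\mathbf{K}_{C(S_f)}$ by adjunction (as $X_f$ is a principal divisor in $\C^{m+1}$).

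The key step is the weight computation. Because $\psi_\lambda^*(dz_j)=\lambda^{w_j}dz_j$ and $\psi_\lambda^* f=\lambda^d f$, I obtain $\psi_\lambda^*\bigl(dz_0\wedge\cdots\wedge dz_m/f\bigr)=\lambda^{|\mathbf{w}|-d}\bigl(dz_0\wedge\cdots\wedge dz_m/f\bigr)$, and equivariance of the residue along the invariant hypersurface $X_f$ gives $\psi_\lambda^*\Omega_0=\lambda^{|\mathbf{w}|-d}\Omega_0$. Differentiating the flow $\theta\mapsto\psi_{e^{i\theta}}$ of $\xi$ at $\theta=0$, and using that the Lie derivative of the $(m,0)$-form $\Omega_0$ along the antiholomorphic part of $V$ vanishes (it is $d$-closed and its contraction with a $(0,1)$-field is zero), yields
\[ \mathcal{L}_\xi\Omega_0=i(|\mathbf{w}|-d)\,\Omega_0. \]
I expect this identity, together with the careful tracking of the factor $i$ arising from $\xi=J(r\partial_r)$ and of the sign of $|\mathbf{w}|-d$, to be the delicate point; the rest is formal. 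Rescaling the Reeb field $\xi\mapsto a\xi$ by a transverse homothety ($a>0$) multiplies the eigenvalue by $a$, so taking $a=m/(|\mathbf{w}|-d)$ produces $\mathcal{L}_\xi\Omega_0=im\Omega_0$ precisely when $|\mathbf{w}|-d>0$, giving the ``if'' direction.

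For the converse I would use the triviality of $\mathbf{K}_{C(S_f)}$: every nowhere vanishing holomorphic $m$-form is $g\Omega_0$ with $g$ nowhere vanishing and holomorphic on $C(S_f)$, and by Proposition~\ref{prop:rat-sing} the point $o$ is a rational, hence normal, singularity, so $g$ extends across $o$. From $\mathcal{L}_\xi(g\Omega_0)=(\xi g)\Omega_0+g\,\mathcal{L}_\xi\Omega_0$, the eigen-equation $\mathcal{L}_\xi(g\Omega_0)=i\mu(g\Omega_0)$ forces $g$ to be a weighted-homogeneous eigenfunction, say of degree $k\geq 0$, whence $\mu=(|\mathbf{w}|-d)+k$. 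A holomorphic homogeneous function of positive weighted degree is a polynomial whose zero locus on the $m$-dimensional variety $X_f$ ($m\geq 2$) is a positive-dimensional invariant cone, so it must vanish somewhere on $C(S_f)$; this rules out $k>0$. Hence $k=0$, $g$ is constant, $\mu=|\mathbf{w}|-d$, and since $\mu>0$ is required we conclude $d<|\mathbf{w}|$, completing the proof.
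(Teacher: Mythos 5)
Your argument is correct. For comparison: the paper offers no proof of this proposition at all --- it is stated and followed only by the remark that the inequality $d<|\mathbf{w}|$ is the condition that the orbifold canonical bundle $\mathbf{K}_{Z_f}$ be negative, with the weighted Sasaki background deferred to~\cite{vC4} --- so there is nothing to match step by step, but your route is the standard direct one and is precisely the ``upstairs'' version of that remark: the residue form $\Omega_0$ descends to a section of an orbifold line bundle on $Z_f\subset\C P(\mathbf{w})$, and the weight $|\mathbf{w}|-d$ you compute is the degree of $(\mathbf{K}^{\text{orb}}_{Z_f})^{-1}=\mathcal{O}(|\mathbf{w}|-d)|_{Z_f}$ given by orbifold adjunction. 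The individual steps all check out: equivariance of the Poincar\'e residue gives $\psi_\lambda^*\Omega_0=\lambda^{|\mathbf{w}|-d}\Omega_0$, and since $\xi^{1,0}=i\sum_j w_jz_j\partial_{z_j}$ this yields $\mathcal{L}_\xi\Omega_0=i(|\mathbf{w}|-d)\Omega_0$; the restriction to rescalings $a>0$ is exactly what makes the sign of $|\mathbf{w}|-d$ decisive; and in the converse direction the exponent $k$ of $g$ is forced to be a non-negative integer because $\xi$ generates a genuine $S^1$-action and $g$ extends holomorphically over $o$, after which the zero set of a homogeneous $g$ of degree $k>0$ must meet $C(S_f)$ because $\dim_{\C}X_f=m\geq 2$. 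One small improvement: you should not invoke Proposition~\ref{prop:rat-sing} for normality of $o\in X_f$, since its hypothesis is essentially the conclusion you are in the middle of characterizing; normality is automatic here (a hypersurface is Cohen--Macaulay, and the singular locus has codimension $\geq 2$ when $m\geq 2$), which keeps the converse direction self-contained.
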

This is precisely the condition that the \emph{orbifold} canonical bundle $\mathbf{K}_{Z_f}$ on $Z_f$ is negative.

With Proposition~\ref{prop:hyp-CY} satisfied, we are interested in transversally deforming the Sasaki structure of $S_f$ to
a Sasaki-Einstein structure.  If $\eta$ is the contact structure of $S_f$ then $\omega^T =\frac{1}{2}d\eta$ is the K\"{a}hler
structure transversal to the foliation generated by the Reeb field $\xi$.  A \emph{transversal deformation} is a new Sasaki structure
with transversal K\"{a}hler form
\begin{equation}
(\omega^T)' =\omega^T +dd^c \phi,
\end{equation}
for some basic $\phi\in C^\infty_B (S)$.  The new contact form is $\eta' =\eta +2d^c \phi$.  And one can show that the K\"{a}hler
structure on the cone becomes $\omega' =\frac{1}{2}dd^c (r')^2$ where $r'=e^\phi r$.  Obtaining a Sasaki-Einstein structure
is equivalent to solving the transversal K\"{a}hler-Einstein condition
\begin{equation}\label{eq:trans-KE}
\Ric((\omega^T)') =2m(\omega^T)'.
\end{equation}

Condition (\ref{eq:CY-cond}) implies that
\begin{equation}
\Ric(\omega^T)-2m\omega^T =dd^c h.
\end{equation}
And solving (\ref{eq:trans-KE}) is equivalent to solving the transversal Monge-Amp\`{e}re equation
\begin{equation}\label{eq:trans-MA}
(\omega^T +dd^c\phi)^{m} =e^{-2m\phi +h}(\omega^T)^{m}.
\end{equation}

See~\cite{BGJ, BG2,BG3} for more on solving (\ref{eq:trans-MA}) to find Sasaki-Einstein metrics.  In particular, if
$f$ is a Brieskorn-Pham polynomial, $f=\sum_{j=0}^m z_j^{a_j}$.   Then \cite[Theorem 34]{BGJ} gives simple numerical conditions
on the $a_j$ for (\ref{eq:trans-MA}) to be solvable.

For example, consider
\begin{equation}
f= z_0^m +z_1^m +\cdots+ z_{m-1}^m +z_m^k.
\end{equation}
Then these conditions are satisfied if $k>m(m-1)$, and $X_k :=\{z\in\C: f(z)=0\}\subset\C^{m+1}$ has a Ricci-flat K\"{a}hler cone structure.

Let $\hat{\C^{m+1}}$ be the blow-up of $o\in\C^{m+1}$.  If $X' \subset\hat{\C^{m+1}}$ is the birational transform and
$E=X' \cap\cps^m \subset\hat{\C^{m+1}}$ is the exceptional divisor, then adjunction gives
\[ \mathbf{K}_{X'} =\pi^* \mathbf{K}_X + (m-\deg f) E.\]
Thus $\pi: X' \rightarrow X_k$ is crepant for $k\geq m$.  It is not difficult to see that $X'$ has one singularity isomorphic to
$X_{k-m}$.  If $k = 0$ or $1 \mod m$, then by repeatedly blowing up $\lfloor\frac{k}{m}\rfloor$ times we get a smooth
crepant resolution $\pi: \hat{X}_k \rightarrow X_k$.  Therefore, if $k>n(n-1)$ and $k = 0$ or $1 \mod m$, then Theorem~\ref{thm:main}
gives a $\lfloor\frac{k}{m}\rfloor$ family of Ricci-flat K\"{a}hler metrics on $\hat{X}_k$ converging to the cone
metric as in (\ref{eq:conv-cpt}).

\subsection{Toric examples}

One easy way to construct examples of Ricci-flat metrics on resolutions is to consider resolutions of toric K\"{a}hler cones.

\begin{defn}
 A K\"{a}hler cone $(C(S),\ol{g}),\ \dim_{\C} C(S) =m$, is toric if it admits an effective isometric action of the torus $T=T^{m}$ which preserves the Euler vector field $r\partial_r$.
\end{defn}
The associated Sasaki manifold $(S,g)$ is said to be toric.  Let $\mathfrak{t}$ be the Lie algebra of $T^m$.  It follows
that the Reeb vector field $\xi\in\mathfrak{t}$.

Since $T^m$ preserves the K\"{a}hler form $\omega=\frac{1}{2}d(r^2 \eta)$ and further preserves $r^2 \eta$, there is a
\emph{moment map}
\begin{equation}\label{eq:moment-map}
\begin{gathered}
 \mu: C(S) \longrightarrow \mathfrak{t}^* \\
 \langle \mu(x),X\rangle = \frac{1}{2}r^2\eta(X_S (x)),
\end{gathered}
\end{equation}
where $X_S$ denotes the vector field on $C(S)$ induced by $X\in\mathfrak{t}$.  We have the
moment cone defined by
\begin{equation}
 \mathcal{C} :=\mu(C(S)) \cup \{0\},
\end{equation}
which from~\cite{Ler} is a strictly convex rational polyhedral cone.  Recall that this means that there are vectors
$u_i,i=1,\ldots,d$ in the integral lattice $\Z_T =\ker\{\exp(2\pi i\cdot):\mathfrak{t}\rightarrow T\}$ such that
\begin{equation}\label{eq:moment-cone}
 \mathcal{C} =\bigcap_{j=1}^{d} \{y\in\mathfrak{t}^* : \langle u_j,y\rangle\geq 0\}.
\end{equation}

The elements $u_j \in\Z_T,\ j=1,\ldots,d,$ span a cone $\mathcal{C}^*$ in $\mathfrak{t}$ dual to $\mathcal{C}$.
Then $\mathcal{C}^*$ and all of its faces define a fan $\Delta$ characterizing $C(S)\cup\{o\}$ as an algebraic toric variety.
(cf.~\cite{Od})  There is a nowhere vanishing holomorphic m-form satisfying Proposition~\ref{prop:CY-cond} precisely when
there is a $\gamma\in \Hom_{\Z}(\Z_T, \Z)$ with $\gamma(u_j) =1,\ j=1,\ldots,d.$   This is the condition that $C(S)\cup\{o\}$
is Gorenstein.

The result of A. Futaki, H. Ono, and G. Wang on the existence of Sasaki-Einstein metrics on toric Sasaki manifolds makes toric
geometry a propitious source of examples.
\begin{thm}[\cite{FOW,CFO}]\label{thm:FOW}
Let $C(S)\cup\{o\}$ be a Gorenstein toric K\"{a}hler cone with toric Sasaki manifold $S$.
Then we can deform the Sasaki structure by varying the Reeb vector field and then performing
a transverse K\"{a}hler deformation to a Sasaki-Einstein metric.  The Reeb vector field and transverse
K\"{a}hler deformation are unique up to isomorphism.
\end{thm}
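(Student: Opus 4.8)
The plan is to decouple the problem into a finite-dimensional optimization that fixes the Reeb vector field and an infinite-dimensional PDE problem that produces the transverse K\"ahler-Einstein metric. First I would record the analytic reformulation: by the warped product structure (Definition~\ref{defn:Sasaki}) and the discussion around (\ref{eq:trans-KE}), producing a Sasaki-Einstein metric with a prescribed Reeb field $\xi$ is equivalent to solving the transverse Monge-Amp\`ere equation (\ref{eq:trans-MA}), whose basic potential $h$ is furnished by the Gorenstein condition (\ref{eq:CY-cond}). The only obstruction to solvability is the vanishing of the \emph{transverse Futaki invariant} of the transverse K\"ahler class, so the theorem splits into choosing $\xi$ to kill this invariant and then solving (\ref{eq:trans-MA}).

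For the choice of $\xi$ I would follow the volume minimization of Martelli-Sparks-Yau. Since $\xi\in\mathfrak{t}$, it may be varied over the interior of the dual cone $\mathcal{C}^*$, and the Gorenstein functional $\gamma\in\Hom_{\Z}(\Z_T,\Z)$ together with the normalization $\mathcal{L}_\xi\Omega=im\Omega$ confines $\xi$ to the affine slice $\{\gamma(\xi)=m\}$. On this slice I would write the Einstein-Hilbert (volume) functional as an integral over the moment cone (\ref{eq:moment-cone}), show it is strictly convex and proper, and deduce that it attains a unique interior minimum. The critical point equations are precisely the vanishing of the transverse Futaki invariant, which both selects $\xi$ and gives its uniqueness.

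With $\xi$ fixed, the transverse K\"ahler structure $\omega^T$ is toric over the moment polytope, so I would reduce (\ref{eq:trans-MA}) to a real Monge-Amp\`ere equation on that polytope, in the spirit of the toric Fano case. Existence I would obtain by the continuity method: openness is the invertibility of the transverse linearization, while closedness rests on \emph{a priori} estimates. The hard part, as always, is the $C^0$ estimate; here it is controlled by convex analysis on the moment polytope, where the barycenter (balancing) condition equivalent to the vanishing of the transverse Futaki invariant is exactly what makes the estimate close. The remaining second-order and Evans-Krylov-Schauder estimates are transverse analogues of the classical estimates of Aubin and Yau, and they carry over as in the compact case.

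Finally, uniqueness of the Reeb field follows from the strict convexity of the volume functional, and uniqueness of the transverse K\"ahler-Einstein metric for that Reeb field, up to a transverse automorphism, follows from a transverse version of the Bando-Mabuchi argument. This completes the plan.
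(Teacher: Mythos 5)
The paper does not prove this theorem: it is imported verbatim from Futaki--Ono--Wang and Cho--Futaki--Ono, so there is no internal argument to compare yours against. Your outline is, however, an accurate summary of how those references proceed: volume minimization over the Reeb cone (restricted to the slice $\gamma(\xi)=m$ forced by the Gorenstein normalization $\mathcal{L}_\xi\Omega=im\Omega$) selects the unique Reeb field at which the transverse Futaki invariant vanishes, and the transverse Monge--Amp\`ere equation is then solved by a continuity method whose $C^0$ estimate closes via Wang--Zhu-type convex analysis on the moment polytope; uniqueness comes from strict convexity of the volume functional and a transverse Bando--Mabuchi argument as in \cite{CFO}. One caution: your blanket assertion that ``the only obstruction to solvability is the vanishing of the transverse Futaki invariant'' is false for general transverse K\"ahler--Einstein problems (vanishing Futaki invariant does not suffice even for Fano manifolds); it is a theorem, not a triviality, that in the toric transverse setting the estimate closes exactly under the barycenter condition --- indeed \cite{FOW} phrase this by first producing a transverse K\"ahler--Ricci soliton unconditionally and then observing that the soliton field vanishes at the volume minimizer. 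With that caveat your plan is the standard one.
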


Define $H_\gamma =\{\gamma =1\}\subset\mathfrak{t}\cong\R^m$.  The intersection $P_{\Delta} =H_\gamma \cap\mathcal{C}^*$ is
an integral polytope in $H_\gamma \cong\R^{m-1}$.
A toric crepant resolution
\begin{equation}\label{eq:toric-resol}
\pi: X_{\tilde{\Delta}}\rightarrow X_{\Delta}
\end{equation}
is given by a nonsingular subdivision
$\tilde{\Delta}$ of $\Delta$ with every 1-dimensional cone $\tau_i \in\tilde{\Delta}(1), i=1,\ldots,N$ generated
by a primitive vector $u_i :=\tau_i \cap H_\gamma$.  This is equivalent to a basic, lattice triangulation of
$P_{\Delta}$.  \emph{Lattice} means that the vertices of every simplex are lattice points, and \emph{basic}
means that the vertices of every top dimensional simplex generates a basis of $\Z^{n-1}$.
Note that a \emph{maximal} triangulation of $P_{\Delta}$, meaning that the vertices of every simplex are its only
lattice points, always exists.  Every basic lattice triangulation is maximal, but the converse only holds in
dimension 2.

We want K\"{a}hler structures on the resolution $X_{\tilde{\Delta}}$.  This is given by a strictly convex support
function $h\in\SF(\tilde{\Delta},\R)$ on $\tilde{\Delta}$.  This is a real valued function which is piecewise linear on
the cones of $\tilde{\Delta}$.  Convexity means that $h(x+y)\geq h(x)+h(y)$ for $x,y\in|\tilde{\Delta}|$, the support of
$\tilde{\Delta}$.  Let $l_\sigma$ define $h$ on the m-cone $\sigma$.  Strict convexity means that
$\langle l_\sigma ,x\rangle\geq h(x)$, for all $x\in|\Delta|$, with equality only if $x\in\sigma$.

The following is proved by taking a torus Hamiltonian reduction of $\C^N$.  See~\cite{BurGuiLer} and also~\cite{vC3}.
\begin{prop}
For each strictly convex support function $h\in\SF(\tilde{\Delta},\R)$ there is a K\"{a}hler structure $\omega_h$ so that
$(X_{\tilde{\Delta}},\omega_h)$ is a Hamiltonian K\"{a}hler manifold and the image of the moment map is the polyhedral set
\[\mathcal{C}_h :=\bigcap_{j=1}^{N} \{y\in\mathfrak{t}^* : \langle u_j,y\rangle\geq\lambda_j\}.\]
If $h$ satisfies $h(u_j)=0$ for $j=1,\ldots,d$, then $[\omega_h ]\in H^2_c (X_{\tilde{\Delta}},\R)$.
The $u_j \in\inter P_{\Delta}, j=d+1,\ldots,N$, correspond to the prime divisors $D_j$ in $E=\pi^{-1}(o)$.
For each $j=d+1,\ldots,N$, let $c_j \in H^2_c (X_{\tilde{\Delta}},\R)$ be
the Poincar\'{e} dual of $[D_j]$ in $H_{2n-2}(X_{\tilde{\Delta}},\R)$.  Then
\[ [\omega_h] =-2\pi\sum_{j=d+1}^N \lambda_j c_j. \]
\end{prop}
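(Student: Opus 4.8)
The plan is to realize $(X_{\tilde\Delta},\omega_h)$ as a K\"{a}hler quotient of $\C^N$ by a subtorus, and to read off both the moment image and the K\"{a}hler class directly from the reduction data; this is the toric Delzant construction adapted to the unbounded polyhedron $\mathcal{C}_h$, following~\cite{BurGuiLer,vC3}.

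First I would set up the reduction. The assignment $e_j\mapsto u_j$ defines a map $\beta:\R^N\rightarrow\mathfrak{t}$, and since the $u_j$ lying in any top-dimensional cone of $\tilde\Delta$ form a $\Z$-basis (the subdivision being nonsingular), $\beta$ is surjective over $\Z$; setting $\mathfrak{k}=\ker\beta$ gives a short exact sequence
\[
0\rightarrow\mathfrak{k}\xrightarrow{\iota}\R^N\xrightarrow{\beta}\mathfrak{t}\rightarrow 0
\]
which exponentiates to $1\rightarrow K\rightarrow T^N\rightarrow T^m\rightarrow 1$. On $\C^N$ with its standard flat K\"{a}hler form, $T^N$ is Hamiltonian with moment map $\mu_{T^N}(z)=\frac12\sum_j|z_j|^2 e_j^*$, so $K$ acts with moment map $\mu_K=\iota^*\circ\mu_{T^N}$. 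On each maximal cone $\sigma$ the support function $h$ agrees with the linear form $l_\sigma$, and $l_\sigma$, viewed in $\mathfrak{t}^*$, is the vertex of $\mathcal{C}_h$ dual to $\sigma$; since $\langle u_j,l_\sigma\rangle=h(u_j)$ for $\tau_j\subset\sigma$, this forces the offset parameters to be $\lambda_j=h(u_j)$. I then take $X_{\tilde\Delta}=\mu_K^{-1}(c)/K$, where $c\in\mathfrak{k}^*$ is the image of $(\lambda_1,\dots,\lambda_N)$. Strict convexity of $h$ places this level set in the stable locus, and nonsingularity of $\tilde\Delta$ makes $K$ act freely there, so the quotient is a smooth K\"{a}hler manifold $(X_{\tilde\Delta},\omega_h)$ with a residual Hamiltonian $T^m$-action.

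Next I would identify the quotient and its moment image. By the standard description of reduced moment images, the image of the residual $T^m$-moment map is exactly
\[
\mathcal{C}_h=\bigcap_{j=1}^N\{y\in\mathfrak{t}^*:\langle u_j,y\rangle\geq\lambda_j\},
\]
and the inward normal fan of $\mathcal{C}_h$ is $\tilde\Delta$, so the reduced space is equivariantly biholomorphic to the toric variety $X_{\tilde\Delta}$ of the resolution, the preimages of the facets $\{\langle u_j,\cdot\rangle=\lambda_j\}$ being the prime toric divisors $D_j$. For $j\leq d$ the $u_j$ generate $\mathcal{C}^*$ and lie on $\partial P_\Delta$, so the $D_j$ are the non-compact boundary divisors running out into the conical end; for $d<j\leq N$ the $u_j$ lie in $\inter P_\Delta$ and the $D_j$ are the compact exceptional prime divisors constituting $E=\pi^{-1}(o)$, which is the third assertion and places their Poincar\'{e} duals $c_j$ in $H^2_c(X_{\tilde\Delta},\R)$.

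Finally I would compute the class. Imposing $h(u_j)=0$ for $j\leq d$ forces $\lambda_j=0$ there, so the facets normal to the rays of $\mathcal{C}^*$ pass through the origin; hence $\mathcal{C}_h$ coincides with the moment cone $\mathcal{C}$ outside a compact set and $\omega_h$ is the cone metric outside a compact set, whence $[\omega_h]\in H^2_c(X_{\tilde\Delta},\R)$. The class itself is read off from the reduction: $\C^N$ is equivariantly contractible, so the Kirwan map $H^*_{T^N}(\C^N)\rightarrow H^*(X_{\tilde\Delta})$ sends $e_j^*$ to the divisor dual $c_j$, and the equivariant class of $\omega_{\mathrm{std}}$ at level $c$ maps to $[\omega_h]$, giving $[\omega_h]=-2\pi\sum_{j=1}^N\lambda_j c_j$. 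Since $\lambda_j=0$ for $j\leq d$, this collapses to $[\omega_h]=-2\pi\sum_{j=d+1}^N\lambda_j c_j$; the factor $-2\pi$ is the constant relating the normalization $\frac12|z_j|^2$ in $\mu_{T^N}$ to the first Chern form of the $j$-th tautological bundle on the quotient. I expect this last, cohomological, step to be the main obstacle: one must carry the equivariant class accurately through the Kirwan map, take care to land in $H^2_c$ rather than $H^2$ by distinguishing the compact exceptional divisors from the non-compact boundary divisors, and pin down the normalizing constant $-2\pi$.
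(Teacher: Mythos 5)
Your proposal is correct and follows exactly the route the paper indicates: the paper gives no written proof, stating only that the proposition ``is proved by taking a torus Hamiltonian reduction of $\C^N$'' and citing \cite{BurGuiLer} and \cite{vC3}, and your Delzant-type reduction at the level determined by $\lambda_j=h(u_j)$, together with the Kirwan-map computation of $[\omega_h]$, is precisely that argument written out. The points you flag as delicate (the normalization $-2\pi$, and landing in $H^2_c$ by separating the noncompact divisors $D_1,\dots,D_d$ from the compact exceptional ones) are indeed the only places requiring care, and your treatment of them is consistent with the paper's later use of the formula in (\ref{eq:A-toric}).
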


If $[\omega_h] \in H^2_c (X_{\tilde{\Delta}},\R)$, then we can apply Proposition~\ref{prop:Kahler-form} to construct
a K\"{a}hler metric $\omega_0$ with Ricci potential $f=\log\left(\frac{c \Omega\wedge\ol{\Omega}}{\omega_0^m}\right)$.  If
$[\omega_h]$ is not compactly supported, then an initial metric is constructed by R. Goto~\cite[\S 5]{Got} with Ricci
potential $f\in C^\infty_{-4}(X)$.  In both cases the initial K\"{a}hler metrics $\omega_0$ and Ricci potentials $f$ can be
taken $T^m$-invariant.  We get the toric version of Theorem~\ref{thm:main}.
\begin{thm}
Let $\pi:X_{\tilde{\Delta}}\rightarrow C(S)\cup\{o\}$ be a crepant resolution of a toric K\"{a}hler cone.  Then for each
strictly convex $h\in\SF(\tilde{\Delta},R)$, there is a $T^m$-invariant Ricci-flat K\"{a}hler metric $g$ whose K\"{a}hler form
satisfies $[\omega]=[\omega_h]$.  If $[\omega_h]\in H_c^2(X_{\tilde{\Delta}},\R)$, then $g$ is unique and is asymptotic to the cone
metric as in (\ref{eq:conv-cpt}), otherwise $g$ converges as (\ref{eq:conv-noncpt}).
\end{thm}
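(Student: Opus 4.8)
The plan is to reduce the statement to Theorem~\ref{thm:main} and Theorem~\ref{thm:Calabi} by first producing a Ricci-flat cone metric and then checking that the toric resolution satisfies their hypotheses; the only genuinely new content will be the $T^m$-invariance. First I would observe that, since $\pi:X_{\tilde{\Delta}}\rightarrow X_\Delta$ is crepant, the primitive generators $u_j$ all lie on a hyperplane $H_\gamma=\{\gamma=1\}$ for some $\gamma\in\Hom_\Z(\Z_T,\Z)$, so $C(S)\cup\{o\}=X_\Delta$ is Gorenstein; this is the hypothesis of Theorem~\ref{thm:FOW}. That theorem then supplies a Sasaki-Einstein metric on $S$ after varying the Reeb field $\xi\in\mathfrak{t}$ and performing a transverse K\"{a}hler deformation, giving a Ricci-flat K\"{a}hler cone metric $\ol{g}$. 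The key remark is that deforming $\xi$ inside $\mathfrak{t}$ alters neither the fan $\Delta$ nor its subdivision $\tilde{\Delta}$, so the complex variety $X_\Delta$, the resolution $\pi$, and the K\"{a}hler classes $[\omega_h]$ are all unchanged; only the metric becomes Ricci-flat. Hence $\pi$ is a crepant resolution of a Ricci-flat K\"{a}hler cone and Theorem~\ref{thm:main} applies.

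Next I would split into the two cases. When $[\omega_h]\in H^2_c(X_{\tilde{\Delta}},\R)$ I would use Proposition~\ref{prop:Kahler-form} to replace $\omega_h$ by a cohomologous K\"{a}hler metric $\omega_0$ equal to the cone metric outside a compact set; its Ricci potential $f$ then has compact support, so $f\in C^\infty_\beta(X)$ for every $\beta$, in particular for some $\beta\in(-2m-\epsilon,-2m)$. Part (ii) of Theorem~\ref{thm:Calabi} yields $\phi\in C^\infty_{2-2m}(X)$ solving $(\omega_0+dd^c\phi)^m=e^f\omega_0^m$, and $\omega=\omega_0+dd^c\phi$ is the Ricci-flat form in the class $[\omega_h]$; the decay~(\ref{eq:conv-cpt}) and the uniqueness of $g$ in this class are exactly the compactly supported part of Theorem~\ref{thm:main}. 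When $[\omega_h]\notin H^2_c$ I would instead take Goto's initial metric~\cite[\S 5]{Got}, for which $f\in C^\infty_{-4}(X)\subset C^\infty_\beta(X)$ with a suitable $\beta\in(-2m,-2)$, and apply part (i) of Theorem~\ref{thm:Calabi}; here the leading contribution, of order $r^{-2}$, comes from $\omega_0-\ol{\omega}$, while $dd^c\phi$ decays faster, giving~(\ref{eq:conv-noncpt}).

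Finally I would deduce $T^m$-invariance from uniqueness. In both cases $\omega_0$ and $f$ may be chosen $T^m$-invariant, as recorded just before the theorem. For each $t\in T^m$ the pullback $t^*\phi$ lies in the same weighted H\"{o}lder space as $\phi$ and solves the same Monge-Amp\`{e}re equation, since $t$ acts by biholomorphic isometries fixing $\omega_0$ and $f$; the uniqueness clause of Theorem~\ref{thm:Calabi} then forces $t^*\phi=\phi$, so $\omega$, and with it the metric $g$, is $T^m$-invariant.

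I expect the main obstacle to be the compatibility step of the first paragraph: one must verify that the Reeb deformation of Theorem~\ref{thm:FOW}, which may make the Sasaki structure irregular and change the radius function, still leaves the holomorphic resolution $\pi$ and the K\"{a}hler cone structure at infinity intact, so that the $\AC$ hypotheses of Theorem~\ref{thm:main} genuinely hold for the new Ricci-flat cone. Once that is secured, existence, both convergence rates, and the uniqueness in the compactly supported case follow directly from the results already established, with no further estimates needed.
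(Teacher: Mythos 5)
Your proposal is correct and follows essentially the same route as the paper: the paper likewise relies on Theorem~\ref{thm:FOW} to obtain the Ricci-flat cone structure, uses Proposition~\ref{prop:Kahler-form} (resp.\ Goto's initial metric) to produce a $T^m$-invariant background whose Ricci potential decays fast enough for part (ii) (resp.\ part (i)) of Theorem~\ref{thm:Calabi}, and then reads off the two convergence rates from Theorem~\ref{thm:main}. The invariance-via-uniqueness argument you spell out ($t^*\phi=\phi$ by the uniqueness clause of Theorem~\ref{thm:Calabi}) is exactly what the paper leaves implicit in its remark that $\omega_0$ and $f$ can be taken $T^m$-invariant.
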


If $[\omega_h]\in H_c^2(X_{\tilde{\Delta}},\R)$, then from (\ref{eq:A-cohom}) we have
\begin{equation}\label{eq:A-toric}
\begin{split}
A = \frac{1}{(m-1)m!\Omega}[\omega]^{m} & =\frac{-2\pi}{(m-1)m!\Omega}\sum_{j=d+1}^N \lambda_j c_j [\omega]^{m-1}\\
                                        & =\frac{-2\pi}{(m-1)m!\Omega}\sum_{j=d+1}^N \lambda_j\int_{E_j} [\omega_h]^{m-1} <0.
\end{split}
\end{equation}
Note that all the quantities in (\ref{eq:A-toric}) can be computed from $h$ in terms of volumes of various polytopes.

When $\dim_{\C} X_\Delta =3$ there always exists a toric crepant resolution $X_{\tilde{\Delta}}$.  And further, if $X_\Delta$
is not the quadric cone $\{z_0^2 +z_1^2 +z_2^2 +z_3^2 =0\}\subset\C^4$, then it admits a toric crepant resolution
$X_{\tilde{\Delta}}$ with a strictly convex support function $h$ so that $[\omega_h] \in H^2_c (X_{\tilde{\Delta}},\R)$.
See~\cite{vC4} for more details.

\bibliographystyle{plain}

\begin{thebibliography}{10}

\bibitem{Aub1}
Thierry Aubin.
\newblock M\'etriques riemanniennes et courbure.
\newblock {\em J. Differential Geometry}, 4:383--424, 1970.

\bibitem{BK2}
Shigetoshi Bando and Ryoichi Kobayashi.
\newblock Ricci-flat {K}\"ahler metrics on affine algebraic manifolds. {II}.
\newblock {\em Math. Ann.}, 287(1):175--180, 1990.

\bibitem{BG2}
Charles~P. Boyer and Krzysztof Galicki.
\newblock Sasakian geometry, hypersurface singularities, and {E}instein
  metrics.
\newblock {\em Rend. Circ. Mat. Palermo (2) Suppl.}, (75):57--87, 2005.

\bibitem{BG3}
Charles~P. Boyer and Krzysztof Galicki.
\newblock {\em Sasakian geometry}.
\newblock Oxford Mathematical Monographs. Oxford University Press, Oxford,
  2008.

\bibitem{BGJ}
Charles~P. Boyer, Krzysztof Galicki, and J{\'a}nos Koll{\'a}r.
\newblock Einstein metrics on spheres.
\newblock {\em Ann. of Math. (2)}, 162(1):557--580, 2005.

\bibitem{BurGuiLer}
Dan Burns, Victor Guillemin, and Eugene Lerman.
\newblock K\"ahler metrics on singular toric varieties.
\newblock {\em Pacific J. Math.}, 238(1):27--40, 2008.

\bibitem{Ch-SCh-B}
Alice Chaljub-Simon and Yvonne Choquet-Bruhat.
\newblock Probl\`emes elliptiques du second ordre sur une vari\'et\'e
  euclidienne \`a l'infini.
\newblock {\em Ann. Fac. Sci. Toulouse Math. (5)}, 1(1):9--25, 1979.

\bibitem{CFO}
Koji Cho, Akito Futaki, and Hajime Ono.
\newblock Uniqueness and examples of compact toric {S}asaki-{E}instein metrics.
\newblock {\em Comm. Math. Phys.}, 277(2):439--458, 2008.

\bibitem{FabStro}
E.~B. Fabes and D.~W. Stroock.
\newblock A new proof of {M}oser's parabolic {H}arnack inequality using the old
  ideas of {N}ash.
\newblock {\em Arch. Rational Mech. Anal.}, 96(4):327--338, 1986.

\bibitem{FOW}
A.~Futaki, H.~Ono, and G.~Wang.
\newblock Transverse {K}\"{a}hler geometry of {S}asaki manifolds and toric
  {S}asaki-{E}instein manifolds.
\newblock {\em J. Diff. Geom.}, 83(3):585--636, 2009.

\bibitem{Got}
Ryushi Goto.
\newblock Calabi-{Y}au structures and {E}instein-{S}asakian structures on
  crepant resolutions of isolated singularities.
\newblock arXiv:math.DG/0906.5191 v.2, 2009.

\bibitem{Gri}
A.~A. Grigor{\cprime}yan.
\newblock The heat equation on noncompact {R}iemannian manifolds.
\newblock {\em Mat. Sb.}, 182(1):55--87, 1991.

\bibitem{GriSal-Cos}
Alexander Grigor'yan and Laurent Saloff-Coste.
\newblock Stability results for {H}arnack inequalities.
\newblock {\em Ann. Inst. Fourier (Grenoble)}, 55(3):825--890, 2005.

\bibitem{HauHunMaz}
Tam{\'a}s Hausel, Eugenie Hunsicker, and Rafe Mazzeo.
\newblock Hodge cohomology of gravitational instantons.
\newblock {\em Duke Math. J.}, 122(3):485--548, 2004.

\bibitem{Jer}
David Jerison.
\newblock The {P}oincar\'e inequality for vector fields satisfying
  {H}\"ormander's condition.
\newblock {\em Duke Math. J.}, 53(2):503--523, 1986.

\bibitem{Joy1}
Dominic~D. Joyce.
\newblock {\em Compact manifolds with special holonomy}.
\newblock Oxford Mathematical Monographs. Oxford University Press, Oxford,
  2000.

\bibitem{Ler}
Eugene Lerman.
\newblock Contact toric manifolds.
\newblock {\em J. Symplectic Geom.}, 1(4):785--828, 2003.

\bibitem{Loc}
Robert Lockhart.
\newblock Fredholm, {H}odge and {L}iouville theorems on noncompact manifolds.
\newblock {\em Trans. Amer. Math. Soc.}, 301(1):1--35, 1987.

\bibitem{LocMcO}
Robert~B. Lockhart and Robert~C. McOwen.
\newblock Elliptic differential operators on noncompact manifolds.
\newblock {\em Ann. Scuola Norm. Sup. Pisa Cl. Sci. (4)}, 12(3):409--447, 1985.

\bibitem{MS4}
Dario Martelli and James Sparks.
\newblock Baryonic branches and resolutions of {R}icci-flat {K}\"ahler cones.
\newblock {\em J. High Energy Phys.}, (4):067, 44, 2008.

\bibitem{MS2}
Dario Martelli and James Sparks.
\newblock Resolutions of non-regular {R}icci-flat {K}\"ahler cones.
\newblock {\em J. Geom. Phys.}, 59(8):1175--1195, 2009.

\bibitem{MS3}
Dario Martelli and James Sparks.
\newblock Symmetry-breaking vacua and baryon condensates in {A}d{S}/{CFT}
  correspondence.
\newblock {\em Phys. Rev. D}, 79(6):065009, 51, 2009.

\bibitem{Min}
Vincent Minerbe.
\newblock Weighted {S}obolev inequalities and {R}icci flat manifolds.
\newblock {\em Geom. Funct. Anal.}, 18(5):1696--1749, 2009.

\bibitem{Od}
Tadao Oda.
\newblock {\em Convex bodies and algebraic geometry}, volume~15 of {\em
  Ergebnisse der Mathematik und ihrer Grenzgebiete (3) [Results in Mathematics
  and Related Areas (3)]}.
\newblock Springer-Verlag, Berlin, 1988.
\newblock An introduction to the theory of toric varieties, Translated from the
  Japanese.

\bibitem{Sal-Cos}
L.~Saloff-Coste.
\newblock A note on {P}oincar\'e, {S}obolev, and {H}arnack inequalities.
\newblock {\em Internat. Math. Res. Notices}, (2):27--38, 1992.

\bibitem{Sal-Cos1}
Laurent Saloff-Coste.
\newblock Lectures on finite {M}arkov chains.
\newblock In {\em Lectures on probability theory and statistics
  ({S}aint-{F}lour, 1996)}, volume 1665 of {\em Lecture Notes in Math.}, pages
  301--413. Springer, Berlin, 1997.

\bibitem{Sal-Cos2}
Laurent Saloff-Coste.
\newblock {\em Aspects of {S}obolev-type inequalities}, volume 289 of {\em
  London Mathematical Society Lecture Note Series}.
\newblock Cambridge University Press, Cambridge, 2002.

\bibitem{TY2}
Gang Tian and Shing-Tung Yau.
\newblock Complete {K}\"ahler manifolds with zero {R}icci curvature. {II}.
\newblock {\em Invent. Math.}, 106(1):27--60, 1991.

\bibitem{vC2}
Craig van Coevering.
\newblock A construction of complete {R}icci-flat {K}\"{a}hler manifolds.
\newblock arXiv:math.DG/0803.0112, 2008.

\bibitem{vC4}
Craig van Coevering.
\newblock Examples of asymptotically conical {R}icci-flat k\"{a}hler manifolds.
\newblock arXiv:math.DG/0812.4745 v.2, to appear in Math. Zeitschrift, 2008.

\bibitem{vC3}
Craig van Coevering.
\newblock Ricci-flat {K}\"{a}hler metrics on crepant resolutions of
  {K}\"{a}hler cones.
\newblock arXiv:math.DG/0806.3728 v.3, to appear in Math. Annalen, 2008.

\bibitem{Yau}
Shing~Tung Yau.
\newblock On the {R}icci curvature of a compact {K}\"ahler manifold and the
  complex {M}onge-{A}mp\`ere equation. {I}.
\newblock {\em Comm. Pure Appl. Math.}, 31(3):339--411, 1978.

\end{thebibliography}
\def\cprime{$'$}

\end{document}